\newcommand{\Z}{\mathbb{Z}}
\newcommand{\R}{\mathbb{R}}
\newcommand{\N}{\mathbb{N}}
\newcommand{\C}{\mathbb{C}}
\newcommand{\Q}{\mathbb{Q}}
\newcommand{\E}{\mathbb{E}}
\newcommand{\mc}{\mathcal}
\newcommand{\mf}{\mathfrak}
\newcommand{\eps}{\varepsilon}
\newcommand{\ind}{{\bf 1}}
\renewcommand{\P}{\mathbb{P}}
\renewcommand{\H}{\mathbb{H}}
\DeclareMathOperator{\dist}{dist}
\DeclareMathOperator{\Vir}{Vir}
\DeclareMathOperator{\SLE}{SLE}
\title{SLE and Virasoro representations: Fusion}
\author{Julien Dub\'edat\footnote{Partially supported by NSF grant DMS-1005749 and the Alfred P. Sloan Foundation.}}
\newtheorem{thm}{Theorem}
\newtheorem{Thm}[thm]{Theorem}
\newtheorem{Prop}[thm]{Proposition}
\newtheorem{Lem}[thm]{Lemma}
\begin{document}
\maketitle
\begin{abstract}
We continue the study of null-vector equations in relation with partition functions of (systems of) Schramm-Loewner Evolutions (SLEs) by considering the question of fusion. Starting from $n$ commuting SLEs seeded at distinct points, the partition function satisfies $n$ null-vector equations (at level 2). We show how to obtain higher level null-vector equations by coalescing the seeds one by one. As an example, we extend Schramm's formula (for the position of a marked bulk point relatively to a chordal SLE trace) to an arbitrary number of SLE strands.

The argument combines input from representation theory - the study of Verma modules for the Virasoro algebra - with regularity estimates, themselves based on hypoellipticity and stochastic flow arguments.
\end{abstract}

\tableofcontents

\section{Introduction}

The study of Verma modules for the Virasoro algebra, initiated by Kac (e.g. \cite{Kac_Bombay}) and continued in particular in \cite{FeiFuc_Verma} found a spectacular application in the seminal work by Belavin-Polyakov-Zamolodchikov \cite{BPZ} on Conformal Field Theory. The idea is that the correlators of certain primary ``fields" of (scaling limits of) critical two-dimensional statistical mechanical models should satisfy differential equations mapped from special elements of the universal enveloping algebra of the Virasoro algebra - the {\em null} or {\em singular} - vectors, corresponding themselves to exceptional embeddings of Verma modules. These singular vectors are labelled by a pair of integers $(r,s)\in(\N^*)^2$ (for a given central charge); $h_{r,s}$ is the corresponding weight (the scaling dimension of the field, given by \eqref{eq:param}). The role of these special weights in relation with boundary conditions (or states) in BCFT was also observed early, see e.g. \cite{Cardy_fusion}.

Schramm-Loewner Evolutions, introduced by Schramm in \cite{Sch99}, describe (at least conjecturally) the dynamics of interfaces in such models. It was realized early on \cite{BauBer_growth} that the null vectors at level 2 (the simplest non-trivial ones) could be related to the generator of SLE, seen as a diffusion on the configuration space. In this case, corresponding to $(r,s)=(1,2)$ or $(2,1)$, the BPZ differential equations are second-order hypoelliptic PDEs, which are well-understood from a probabilistic point of view. This also gives a very intuitive geometric interpretation of the corresponding field as an ``$\SLE$ creation operator".

In \cite{Ca3,Ca3c,Dub_Comm}, systems of interacting SLEs are constructed and studied (see also \cite{BBK}); in particular their restriction properties \cite{LSW3} involve the weights of Verma modules with higher level degeneracies, with $(r,s)=(n+1,1)$. It is then natural to expect \cite{CarGam_fus} that correlators of ``multiple $\SLE$ creation operators" satisfy higher order (viz. $\geq 3$) BPZ equations, the probabilistic interpretation of which is much less direct.

In the present work we provide some rigorous versions of the BPZ fusion rules within the framework of Virasoro uniformization developed in \cite{Kont_Vir,FriKal,Kont_arbeit,Fri_CFTSLE} (following here the treatment of \cite{Dub_Virloc}). In that context, one considers a section of a suitable line bundle over an extended Teichm\"uller space (keeping track of two marked ``seeds" $X,Y$ on the boundary, additional markings, and formal local coordinates at $X,Y$). Two commuting representations of the Virasoro algebra, corresponding to deformations at each of the seeds, and the assumption is that the section (``partition function") satisfies a general null vector equation at $X$ and a level 2 null vector equation at $Y$. 

We study the leading term of such a partition function as $Y\rightarrow X$, and show that it satisfies itself a null vector equation with an adjacent weight; this is the main result, Theorem \ref{Thm:fus}. As a sample application (Theorem \ref{Thm:Schmult}), we consider a version of Schramm's formula \cite{Sch_percform}, viz. the probability $f_k(\theta)$ that, of $n$ commuting $\SLE$'s from $0$ to $\infty$ in the upper half-plane $\H$, exactly $k$ of the paths pass to the left of a given bulk point $e^{i\theta}$. As expected (and known in some cases, \cite{Sch_percform,CarGam_fus,BelVik_bub}), this satisfies an ODE of order $n+1$, given by the usual BPZ rules, in combination with the Benoit--Saint-Aubin formula \cite{BSA}. While this is a concrete, quantitative statement on SLE, we do not know (and do not expect the existence) of an argument without input from representation theory.

In turn, this can be combined with convergence to $\SLE$ arguments in order to complete the connection with discrete models; examples of discrete observables corresponding to various fusions are discussed in Section \ref{ssec:discrete}.

While some aspects of this work, in particular algebraic ones (Lemma \ref{Lem:algfus} and Section 7), may appear familiar - at least in spirit - to readers knowledgeable of the CFT treatment of fusion, the main contribution of the present article resides in implementing rigorously fusion rules for objects and quantities originating from SLE.

The article (which builds on material detailed in \cite{Dub_Virloc} - we refer the reader to it in particular for background material) is organized as follows. In Section 2, we study a simple example with elementary arguments (and regularity assumptions). Section 3 presents, after some background material, the main algebraic step of the argument. Section 4 justifies the expansion of correlators near the singularity by analytic and probabilistic arguments. The synthesis is operated in Section 5, which contains the main result (Theorem \ref{Thm:fus}). As an application, we justify in Section 6 the extension of Schramm's formula to multiple $\SLE$s. Finally, in Section 7 we show how to obtain BPZ differential equations of order $(r+1)(s+1)$ (in $n+1$ variables) from fusing variables in $r+s$ equations of order 2 (in $n+rs$ variables).

\section{Pairs of SLEs}

We begin with an elementary (and somewhat informal) discussion to the fusion problem in the simplest case, viz. two commuting SLEs in a simply-connected domain aiming at the same target point.

Specifically, $x,y,z_1,\dots,z_n$ are marked points on the real line. We consider two commuting $\SLE_\kappa(\rho)$, $\rho=2$ in the upper half-plane $\H$ , starting from $(x,y)$ and $(y,x)$ respectively and aiming at infinity (see \cite{Dub_Comm} for a discussion of such multiple SLEs, also \cite{BBK,CarGam_fus}). We consider ``martingale observables" of the system of SLEs, eg: the probability that the rightmost SLE hits the interval $(z_1,z_2)$ (say $x<y<z_1<z_2$, $\kappa>4$). These satisfy two second-order PDE's in $n+2$ variables (two seeds $x,y$ and $n$ spectator points $z_1,\dots,z_n$). The goal is to find one third-order PDE in $(n+1)$ variables satisfied by the observable when $x=y$.

\paragraph{First approach.}
Consider the $\SLE_\kappa(\rho)$ started from $(x,y)$, $\rho=2$; $(g_t)$ denotes the associated flow, $(\gamma_t)$ its trace, $X_t=g_t(\gamma_t)$, $Y_t=g_t(y)$. We have the dynamics:
\begin{align*}
dX_t&=\sqrt{\kappa}dB_t+\frac{2dt}{X_t-Y_t}\\
 dY_t&=\frac{2}{Y_t-X_t}dt
 \end{align*}
Set $U=(X+Y)/2$, $V=(Y-X)/2$ (so that $X=U-V$, $Y=U+V$). Then:
\begin{align*}
 dU_t&=\frac{\sqrt{\kappa}}2dB_t\\
dV_t&=-\frac{\sqrt{\kappa}}2dB_t+\frac{dt}{V_t}
\end{align*}
Thus a martingale observable $f(u,v,z_i)$ satisfies ${\mc L}f=0$ where:
$${\mc L}=\frac{\kappa}{8}(\partial_u-\partial_v)^2+\frac{1}{v}\partial_v+\sum\frac{2}{z_i-u+v}\partial_i$$
(here $\partial_i=\frac{\partial}{\partial z_i}$). 

When considering the second SLE, the roles of $x$ and $y$ are exchanged, which corresponds to replacing $v$ with $-v$. Hence an observable also satisfies $\hat{\mc L}f=0$, where:
$$\hat{\mc L}=\frac{\kappa}{8}(\partial_u+\partial_v)^2+\frac{1}{v}\partial_v+\sum\frac{2}{z_i-u-v}\partial_i$$
Remark that, with this change of variable, the commutation relation \cite{Dub_Comm} reads:
$$[{\mc L},\hat{\mc L}]=\frac{1}{v^2}(\hat{\mc L}-{\mc L})$$
It is more symmetric to consider:
\begin{align*}
 {\mc M}_e=\hat{\mc L}+{\mc L}&=\frac{\kappa}4(\partial_{uu}+\partial_{vv})+\frac{2}{v}\partial_v+4\sum\frac{z_i-u}{(z_i-u)^2-v^2}\partial_i\\
{\mc M}_o=\hat{\mc L}-{\mc L}&=\frac\kappa 2\partial_{uv}+4\sum\frac{v}{(z_i-u)^2-v^2}\partial_i
\end{align*}
Expanding around $x=y$, we are now looking for a solution under the form:
$$f(u,v,z_i)=v^\alpha\sum_{n\geq 0}v^nf_n(u,z_i).$$
(The validity of such expansion will be justified in a much more general set-up). The {\em indicial equation} reads:
$$\frac\kappa 2\alpha(\alpha-1)+4\alpha=0$$
i.e. $\alpha\in\{0,1-8/\kappa\}$; it is obtained by examining the coefficient of $v^{\alpha-2}$ in ${\mc L}f$. We consider the case $\alpha=0$ (the other case corresponds to a chordal SLE from $x$ to $y$). Since ${\mc M}_e$ and ${\mc M}_o$ are respectively even and odd under $v\leftrightarrow -v$, we may actually look for a solution of type:
$$f(u,v,z_i)=\sum_{n\geq 0}v^{2n}f_{2n}(u,z_i)$$
The equation ${\mc M}_ef=0$ yields an infinite system of relations on the $f_{2n}$'s (separating by degree in $v$). In particular, in degree 0 we get:
$$\frac{\kappa}4\left(\partial_{uu}f_0+2f_2\right)+4f_2+\sum_i\frac{4}{z_i-u}\partial_if_0=0$$
Similarly, considering the relation ${\mc M}_of=0$ in degree 1, we get:
$$\kappa\partial_uf_2+\sum_i\frac{4}{(z_i-u)^2}\partial_if_0=0$$
We now have two relations between $f_0$ and $f_2$:
\begin{align*}
 -(4+\frac\kappa 2)f_2&=\frac{\kappa}4\partial_{uu}f_0+\sum_i\frac{4}{z_i-u}\partial_if_0\\
-\kappa\partial_uf_2&=\sum_i\frac{4}{(z_i-u)^2}\partial_if_0
\end{align*}
It is now trivial to eliminate $f_2$:
$$\partial_u\left(\frac\kappa 4\partial_{uu}+\sum_i\frac{4}{z_i-u}\partial_i\right)f_0=\frac 4\kappa\sum_i\frac{4+\kappa/2}{(z_i-u)^2}\partial_i f_0$$
We are now looking for translation invariant solutions:
$$f_0(u,z_i)=g_0(z_i-u)$$
where $s_i=z_i-u$. We get $\partial_uf_0=\ell_{-1}g_0$ and $\sum_i (z_i-u)^{n+1}\partial_if_0=-\ell_{n}g_0$, having set $\ell_n=-\sum_i s_i^{n+1}\frac{\partial}{\partial s_i}$. Thus:
$${\mc D}=(\ell_{-1})^3-\frac{16}{\kappa}\ell_{-1}\ell_{-2}+\frac{16}{\kappa^2}(4+\kappa/2)\ell_{-3}$$
satisfies ${\mc D}g_0=0$. Set $\tau=4/\kappa$. Observe that:
$$[\ell_{m},\ell_n]=(m-n)\ell_{m+n}$$
Then:
$${\mc D}=(\ell_{-1})^3-4\tau\ell_{-1}\ell_{-2}+(2\tau+4\tau^2)\ell_{-3}=(\ell_{-1})^3-2\tau(\ell_{-1}\ell_{-2}+\ell_{-2}\ell_{-1})+4\tau^2\ell_{-3}$$
which, as expected, corresponds to the singular vector $\Delta_{3,1}$ (see Section \ref{ssec:singularvec}).\\

\paragraph{Second approach.}
The previous argument exploits the symmetry $x\leftrightarrow y$; let us present another (a bit more formal) argument, which does not rely on this symmetry and is thus easier to generalize.

As before we mark points $x,y,z_i$; we now allow $y,z_1,\dots,z_n$ to have weights $h_y,h_1,\dots,h_n$. We let $v=y-x$, $s_i=z_i-x$ and
\begin{align*}
\ell_n&=-\sum_i s_i^{n+1}\partial_{s_i}-(n+1)h_is_i^n\\
\hat\ell_n&=\ell_n-v^{n+1}\partial_v-(n+1)h_yv^n
\end{align*}
where $\ell_n$ does not account for the marked point $y$. For a perturbation at $y$ (with weight $h_x$ at $x$) we have:
\begin{align*}
\tilde\ell_n&=(-v)^{n+1}(\partial_v-\ell_{-1})-(n+1)h_x(-v)^n
-\left(\sum_i (s_i-v)^{n+1}\partial_{s_i}+(n+1)h_i(s_i-v)^n\right)\\
&=(-v)^{n+1}(\partial_v-\ell_{-1})-(n+1)h_x(-v)^n+\sum_{k\geq 0}(-v)^k{{n+1}\choose{k}}\ell_{n-k}
\end{align*}
(For $n+1<0$, the binomial coefficients are defined by $(1+v)^{n+1}=\sum_{k\geq 0}v^k{{n+1}\choose{k}}$ for $v$ small). 
In particular:
\begin{align*}
\tilde\ell_{-1}&=\partial_v\\
\tilde\ell_{-2}&=-v^{-1}\partial_v+v^{-1}\ell_{-1}+\frac{h_x}{v^2}+\sum_kv^k\ell_{-2-k}
\end{align*}
Consider:
\begin{align*}
{\mc L}&=(\hat\ell_{-1})^2-\tau\hat\ell_{-2}=(\ell_{-1}-\partial_v)^2-\tau(\ell_{-2}-v^{-1}\partial_v+hv^{-2})\\
&=(\partial_{vv}+\tau(v^{-1}\partial_v-h_yv^{-2}))-2\partial_v\ell_{-1}+(\ell_{-1}^2-\tau\ell_{-2})\\
{\mc M}&=(\tilde\ell_{-1})^2-\tau\tilde\ell_{-2}\\
&=(\partial_{vv}+\tau(v^{-1}\partial_v-h_xv^{-2}))-\tau(v^{-1}\ell_{-1}+\ell_{-2}+v\ell_{-3}+\cdots)
\end{align*}
We are looking for $f=v^\alpha\sum_{k\geq 0}v^kf_k$ such that ${\mc L}f={\mc M}f=0$. 

Set $R_.=(\partial_{vv}+\tau(v^{-1}\partial_v-h_.v^{-2}))$ and 
$$r_.(\beta)=v^{2-\beta}R_.v^\beta=\beta(\beta-1)+\tau\beta-\tau h_.$$ 
Examining ${\mc L}f$ in degrees $\alpha-2,\dots,\alpha+1$ yields the relations:
$$\left\{
\begin{array}{lll}
r_y(\alpha)f_0&=0&[L0]\\
r_y(\alpha+1)f_1-2\alpha\ell_{-1}f_0&=0&[L1]\\
r_y(\alpha+2)f_2-2(\alpha+1)\ell_{-1}f_1+((\ell_{-1}^2)-\tau\ell_{-2})f_0&=0&[L2]\\
r_y(\alpha+3)f_3-2(\alpha+2)\ell_{-1}f_2+((\ell_{-1}^2)-\tau\ell_{-2})f_1&=0&[L3]
\end{array}\right.$$
Proceeding similarly for ${\mc M}f$ yields:
$$\left\{
\begin{array}{lll}
r_x(\alpha)f_0&=0&[M0]\\
r_x(\alpha+1)f_1-\tau\ell_{-1}f_0&=0&[M1]\\
r_x(\alpha+2)f_2-\tau\ell_{-1}f_1-\tau\ell_{-2}f_0&=0&[M2]\\
r_x(\alpha+3)f_3-\tau\ell_{-1}f_2-\tau\ell_{-2}f_1-\tau\ell_{-3}f_0&=0&[M3]
\end{array}\right.$$
From $[L0,M0]$, we get $r_x(\alpha)=r_y(\alpha)=0$, that is $h_x=h_y=\tau^{-1}(\alpha+\tau-1)$.\\
From $[L1,M1]$ we get $\alpha=\tau/2$ ($\alpha=2/\kappa$, $\tau h=\frac2\kappa(\frac 6\kappa-1)$) and $2f_1=\ell_{-1}f_0$. It follows that $r_.(\alpha+k)=k(k-1+2\tau)$.\\
Given $2f_1=\ell_{-1}f_0$, $[L2]$ and $[M2]$ are equivalent to:
$$(4\tau+2)f_2=\tau\ell_{-1}f_1+\tau\ell_{-2}f_0=\tau(\frac 12(\ell_{-1})^2+\ell_{-2})f_0$$
Finally, considering $[L3]-[M3]$, one gets:
$$-4\ell_{-1}f_2+(\ell_{-1})^2f_1+\tau\ell_{-3}f_0=0$$
and thus
$$-4\ell_{-1}\left(\tau(\frac 12(\ell_{-1})^2+\ell_{-2})f_0\right)+(4\tau+2)\left((\ell_{-1})^2\frac 12\ell_{-1}f_0+\tau\ell_{-3}f_0\right)=0$$
or $((\ell_{-1})^3-4\tau\ell_{-1}\ell_{-2}+\tau(4\tau+2)\ell_{-3})f_0=0$, as it should.

From this example, we can expect the following general algebraic elimination process: use one of the singular vector equations to write the ``descendants" $f_1,f_2,\dots$ in terms of $f_0$; then plug in these expressions in the other singular vector equation to find, at high enough order in the expansion, a non-trivial relation satisfied by $f_0$. Instead of actually computing this relation, which is impractical in general, we will simply show that it satisfies the characteristic property of a singular vector and use their classification. In order to do this we will need to extend the (finite-dimensional) differential operators $\hat\ell_n$, $\tilde\ell_n$ to commuting Virasoro representations. This is in essence the argument of Lemma \ref{Lem:algfus}.

\section{Algebraic elimination}

\subsection{Singular vectors}\label{ssec:singularvec}

We begin by collecting a few classical facts on the Virasoro algebra, its highest-weight representations and singular vectors; see e.g. \cite{Kac_Bombay,IohKog_Vir} and references therein for a complete account.

The {\em Virasoro algebra} $\Vir$ is the infinite-dimensional Lie algebra 
$$\Vir=\C{\bf c}\oplus \bigoplus_{n\in\Z}\C L_n$$
where ${\bf c}$ is a central element ($[{\bf c},\Vir]=\{0\}$) and for $m,n\in\Z$
$$[L_m,L_n]=(m-n)L_{m+n}+\delta_{m,-n}\frac{m(m^2-1)}{12}{\bf c}$$ 
Set ${\mf h}=\C {\bf c}\oplus \C L_0$, $\Vir^+=\bigoplus_{n>0}\C L_n$, $\Vir^-=\bigoplus_{n<0}\C L_n$. 
Then $\Vir=\Vir^-\oplus {\mf h}\oplus\Vir^+$; $\Vir^\pm,{\mf h}$ are subalgebras; and ${\mf h}$ is abelian.\\
For a Lie algebra ${\mf g}$, we denote by ${\mc U}({\mf g})$ its universal enveloping algebra. We have:
$${\mc U}(\Vir^-)=\bigoplus_{\stackrel{n\geq 0}{0<i_1\leq\dots\leq i_n}}\C L_{-i_k}\dots L_{-i_1}$$
and refer to $\{L_{-i_k}\dots L_{-i_1}: 0<i_1\leq\dots\}$ as the standard basis.

$\Vir$ is a $\Z$-graded Lie algebra, with $\deg(L_n)=n$ for $n\in\Z$ and $\deg({\bf c})=0$; ${\mc U}(\Vir)$ is consequently also $\Z$-graded.

We say that a Virasoro module $V$ has central charge $c\in\C$ if ${\bf c}v=cv$ for all $v\in V$.

In a Virasoro module $V$, a {\em $(c,h)$-highest-weight vector} $v\in V$ is an element s.t. ${\bf c}v=cv$, $L_0v=hv$, $L_nv=0$ if $n>0$. A highest-weight representation is a representation generated by a highest-weight vector $v$ (i.e. spanned by ${\mc U}(\Vir^-)v$).\\
The Verma module $M(c,h)$ is the $(c,h)$-highest-weight module:
$$M(c,h)=\bigoplus_{\stackrel{n\geq 0}{0<i_1\leq\dots\leq i_n}}\C L_{-i_k}\dots L_{-i_1}v$$   
generated by the highest-weight vector $v$. Set
$$M(c,h)_n=\bigoplus_{\stackrel{0<i_1\leq\dots\leq i_k}{i_1+\cdots+i_k=n}}\C L_{-i_k}\dots L_{-i_1}v$$
This is the $(h+n)$-eigenspace of $L_0$; $n$ is the {\em level}.

A {\em singular vector} $w$ in $M(c,h)$ is a highest-weight vector at level $n>0$. If a (non-zero) singular vector exists, it generates a proper submodule (isomorphic to $M(c,h+n)$). It may be written as $w=\Delta v$, $\Delta\in {\mc U}(\Vir^{-})$. Moreover, the coefficient of $L_{-1}^n$ in $\Delta$ has to be nonzero (e.g. Section 5.2.1 in \cite{IohKog_Vir}), so that we may normalize it to be 1. 

We use the parameterization:
\begin{equation}\label{eq:param}
\begin{array}{ll}
c&=13-6(\tau+\tau^{-1})\\
h_{r,s}(\tau)&=\frac{r^2-1}{4}\tau+\frac{1-rs}{2}+\frac{s^2-1}4\tau^{-1}=\frac{(r\tau-s)^2-(\tau-1)^2}{4\tau}
\end{array}
\end{equation}
Trivially $c(\tau)=c(\tau^{-1})$, $h_{r,s}(\tau)=h_{s,r}(\tau^{-1})$.

From the {\em Kac determinant formula}, we know that if $M(c,h)$ is degenerate (contains a singular vector), then $(c,h)=(c(\tau),h_{r,s}(\tau))$ for some $r,s\in\N$, $\tau\in\C^*$.

The general classification of submodules of Verma modules is quite intricate (e.g. \cite{FeiFuc_Verma,IohKog_Vir}). Any highest-weight representation is a quotient of a Verma module, hence the interest of their submodules. For simplicity, we consider the case where $\tau$ is {\em irrational} (case II$_+$ in the Feigin-Fuchs classification). Then for $r,s\in\N^*$, $(c,h)= (c(\tau),h_{r,s}(\tau))$, $M(c,h)$ has a unique proper submodule; it is generated by the highest-weight vector $\Delta_{r,s}v$ at level $rs$ and isomorphic to $M(c,h+rs)$. We can normalize $\Delta_{r,s}=\Delta_{r,s}(\tau)\in {\mc U}(\Vir^-)$ so that the coefficient of $L_{-1}^{rs}$ is 1. Then the other coefficients are Laurent polynomials in $\tau$.

For small $r,s$, $\Delta_{r,s}$ may be evaluated by direct computation (it is a finite-dimensional linear algebra problem). This gives:
\begin{align*}
\Delta_{1,1}(\tau)&=L_{-1}\\
\Delta_{2,1}(\tau)&=L_{-1}^2-\tau L_{-2}\\
\Delta_{3,1}(\tau)&=L_{-1}^3-2\tau(L_{-1}L_{-2}+L_{-2}L_{-1})+4\tau^2L_{-3}
\end{align*}

There is no explicit formula for $\Delta_{r,s}(\tau)$ for general $r,s\in\N$.  When $s=1$, the Benoit--Saint-Aubin formula \cite{BSA} gives:
\begin{equation}\label{eq:BSA}
\Delta_{r,1}(\tau)=\sum_{\stackrel{n_i\geq 1}{n_1+\cdots+n_k=N}}\frac{((r-1)!)^2(-\tau)^{r-k}}{\prod_{i=1}^{k-1}(n_1+\cdots+n_i)(r-n_1-\cdots-n_i)}L_{-n_1}\dots L_{-n_r}
\end{equation}
(and $\Delta_{r,s}(\tau)=\Delta_{s,r}(\tau^{-1})$).

\subsection{Fusion}

Let $t$ be a formal variable, $\alpha\in\R$, $h\in\R$ and $V_{\alpha,h}=\C[[t]][t^{-1}]t^\alpha$ the space of formal series of type:
$$t^\alpha\sum_{k\in\Z}a_kt^k$$
with $\inf\{k:a_k\neq 0\}>-\infty$. Recall that $V_{\alpha,h}$ is a Virasoro module with central charge $0$ and action given by 
$$\ell_n=-t^{n+1}\partial_t-(n+1)ht^n$$ 
(with $\partial_t t^{\alpha+k}=(\alpha+k)t^{\alpha+k-1}$).\\
Given a Virasoro module $W$ with central charge $c$, we may consider $W\bigotimes V_{\alpha,h}$ with action given by 
$$\hat L_n (v\otimes f)=(L_n v)\otimes f+v\otimes (\ell_n f)$$
and the same central charge as $V$. We will omit $\otimes$'s for simplicity and denote by $t^\alpha\sum_{k}v_kt^k$ a generic element of $W\bigotimes V_{\alpha,h}$:
$$\hat L_n(vt^{\alpha+k})=(L_n v)t^{\alpha+k}-(\alpha+k+(n+1)h)v t^{\alpha+k+n}$$
The $\hat L_n$'s may be thought as representing a deformation at a point $x$, keeping track of a nearby point $y=x+t$. We may also consider deformations at $y$. This motivates the introduction of operators:
\begin{align*}
\tilde L_{-1}&=\partial_t\\
\tilde L_{-2}&=-t^{-1}\partial_t+t^{-1}L_{-1}+\frac {\tilde h}{t^2}+\sum_{k\geq 0}t^kL_{-2-k}
\end{align*}
Set $\tilde\Delta_{1,2}=\tilde L_{-1}^2-\tau \tilde L_{-2}$.
To summarize, $L_n$ operates on $W$; $\ell_n$ on $V_{\alpha,h}$; $\hat L_n,\tilde L_n$ on $W\bigotimes V_{\alpha,h}$. The following lemma is a version of the fusion rule: ``$\phi_{r,s}\times \phi_{2,1}=\phi_{r+1,s}+\phi_{r-1,s}$'', and is a conceptual generalization of the elementary elimination arguments of the previous section (second approach). There are some formal similarities with treatments in the CFT literature (see \cite{BdFIZ_singular}, 8.A in \cite{DiF}). Remark however that our arguments do not rely on any operator algebra  assumption \cite{BPZ}.

\begin{Lem}[Fusion]\label{Lem:algfus}
Assume that $w=t^\alpha\sum_{k\geq 0}v_kt^k\in W\bigotimes V_{\alpha,h}$ is a highest-weight vector with weight $h_{r,s}$, $r,s\in\N^*$, and satisfies:
\begin{align*}
\hat\Delta_{r,s}w&=0\\
\tilde \Delta_{2,1}w&=0
\end{align*}
where $h=h_{2,1}$, $\tilde h=h_{r,s}$, $\alpha=h_{r\pm 1,s}-h_{r,s}-h_{2,1}$; and that $\tau$ is irrational. Then $v_0$ is a highest-weight vector in $W$ with weight $h_{r\pm 1,s}$ and satisfies $\Delta_{r\pm 1,s}v_0=0$.
\end{Lem}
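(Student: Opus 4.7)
The plan is to promote the elementary elimination of Section~2 (second approach) to an abstract representation-theoretic argument, lifting $w$ to a universal Verma module and invoking the Feigin-Fuchs classification of singular vectors.

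First, I would read off the consequences of the highest-weight hypothesis on $w$. Expanding $\hat L_n w = 0$ ($n \geq 1$) and $\hat L_0 w = h_{r,s} w$ in powers of $t$, the $t^\alpha$ coefficient gives $L_n v_0 = 0$ for $n \geq 1$ and $L_0 v_0 = (h_{r,s} + \alpha + h) v_0 = h_{r\pm 1, s} v_0$, identifying $v_0$ as a highest-weight vector in $W$ of the claimed weight; higher orders yield $L_n v_m = (\alpha + m - n + (n+1) h) v_{m-n}$. Next, expanding $\tilde\Delta_{2,1} w = 0$, the coefficient of $t^{\alpha + m - 2}$ reads
\[
p(\alpha + m) v_m = \tau L_{-1} v_{m-1} + \tau \sum_{k=0}^{m-2} L_{-(m-k)} v_k
\]
with indicial polynomial $p(\beta) = \beta(\beta - 1) + \tau\beta - \tau\tilde h$. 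The roots of $p$ are precisely $\alpha_\pm = h_{r\pm 1, s} - h_{r,s} - h_{2,1}$, so $p(\alpha) = 0$ is consistent with the hypothesis; and $p(\alpha + m) = 0$ for integer $m \geq 1$ would force $m = \pm(r\tau - s)$, impossible since $\tau$ is irrational. The recursion thus yields $v_m = P_m v_0$ for uniquely determined $P_m \in \mc U(\Vir^-)$ of level $m$, with $P_0 = 1$.

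The central maneuver is the universal lift. Let $M := M(c, h_{r\pm 1, s})$ with generator $\tilde v$ and define $\tilde w := t^\alpha \sum_m (P_m \tilde v) t^m \in M \otimes V_{\alpha, h}$. The surjection $\pi: M \twoheadrightarrow \langle v_0 \rangle \subseteq W$ sending $\tilde v \mapsto v_0$ satisfies $(\pi \otimes \mathrm{id})\tilde w = w$. By construction, $\tilde\Delta_{2,1} \tilde w = 0$; moreover $\tilde w$ itself is highest-weight of weight $h_{r,s}$ under the $\hat L$-action, an assertion equivalent to the identities $L_n P_m \tilde v = (\alpha + m - n + (n+1) h) P_{m-n} \tilde v$ in $M$, which follow from the Virasoro commutation relations combined with the recursion defining $P_m$ (induction on $m$ using $[L_n, L_{-k}]$ together with the hypothesis $h = h_{2,1}$). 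Since $\Delta_{r,s}$ is a Virasoro singular vector, $\hat\Delta_{r,s}\tilde w$ is then either zero or a highest-weight vector of weight $h_{r,s} + rs$ in $M \otimes V_{\alpha, h}$.

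Finally, I would extract $\Delta_{r\pm 1, s}$. The lowest nonzero $t$-coefficient of $\hat\Delta_{r,s}\tilde w$ is a singular vector in $M$, so by Feigin-Fuchs (for irrational $\tau$, the unique proper submodule of $M$ is generated by $\Delta_{r\pm 1, s}\tilde v$) it is proportional either to $\tilde v$ (at $t^{\alpha - rs}$) or to $\Delta_{r\pm 1, s}\tilde v$ (at $t^{\alpha \pm s}$). A Kac-type computation shows the $t^{\alpha - rs}$ coefficient is $\phi(\alpha, h_{2,1})\tilde v$, with $\phi$ vanishing at $\alpha = \alpha_\pm$ (for $(r,s) = (2,1)$ this reduces to $\phi = p(\alpha)$). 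Hence the leading nonzero $t$-coefficient sits at $t^{\alpha \pm s}$ and equals $c \cdot \Delta_{r\pm 1, s}\tilde v$ for some scalar $c$; applying $\pi \otimes \mathrm{id}$ to the hypothesis $\hat\Delta_{r,s} w = 0$ at this order gives $c \cdot \Delta_{r\pm 1, s} v_0 = 0$ in $W$. The main technical obstacle is establishing $c \neq 0$ (equivalently $\hat\Delta_{r,s}\tilde w \neq 0$): otherwise the universal $\tilde w$ would satisfy both singular-vector equations, contradicting the Feigin-Fuchs structure of $M$. Concretely, $c$ can be verified to be a nonzero rational function of $\tau$ either by the explicit iteration of Section~2 (in the $(r,s) = (2,1)$ case), or for $s = 1$ via the Benoit-Saint-Aubin formula for $\Delta_{r, 1}$.
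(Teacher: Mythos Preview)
Your overall architecture---read off the highest-weight property of $v_0$, solve the $\tilde\Delta_{2,1}$ recursion to express $v_m=P_mv_0$, then extract a nontrivial relation from $\hat\Delta_{r,s}w=0$ and invoke Feigin--Fuchs---matches the paper's. The universal lift to $M=M(c,h_{r\pm1,s})$ is a pleasant reformulation. But the argument has a real gap at exactly the point you flag as the ``main technical obstacle'': you do not establish $c\neq 0$ (equivalently $\hat\Delta_{r,s}\tilde w\neq 0$) for general $(r,s)$. Your proposed contradiction---that $\hat\Delta_{r,s}\tilde w=0$ together with $\tilde\Delta_{2,1}\tilde w=0$ would violate the Feigin--Fuchs structure of $M$---is not justified: Feigin--Fuchs describes the submodule lattice of $M$, not highest-weight vectors in the tensor product $M\otimes V_{\alpha,h}$, and there is no obvious reason a highest-weight vector there cannot be annihilated by both operators. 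Your fallback (explicit check for $(r,s)=(2,1)$, or Benoit--Saint-Aubin for $s=1$) leaves the general case open. A secondary gap is the ``Kac-type computation'' showing the $t^{\alpha-rs}$ coefficient vanishes: this is asserted rather than carried out.

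The paper closes this gap by a different, elementary device that avoids both issues at once. Rather than trying to locate the first nonzero coefficient of $\hat\Delta_{r,s}\tilde w$ and identify it as $\Delta_{r\pm1,s}$, one simply shows that \emph{some} coefficient is nonzero in ${\mc U}(\Vir^-)$, by tracking only the $L_{-1}^k$ component of each $R_k$ (your $P_k$) and of $\hat\Delta_{r,s}$. This reduces to a polynomial identity: if all coefficients vanished, one would obtain $\sum_{d=0}^{rs}p_d(\ell)\prod_{j=d+1}^{rs}q(\ell+j)=0$ for all $\ell$, with $p_0\equiv 1$ and $\deg p_d\leq d$, $\deg q=2$; the top-degree term $2rs$ comes only from $d=0$, a contradiction. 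Once a single nontrivial relation $P_kv_0=0$ is in hand, the Verma map $M(c,h_{r\pm1,s})\to W$ has nonzero kernel, and since for irrational $\tau$ the unique proper submodule is generated by $\Delta_{r\pm1,s}\tilde v$, the conclusion follows without ever needing your steps on the location of the leading coefficient or the highest-weight property of $\tilde w$.
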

\begin{proof}
The proof proceeds in several steps. The idea is to show that from the two conditions algebraic elimination produces a non-trivial condition
$Pv_0=0$, $P\in{\mc U}(\Vir^-)\setminus\{0\}$; and use the classification of Verma (sub)modules to conclude that $\Delta_{r\pm 1,s}v_0=0$.

{\bf Highest-weight condition.} We check that $v_0$ is a $h_{r\pm 1,s}$-highest-weight vector (in $V$). Indeed, since $w$ is a $h_{r,s}$-h.w. vector in $V\bigotimes M_{\alpha,h}$, we have $\hat L_0w=h_{r,s}w$, $\hat L_n w=0$ for $n>0$. Expanding and decomposing by degree, this gives: $(L_0-\alpha-k-h)v_k=h_{r,s}v_k$ and in particular $L_0v_0=h_{r\pm 1,s}v_0$.\\
Examining $\hat L_n w=0$ in degree $0$ gives $L_nv_0=0$ for $n>0$. (The descendants $v_1,\dots$ are obviously not highest-weight). 

{\bf The condition $\tilde \Delta_{2,1}w=0$.}\\
Set $r(\nu)=\nu(\nu-1)+\tau\nu-\tau\tilde h$ so that
$$(\partial_t^2+\tau t^{-1}\partial_t-\tau\tilde ht^{-2})t^{\nu}=r(\nu)t^{\nu-2}$$
We have 
$$\alpha_\pm=h_{r\pm 1,s}-h_{r,s}-h_{2,1}=\frac\tau 4\pm\frac {r\tau-s}2-(\frac 34\tau-\frac 12)=\frac{1-\tau}2\pm \frac {r\tau-s}2
$$
and
\begin{align*}
(\nu-\alpha_+)(\nu-\alpha_-)&=(\nu+\frac{\tau-1}2)^2-\frac{(r\tau-s)^2}4\\
&=\nu^2+(\tau-1)\nu-\tau h_{r,s}=r(\nu)
\end{align*}
We expand $\tilde\Delta_{1,2}w=0$ according to the grading (starting from degree $\alpha-2$):
\begin{align*}
r(\alpha)v_0&=0\\
r(\alpha+1)v_1-\tau L_{-1}v_0&=0\\
r(\alpha+2)v_2-\tau L_{-1}v_1-\tau L_{-2}v_0&=0\\
\vdots\\
r(\alpha+k)v_k-\tau L_{-1}v_{k-1}-\tau\sum_{j=2}^{k}L_{-j}v_{k-j}&=0
\end{align*}
The first equation gives $\alpha=\alpha_\pm$ (provided $v_0$ is nontrivial); this is the indicial equation version of the fusion rule.\\
Since $\tau\notin\Q$, $r-s\tau\notin\Z$. Then $r(\alpha)=0$ and $r(\alpha+k)=k(k\pm(r\tau-s))\neq 0$ for all $k\in\N^*$. Consequently there are elements $R_1,\dots,R_k,\cdots$ of ${\mc U}(\Vir^-)$ s.t. $\tilde \Delta_{1,2}w=0$ iff $v_k=R_kv_0$ for $k\geq 0$ (with $R_0=1$). For instance $R_1=\frac{\tau}{r(\alpha+1)}L_{-1}$, $R_2=\frac{\tau}{r(\alpha+2)}(L_{-1}R_1+L_{-2})$, etc.

{\bf The condition $\hat\Delta_{r,s}w=0$}.\\
Let us consider the first condition, which we may now write as:
$$\hat\Delta_{r,s}\left(t^\alpha\sum_{k\geq 0}t^kR_kv_0\right)=0$$
By decomposing according to degree, this gives elements $P_0,P_1,\dots$ of ${\mc U}(\Vir^-)$ s.t. $P_kv_0=0$, $k\geq 0$:
$$\hat\Delta_{r,s}\left(t^\alpha\sum_{k\geq 0}t^kR_k\right)=t^{\alpha-rs}\sum_{k\geq 0}t^kP_k$$
with $P_k$ at level $k$. We want to verify that $P_k\neq 0$ for some $k$.\\
We are going to focus on the coefficients of $L_{-1}^n$, which are easier to keep track of. Indeed, if $P,Q\in {\mc U}(\Vir^-)$ are homogeneous elements, $P=aL_{-1}^m+\cdots$, $Q=b L_{-1}^n+\cdots$ in the standard basis, then $PQ=ab L_{-1}^{m+n}+\cdots$ in the standard basis, since taking commutators in ${\mc U}(\Vir^-)$ does not produce monomials in $L_{-1}$.

First a trivial induction shows that 
$$R_k=\frac{\tau^k}{r(\alpha+1)\dots r(\alpha+k)}L_{-1}^k+\cdots$$ 
in the standard basis. Write $q(k)=r(\alpha+k)/\tau=k(k+2\alpha+\tau-1)/\tau$, a quadratic polynomial; then $R_k=c_kL_{-1}^k+\cdots$, with $c_k=(q(1)\dots q(k))^{-1}$.

Next we write
$$\hat\Delta_{r,s}=\sum_{i,j,k\geq 0, i+j+k=rs}b_{i,j,k}t^{-i}\partial_t^j L_{-1}^k+\cdots$$
where the remainder does not contain any monomial in $L_{-1}$. By our choice of normalization, $\hat\Delta_{r,s}=\hat L_{-1}^{rs}+\cdots$, so that the leading coefficient $b_{0,0,rs}$ is one.

By contradiction, let us assume that no $P_k$ has a nonzero monomial in $L_{-1}$. This is saying that
$$\left(\sum_{i,j,k\geq 0, i+j+k=rs}b_{i,j,k}t^{-i}\partial_t^j L_{-1}^k\right)\left(t^\alpha\sum_{\ell\geq 0}\frac{t^\ell L_{-1}^\ell}{q(1)\dots q(\ell)}\right)=0$$
in $\C[[t]][t^{-1},\partial_t,L_{-1}]t^\alpha$. For $d=0,\dots,rs$, set 
$$\left(\sum_{i,j\geq 0,i+j=d}b_{i,j,rs-d}t^{-i}\partial_t^j\right)t^{\alpha+\ell+d}=p_d(\ell)t^{\alpha+\ell}$$
so that $p_d$ is a polynomial of degree at most $d$, and $p_0=b_{0,0,rs}=1$. 

By considering the coefficient of $t^{\alpha+\ell}$ in the penultimate expression, we obtain:
$$\frac{p_0(\ell)}{q(1)\dots q(\ell)}+\frac{p_1(\ell)}{q(1)\dots q(\ell+1)}+\cdots+\frac{p_{rs}(\ell)}{q(1)\dots q(\ell+rs)}=0$$
or
$$p_0(\ell)q(\ell+1)\dots q(\ell+rs)+p_1(\ell)q(\ell+2)\dots q(\ell+rs)+\cdots+p_{rs}(\ell)=0$$
This holds for infinitely many $\ell$'s, hence it is a polynomial identity. However the first summand has degree exactly $2rs$ and all other summands have degree at most $2rs-1$, which is the desired contradiction.

{\bf Conclusion.}

Consider the morphism of Viraroso modules $M(c,h_{r\pm 1,s})\rightarrow W$ which maps the generator $v$ of $M(c,h_{r\pm 1,s})$ to $v_0\in W$. Its kernel $N$ is a proper submodule of $M(c,h_{r\pm 1,s})$ (it is not reduced to $\{0\}$ by the previous argument; and we may assume $v_0\neq 0$ otherwise the statement is empty). The degenerate Verma module $M(c,h_{r\pm 1,s})$ has a unique proper submodule (we assumed $\tau\notin\Q$), which is generated by the singular vector $\Delta_{r\pm 1,s}v$. Consequently $\Delta_{r\pm 1,s}v_0=0$ in $W$.
\end{proof}

In order to illustrate the mechanism, we work out a few low level cases (notation as in the proof of Lemma \ref{Lem:algfus}). 

{\bf Case $(2,1)\times (2,1)\rightarrow (3,1)$.}\\
If $\alpha=\alpha_+=h_{3,1}-2h_{2,1}=\frac\tau 2$, we have $r(\alpha+k)=k(k+2\tau-1)$, and by examining $\tilde \Delta_{2,1}w=0$ we see
\begin{align*}
0v_0&=0\\
2\tau v_1&=\tau L_{-1}v_0\\
2(2\tau+1)v_2&=\tau (L_{-1}v_1+L_{-2}v_0)\\
3(2\tau+2)v_3&=\tau (L_{-1}v_2+L_{-2}v_1+L_{-3}v_0)\\
&\vdots
\end{align*}
from which we get 
\begin{align*}
R_0&=1\\
R_1&=\frac 12L_{-1}\\
R_2&=\frac{\tau}{4(2\tau+1)}L_{-1}^2+\frac{\tau}{2(2\tau+1)}L_{-2}\\
R_3&=\frac{\tau}{6(\tau+1)}\left(\frac{\tau}{4(2\tau+1)}L_{-1}^3+\frac{\tau}{2(2\tau+1)}L_{-1}L_{-2}+\frac 12L_{-2}L_{-1}+L_{-3}\right)
\end{align*}
We have
\begin{align*}
\hat \Delta_{2,1}&=\hat L_{-1}^2-\tau\hat L_{-2}\\
&=(L_{-1}-\partial_t)^2-\tau (L_{-2}-t^{-1}\partial_t+h_{2,1}t^{-2})\\
&=(L_{-1}^2-\tau L_{-2})-2L_{-1}\partial_t+(\partial_t^2+\tau t^{-1}\partial_t-\tau h_{2,1}t^{-2})
\end{align*}
Then
\begin{align*}
\hat\Delta_{2,1}\left(t^\alpha\sum_{k\geq 0}t^kR_k\right)&
=t^{\alpha-2}(0)+t^{\alpha-1}(-\tau+2\tau.\frac 12)L_{-1}\\
&+t^{\alpha}\left((L_{-1}^2-\tau L_{-2})-(\frac\tau 2+1)L_{-1}^2+(\frac\tau 2L_{-1}^2+\tau L_{-2})\right)\\
&+t^{\alpha+1}\left[(L_{-1}^2-\tau L_{-2})\frac 12L_{-1}
-2(\frac\tau 2+2)L_{-1}\left(\frac{\tau}{4(2\tau+1)}L_{-1}^2+\frac{\tau}{2(2\tau+1)}L_{-2}\right)\right.\\
&\hphantom{t^{\alpha+1}}\left.
+\tau\left(\frac{\tau}{4(2\tau+1)}L_{-1}^3+\frac{\tau}{2(2\tau+1)}L_{-1}L_{-2}+\frac 12L_{-2}L_{-1}+L_{-3}\right)
\right]+\cdots
\end{align*}
so that $P_0=P_1=P_2=0$ and 
\begin{align*}
(2\tau+1)P_3&=\left(\frac{2\tau+1}2-\frac{\tau(\tau+4)}4+\frac{\tau^2}4
\right)L_{-1}^3
+\left(-\tau(\frac\tau 2+2)+\frac{\tau^2}2\right)L_{-1}L_{-2}+\tau\left(2\tau+1
\right)L_{-3}\\
&=\frac 12L_{-1}^3-2\tau L_{-1}L_{-2}+\tau(2\tau+1)L_{-3}\\
&=\frac 12\left(L_{-1}^3-2\tau(L_{-1}L_{-2}+L_{-2}L_{-1})+4\tau^2 L_{-3}\right)=\frac 12\Delta_{3,1}
\end{align*}
Notice that this explicit computation shows that the lemma holds for any $\tau\neq -\frac 12$ for the fusion $\phi_{2,1}\times \phi_{2,1}=\phi_{3,1}$.

{\bf Case $(1,2)\times (2,1)\rightarrow (2,2)$.}\\
Here $\alpha=\alpha_+=h_{2,2}-h_{1,2}-h_{2,1}=-\frac 12$ ($\alpha_-=\frac 32-\tau$). Then $r(\alpha+k)=k(k+\tau-2)$, 
\begin{align*}
0v_0&=0\\
(\tau-1) v_1&=\tau L_{-1}v_0\\
2\tau v_2&=\tau (L_{-1}v_1+L_{-2}v_0)\\
3(\tau+1)v_3&=\tau (L_{-1}v_2+L_{-2}v_1+L_{-3}v_0)\\
&\vdots
\end{align*}
so that
\begin{align*}
R_0&=1\\
R_1&=\frac \tau{\tau-1}L_{-1}\\
R_2&=\frac{\tau}{2(\tau-1)}L_{-1}^2+\frac 12L_{-2}\\
R_3&=\frac{\tau}{3(\tau+1)}\left(\frac{\tau}{2(\tau-1)}L_{-1}^3+\frac 12L_{-1}L_{-2}+\frac{\tau}{\tau-1}L_{-2}L_{-1}
+L_{-3}\right)
\end{align*}
and
\begin{align*}
\hat \Delta_{1,2}&=\hat L_{-1}^2-\tau^{-1}\hat L_{-2}\\
&=(L_{-1}-\partial_t)^2-\tau^{-1} (L_{-2}-t^{-1}\partial_t+h_{2,1}t^{-2})\\
&=(L_{-1}^2-\tau^{-1} L_{-2})-2L_{-1}\partial_t+(\partial_t^2+\tau^{-1} t^{-1}\partial_t-\tau^{-1} h_{2,1}t^{-2})
\end{align*}
We have $(\partial_t^2+\tau^{-1} t^{-1}\partial_t-\tau^{-1} h_{2,1}t^{-2})t^{\alpha+k}=s(k)t^{\alpha+k-2}$ where
$$s(k)=(k-\frac 12)(k-\frac 32)+\tau^{-1}(k-\frac 12)-(\frac 34-\frac 12\tau^{-1})=k(k+\tau^{-1}-2)
$$
Consider
\begin{align*}
\hat\Delta_{1,2}\left(t^\alpha\sum_{k\geq 0}t^kR_k\right)&
=t^{\alpha-2}(0)+t^{\alpha-1}\left(2\cdot\frac 12+\frac{\tau}{\tau-1}(\tau^{-1}-1)\right)L_{-1}\\
&+t^\alpha\left((L_{-1}^2-\tau^{-1} L_{-2})-2\frac 12\frac\tau{\tau-1} L_{-1}^2+2\tau^{-1}\left(\frac{\tau}{2(\tau-1)}L_{-1}^2+\frac 12L_{-2}\right)
\right)\\
&+t^{\alpha+1}P_3+\cdots
\end{align*}
from which we see $P_0=P_1=P_2=0$,
\begin{align*}
P_3&=(L_{-1}^2-\tau^{-1} L_{-2})\frac{\tau}{\tau-1}L_{-1}-2\frac 32L_{-1}(\frac{\tau}{2(\tau-1)}L_{-1}^2+\frac 12L_{-2})\\
&+3(\tau^{-1}+1)\frac{\tau}{3(\tau+1)}\left(\frac{\tau}{2(\tau-1)}L_{-1}^3
+\frac 12L_{-1}L_{-2}+\frac{\tau}{\tau-1}L_{-2}L_{-1}
+L_{-3}\right)\\
&=\frac{\tau}{\tau-1}(1-\frac 32+\frac 12)L_{-1}^3+\left(-\frac{1}{\tau-1}+\frac{\tau}{\tau-1}\right)L_{-2}L_{-1}+\left(-\frac 32+\frac 12\right) L_{-1}L_{-2}+L_{-3}=0
\end{align*}
and after more (lengthy) computations $P_4\propto\Delta_{2,2}$.

\section{Frobenius series}

Consider a vector-valued ODE with a regular singular point at $0$, i.e.
$$\frac{d}{dx}Y(x)=\frac{A(x)}xY(x)$$
with $A$ matrix-valued and analytic near $0$. Assume that $A(0)$ has a simple spectrum $\lambda_1,\dots,\lambda_n$ and that $\lambda_i-\lambda_j\notin\Z$ if $i\neq j$ (non-resonating condition). Then for $i=1,\dots,n$ the ODE has local solutions of type:
$$x^{\lambda_i}\sum_{k\geq 0}x^kY_k$$
(Frobenius series) with positive radius of convergence; these span the solution space near $0$. See e.g. Chapter XV in \cite{Ince_ODE} for a detailed account.

We want a similar regularity result (once the leading singularity is factored out) in a smooth, hypoelliptic set-up. Consider the following differential operator
\begin{equation}\label{eq:diffop0}
{\mc M}=\frac 12\partial_{xx}+\left(\frac{\tilde\rho}{x-y}+a(x,y,{\bf z})\right)\partial_x+\left(\frac{\tau}{2(y-x)}+b(x,y,{\bf z})\right)\partial_y+\sum_{i=1}^nc_i(x,{\bf z})\partial_{z_i}+\left(-\frac{\tau h}{2(x-y)^2}+\frac{d(x,y,{\bf z})}{y-x}\right)
\end{equation}
where for brevity ${\bf z}=(z_1,\dots,z_n)$. We are interested in the singularity at $y=x$, with $y\geq x$. We assume that the coefficients $a,b,c,d$ are smooth in a relative neighborhood 
$$U=\{(x,y,z): |x-x_0|<\eta, 0\leq y-x<\eta, \|{\bf z}-{\bf z}_0\|<\eta \}$$
of $(x_0,y_0,{\bf z}_0)$ ($\eta>0$ fixed). Set $\Delta=\{(x,y,{\bf z})\in U: x=y\}$, the singular boundary component. We will also denote 
$$Z=\sum_{i=1}^n c_i(x,{\bf z})\partial_{z_i},$$
a vector field.

For example,
$$\frac 12\partial_{xx}+\frac{\tilde\rho}{x-y}\partial_x+\frac{\tau}{2(y-x)}\partial_y+\sum_i\frac{\tau}{2(z_i-x)}\partial_{z_i}-\frac{\tau h}{2(y-x)^2}$$
corresponds to a chordal $\SLE_\kappa(\rho)$ where the $\SLE$ starts from $x$, the force point is at $y$, $z_1,\dots,z_n$ are spectator points, $\tau=4/\kappa$, $\tilde\rho=\rho/\kappa$, and $h$ is the weight at $y$.

Denote by $\alpha_{\pm}$ the roots of the indicial equation 
$$\nu(\nu-1)+(\tau+2\tilde\rho)\nu-\tau h=0$$
with $\alpha_+>\alpha_-$.
The goal of this section is to establish the following:
\begin{Lem}\label{Lem:mainreg}
If $f:U\setminus\Delta\rightarrow\R$ satisfies ${\mc M}f=0$ in $U\setminus\Delta$, $f=O((y-x)^{\alpha_-+\eps})$ for some $\eps>0$, and $\frac 12\partial_x^2+Z$ satisfies the H\"ormander bracket condition \eqref{eq:Hormcond} in $U\setminus\Delta$, then $f=(y-x)^{\alpha_+}g_+$, where $g_+$ has a smooth extension to $U$.
\end{Lem}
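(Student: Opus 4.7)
The strategy is to combine a Frobenius-type expansion at $\Delta=\{y=x\}$ with hypoelliptic regularity tangential to $\Delta$. Introduce the transverse coordinate $t=y-x$, so that $\partial_y=\partial_t$ and $\partial_x\mapsto\partial_x-\partial_t$ in the frame $(x,t,{\bf z})$. Conjugate by $t^{\alpha_+}$: writing $f=t^{\alpha_+}g$, one obtains $\tilde{\mc M}g=0$ with $\tilde{\mc M}=t^{-\alpha_+}\mc M\,t^{\alpha_+}$. The defining property of the indicial root $\alpha_+$ cancels the $t^{-2}$ singularity, so the leading singular part of $\tilde{\mc M}$ is of Bessel type
$$\tfrac12\partial_{tt}+\tfrac{c}{t}\partial_t,\qquad c=\tfrac{1+\alpha_+-\alpha_-}{2}>\tfrac12,$$
plus regular terms and lower-order singularities. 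The decay hypothesis becomes $g=O(t^{\alpha_--\alpha_++\eps})$, which in particular forbids the model $t^{\alpha_--\alpha_+}$ (i.e.\ the $t^{\alpha_-}$) branch.

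Next, build a formal solution $g\sim\sum_{k\geq 0}t^k g_k(x,{\bf z})$. Collecting coefficients of $t^{k-2}$ in $\tilde{\mc M}g=0$ yields, for each $k\geq 1$, a recursion
$$\tfrac{k(k+\alpha_+-\alpha_-)}{2}\,g_k\;=\;\mc P_k\bigl(g_0,\dots,g_{k-1}\bigr),$$
where $\mc P_k$ is a differential operator in $\partial_x,\partial_{{\bf z}}$ with smooth coefficients (arising from Taylor expansions of $a,b,c_i,d$ in $t$). Because $\alpha_+>\alpha_-$, the prefactor is nonzero for every $k\geq 1$, so each $g_k$ is uniquely determined by $g_0$ and inherits its smoothness in $(x,{\bf z})$.

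The core of the proof is then to show that the true $g$ agrees with this formal series to all orders, with a smooth leading coefficient $g_0$. Interior hypoellipticity of $\mc M$ on $U\setminus\Delta$ follows from the bracket hypothesis: $[\partial_x,\mc M]$ produces a $\partial_y$-component via the drift $\tfrac{\tau}{2(y-x)}\partial_y$, and iterated brackets of $\partial_x$ with $Z$ supply the ${\bf z}$-directions. To push smoothness up to $\Delta$, introduce the partial sums $g^{(N)}=\sum_{k=0}^N t^k g_k$ and analyse the remainder $r_N=g-g^{(N)}$: by construction $\tilde{\mc M}r_N=O(t^{N-1})$, and a Bessel-type a priori estimate for the model operator $\tfrac12\partial_{tt}+\tfrac{c}{t}\partial_t$ (which solves its inhomogeneous problem with one extra order of regularity in $t$) upgrades a decay bound on $r_N$ to the next power of $t$, treating the non-model terms perturbatively. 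Coupling this to the tangential hypoellipticity of $\tfrac12\partial_x^2+Z$, which governs the smoothness of $g_0$ (and of the subsequent $g_k$) along $\Delta$, yields $g\in C^\infty(U)$ by a bootstrap in $N$.

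The main obstacle is precisely this joint estimate: one needs an a priori regularity result for the Fuchsian hypoelliptic operator $\tilde{\mc M}$ that is simultaneously sharp at the boundary $t=0$ (to bootstrap the weak decay $f=O(t^{\alpha_-+\eps})$ into $C^\infty$-regularity up to $\Delta$) and compatible with the tangential Hörmander condition on $\tfrac12\partial_x^2+Z$. Classical Hörmander theory handles only the interior, and Fuchsian boundary regularity (e.g.\ Baouendi--Goulaouic style) is traditionally stated in the elliptic setting; what is needed here is a synthesis for degenerate, Bessel-type hypoelliptic operators. The abstract suggests that this is achieved through a combination of analytic (hypoellipticity plus Fuchsian boundary) and probabilistic (stochastic flow for the degenerate diffusion generated by $\mc M$, which couples a Bessel-like process in $t$ with a hypoelliptic diffusion in $(x,{\bf z})$, followed by a Girsanov-type argument) techniques.
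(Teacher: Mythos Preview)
Your outline correctly identifies the conjugation step (writing $f=(y-x)^{\alpha_+}g$ and removing the $t^{-2}$ singularity to obtain a Bessel-type operator) and the overall shape of the problem (Fuchsian boundary plus tangential hypoellipticity). However, what you have written is not a proof: you explicitly stop at the ``main obstacle'' and then speculate, based on the abstract, about how it might be resolved. The formal-series bootstrap you propose is also circular as stated: your recursion presupposes that $g_0$ is smooth in $(x,{\bf z})$ in order to conclude that the $g_k$ are, but the smoothness of $g_0$ up to $\Delta$ is precisely the hard part of the lemma. Nothing in your argument produces it.

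The paper's proof is structured quite differently and does not go through a formal Frobenius expansion at all. It has two essentially disjoint steps. First, it shows that $f$ is \emph{mild}, meaning $\nabla^k f=O((y-x)^{-M_k})$ for every $k$; this is done by lifting $\tfrac12\partial_x^2+Z$ to a nilpotent Lie group (the Rothschild--Stein/Goodman lifting theorem), so that the full operator $\mc M$ scales to a homogeneous, left-invariant hypoelliptic model, and then applying a parametric Harnack estimate uniformly under this scaling. Second, after conjugation to the Bessel form (your first paragraph), regularity up to the boundary is obtained by stochastic flow arguments: a Feynman--Kac/Dynkin representation driven by a Bessel$(\delta)$ flow with $\delta>2$ gives continuity of $g$ up to $\Delta$; smoothness of the flow in the $(y,{\bf z})$ directions (but only $C^1$ in $r$) is used to upgrade the mildness bounds on tangential derivatives to $O(r^{-\eps})$; finally an elementary parametric ODE lemma in the single variable $r$ converts tangential smoothness into full smoothness. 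No a priori Fuchsian--hypoelliptic boundary estimate of the kind you invoke is used; the probabilistic representation replaces it.
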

More generally, in the non-resonating case $\alpha_+-\alpha_-\notin\N$, we expect that if $f=O((y-x)^{-M})$ for some $M>0$, then
$$f=(y-x)^{\alpha_-}g_-+(y-x)^{\alpha_+}g_+$$
with $g_-,g_+$ extending smoothly to $U$. That would allow to lift the $r\tau-s>0$ assumption in Theorem \ref{Thm:fus}.

The statement of the Lemma is unchanged by replacing ${\mc M}$ with $(y-x)^{-2\tilde\rho}{\mc M}(y-x)^{2\tilde\rho}$ and we will thus assume throughout that $\tilde\rho=0$.

There are two (essentially disjoint) main steps in the proof of Lemma \ref{Lem:mainreg}. The first one (Lemma \ref{Lem:mild}) will be to show that $f$ is {\em mild} in the sense that for every $k\geq 0$, $\nabla^kf=O((y-x)^{-M_k})$ for some $M_k>0$; this will rely on the local approximation of hypoelliptic operators by dilation covariant operators (recalled in Lemma \ref{Lem:hyponil}). The second step, summarized in Lemma \ref{Lem:reg}, is based mostly on stochastic flow arguments.

\subsection{Mildness}

Before we describe the content of the section, let us start with a brief reminder on hypoelliptic operators and H\"ormander's condition (we refer the interested reader to \cite{Stroock_PDE} for a detailed account), in the restricted framework that will be of use to us. Consider two smooth vector fields $X,Y$ (identified with first-order partial differential operators) in an open subset or $\R^d$:
$$X=\sum_{i=1}^d f_i\partial_i,{\rm\ \ \ \ }
Y=\sum_{i=1}^d g_i\partial_i$$
and the second order operator in {\em H\"ormander form}
$${\mc D}=\frac 12X^2+Y$$
(we could also add a smooth potential). This operator is manifestly not elliptic if $d\geq 2$. However, under the {\em H\"ormander condition}:
\begin{equation}\label{eq:Hormcond}
{\rm The\ vector\ fields\ }X,Y{\rm\ and\ their\ iterated\ brackets\ }[X,Y],[X,[X,Y]],\dots
{\rm\ span\ the\ tangent\ space\ at\ every\ point}
\end{equation}
the operator ${\mc D}$ shares key qualitative properties with elliptic operators: if $h$ is a distribution satisfying ${\mc D}h=0$ in an open set, then $h$ is smooth there. More precisely, if ${\mc D}h$ is in $H^s_{loc}$ (Sobolev space), then $h\in H^{s+\delta_0}_{loc}$ where $\delta_0>0$ depends on the number of brackets needed to span the tangent space in \eqref{eq:Hormcond}. For example,
$$\frac 12\partial_{xx}+x\partial_y+x^2\partial_z$$
satisfies \eqref{eq:Hormcond}. We then say that ${\mc D}$ is {\em hypoelliptic}. In \cite{Dub_Virloc}, we checked that $\Delta_{2,1}$ (when written in a trivialization) satisfies \eqref{eq:Hormcond}.

The goal of this subsection is to establish the following

\begin{Lem}\label{Lem:mild}
Let ${\mc M}$ be as in Equation \eqref{eq:diffop0} and assume that $\frac 12\partial_x^2+Z$ satisfies the H\"ormander bracket condition \eqref{eq:Hormcond}. If $f$ is smooth and s.t. ${\mc M}f=0$ and $f=O((y-x)^{-M})$ near a point on the singular boundary, then for all $k\geq 0$ there is $M_k$ s.t. $\nabla^k f=O((y-x)^{-M_k})$ near that point.
\end{Lem}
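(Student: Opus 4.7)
The plan is to combine interior hypoellipticity of $\mc M$ with a rescaling argument near $\Delta$ that casts the problem into the framework of the uniform subelliptic estimates of Lemma~\ref{Lem:hyponil}.

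On $U\setminus\Delta$ the operator $\mc M$ is hypoelliptic: its principal part $\tfrac12\partial_x^2+Z$ satisfies~\eqref{eq:Hormcond} by assumption, and the remaining terms of $\mc M$ are smooth there. Thus $f$ is already smooth on $U\setminus\Delta$, and only the blow-up of $\nabla^k f$ as $u:=y-x\to 0^+$ needs to be controlled. By the remark preceding the lemma, we may assume $\tilde\rho=0$.

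Fix a base point $(x_0,x_0,{\bf z}_0)\in\Delta$. Discarding the smooth subleading terms of $\mc M$ and freezing the coefficients of $Z$ at the base point yields the model operator
\[
\mc M_0 \;=\; \tfrac12\partial_x^2 \;+\; \tfrac{\tau}{2u}\partial_u \;-\; \tfrac{\tau h}{2u^2} \;+\; Z_0.
\]
This is covariant under anisotropic dilations $\delta_\lambda:(x,u,{\bf z})\mapsto(\lambda x,\lambda u,\lambda^{d_i}z_i)$ with weights $d_i\ge 1$ read off from the nilpotent approximation of $\tfrac12\partial_x^2+Z_0$ supplied by Lemma~\ref{Lem:hyponil}. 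For small $u_*>0$ set
\[
f_{u_*}(X,U,{\bf Z}) \;:=\; f\bigl(x_0+u_*X,\;x_0+u_*U,\;z_{0,i}+u_*^{d_i}Z_i\bigr),
\]
which lives on a fixed unit box $\mc B=\{|X|\le 1,\,1/2\le U\le 2,\,|Z_i|\le 1\}$. The equation $\mc M f=0$ becomes $\tilde{\mc M}_{u_*}f_{u_*}=0$, where $\tilde{\mc M}_{u_*}$ is the operator in rescaled variables; by Taylor-expanding the smooth coefficients $a,b,c_i,d$ about the base point, it equals $\mc M_0$ up to perturbations of strictly lower homogeneity, vanishing as $u_*\to 0$. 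Consequently $\{\tilde{\mc M}_{u_*}\}_{u_*\in(0,\eta_0)}$ is a uniformly hypoelliptic family on $\mc B$, and Lemma~\ref{Lem:hyponil} supplies uniform subelliptic estimates
\[
\|g\|_{H^{s+\delta_0}(K)}\;\le\; C\bigl(\|\tilde{\mc M}_{u_*}g\|_{H^s(K')}+\|g\|_{H^s(K')}\bigr)
\]
on nested compacts $K\subset K'\subset \mc B$, with $C$ independent of $u_*$.

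Applying this iteratively to $f_{u_*}$, starting from the hypothesis $\|f_{u_*}\|_{L^\infty(\mc B)}=O(u_*^{-M})$ and $\tilde{\mc M}_{u_*}f_{u_*}=0$, yields $\|f_{u_*}\|_{C^k(K)}=O(u_*^{-M})$ for every $k$. Scaling back gives $|\nabla^k f|\le C_k u_*^{-M-c_k}$ on the preimage of $\mc B$ under $\delta_{u_*}$, for an explicit $c_k$ accounting for the dilation weights of the derivatives. A covering of a relative neighborhood of $\Delta$ in $U$ by such boxes with smoothly varying base point gives the claim with $M_k=M+c_k$. The crux of the argument is the uniformity in $u_*$ of the subelliptic estimate for a family of operators whose coefficients degenerate as $u_*\to 0$; this is exactly what the nilpotent approximation of Lemma~\ref{Lem:hyponil} delivers. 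The singular potential $-\tau h/(2U^2)$ causes no difficulty because $U\ge 1/2$ on $\mc B$.
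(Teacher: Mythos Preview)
Your overall strategy---rescale to a unit box away from the singularity and invoke uniform hypoelliptic estimates for the resulting family of operators---is the paper's approach. But the execution has two real gaps.

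First, the model operator $\mc M_0$ as you define it is not hypoelliptic. If $Z_0$ is obtained by \emph{freezing} the coefficients of $Z=\sum_i c_i(x,{\bf z})\partial_{z_i}$ at the base point, then $Z_0$ is a constant-coefficient vector field, $[\partial_x,Z_0]=0$, and H\"ormander's condition fails: $\partial_x,Z_0$ and their brackets span at most a two-dimensional subspace. There is then no nilpotent approximation of $\tfrac12\partial_x^2+Z_0$ from which to read dilation weights, and your anisotropic $\delta_\lambda$ in the original $(x,u,{\bf z})$-coordinates is not well defined. The content of Lemma~\ref{Lem:hyponil} (the Rothschild--Stein lifting) is precisely that one must first \emph{lift} to a higher-dimensional space $\R^{N+1}$, $N\ge n$, in which the lifted operator admits a dilation-homogeneous nilpotent approximation $\tfrac12 R({\bf u})^2+R({\bf v})$. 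The paper then extends this lifting to incorporate the singular $y$-direction (the operators $\hat{\mc M}$ and $\hat{\mc M}^s$ in Section~\ref{ssec:lifting}) and checks that the scaled limit $\hat{\mc M}^s$ is itself hypoelliptic and invariant under a codimension-one subgroup. None of this is achieved by freezing coefficients in the original variables.

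Second, Lemma~\ref{Lem:hyponil} does not supply the uniform subelliptic estimate you invoke; it only gives the lifting and the scaling limit. The uniformity of the H\"ormander constants as $u_*\to 0$ (equivalently, as the rescaled operator runs through a smooth family converging to its nilpotent limit) requires its own argument. The paper proves the parametric Harnack estimate~\eqref{eq:paramharnack} by a contradiction/compactness argument, using that the constants in the standard subelliptic estimate depend only on Sobolev norms of the coefficients and on the number of brackets needed to span. Your sentence ``Lemma~\ref{Lem:hyponil} supplies uniform subelliptic estimates'' elides exactly the step that carries the weight. Once both of these are in place, the covering argument and the back-scaling you describe do conclude the proof, as in the paper.
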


In the absence of singularity, this is a Harnack estimate \cite{Bony_max}, so we want to obtain some uniformity in this estimate near the singular hypersurface. First we present as a model the case of nilpotent group versions of Bessel processes. Then we recall the local approximation of hypoelliptic operators by translation invariant operators on nilpotent  Lie groups in the smooth case; and adapt that approximation to the type of singular operators under consideration and wrap up the argument.

\subsubsection{Bessel-type processes on nilpotent groups.}

In order to build up some intuition, we describe here a class of model operators, which we can think of as nilpotent group avatars of Bessel processes; and sketch an argument for regularity of solutions up to the boundary. This will not be logically needed later on. For general background on nilpotent Lie groups, see \cite{Goodman_nil}.

Let ${\mf g}$ be a (finite-dimensional) nilpotent Lie algebra generated by ${\bf u},{\bf v}$ (we use boldface for vectors in $\R^d$ as well as Lie algebras); and $G=\exp({\mf g})$ the associated simply-connected Lie group (diffeomorphic to ${\mf g}$ and with group law given by the Campbell-Hausdorff formula). Let ${\mf h}$ be the subalgebra generated by ${\bf v}$ and $[{\mf g},{\mf g}]$, $H$ the corresponding subgroup of $G$, and we assume that ${\mf g}=\R{\bf u}\oplus{\mf h}$; let ${\bf h}_1,\dots,{\bf h}_n$ be a basis of ${\mf h}$. 

Then for $g\in G$, we can write $g=\exp(h)\exp(x{\bf u})$ with $h=y_1{\bf h}_1+\cdots+y_n {\bf h}_n\in{\mf h}$ and $x\in\R$; this defines coordinates $x,y_1,\dots,y_n$ on $G$. 

For $g\in G$, let $R(g)$ be the associated left-invariant vector field on $G$: $(R(g)f)(w)={\frac{d}{dt}}_{|t=0}f(w\exp(tg))$; with our choice of coordinates, $R({\bf u})=\partial_{x}$ and $R(h)x=0$ for all $h\in{\mf h}$. Let us consider the generator
$${\mc G}=\frac 12\partial_{xx}+\frac{\delta-1}{2x}\partial_x+R({\bf v})$$
on $G^+=\{g\in G:x>0\}$, which has a singularity along $H$ and is invariant under left translations by elements of $H$. Since ${\bf u},{\bf v}$ generate ${\mf g}$, it is clear that the H\"ormander bracket condition \eqref{eq:Hormcond} is satisfied and consequently ${\mc G}$ is hypoelliptic on $G^+$. Let us assume that $f:G^+\rightarrow\R$ is bounded and ${\mc G}f$ is compactly supported in $G^+$; we want to extend $f$ smoothly to $H$.

Assume that $(X_t,H_t)_{t\geq 0}$ is a process with values in $\R^+\times H\simeq G^+$ and generator ${\mc G}$; $\P^w$ denotes the law of the process started from $w=(x,h)$. Plainly, $(X_t)$ is a Bessel process, which is transient if $\delta>2$ (which we assume throughout this section). Let
$$({\rm\sf G}u)(x,h)=\E^{x,h}\left(\int_0^\infty u(X_t,H_t)dt\right)$$
if, say, $u:G^+\rightarrow\R$ is continuous and compactly supported (then the LHS is finite, by transience). Then ${\rm\sf G}$ is a positive operator, and arguing as in \cite{Bony_max} we obtain the existence of a Green kernel $g:G^+\times G^+\rightarrow\R$ s.t. 
$$({\rm\sf G}u)(w)=\int g(w,w')u(w')d\mu(w'),$$
where $\mu$ is a Haar measure on $G$. Then ${\mc G}^*g(w,.)=0$ away from $w$ (here ${\mc G}^*$ is the adjoint of ${\mc G}$ w.r.t. $\mu$, which is also hypoelliptic), so that $g$ is smooth away from the diagonal. 

We can start the process from a point on $H$ and $w\mapsto\P^w$ extends continuously (w.r.t. weak convergence) to $H$. It follows that $w\mapsto g(w,.)$ is weakly continuous up to $H$ and consequently (\cite{Bony_max}) $w\mapsto g(w,w')$ extends continuously to $H$, and so do all partial derivatives of $g$ w.r.t. its second argument.

From the invariance of ${\mc G}$ under the left action of $H$ (and the choice of reference measure), it is clear that $g(hw,hw')=g(w,w')$ for all $h\in H$. It follows that the derivatives in the $H$ directions of $g$ w.r.t its first argument also extend continuously to $H$.

Now consider $f$ bounded with ${\mc G}f=u$ compactly supported in $G^+$. Then $f(w)=\int_{G^+}g(w,w')u(w')d\mu(w')$ and the previous argument shows that $R({\bf h}_{i_1})\dots R({\bf h}_{i_k})f$ extends continuously to $H$ for any multi-index $i_1,\dots,i_k$. From here, by simply writing $(\frac 12\partial_{xx}+\frac{\delta-1}{2x}\partial_x)f=-R({\bf v})f$, it is easy to see that $f$ extends smoothly to $H$.

As a concrete example of this situation, consider the Heisenberg group
$$G=H_3(\R)=\left\{\left(   \begin{matrix} %
      1&x&z\\0&1&y\\0&0&1
   \end{matrix}\right):x,y,z\in\R\right\}=\exp({\mf h}_3)$$
with ${\bf u}=\left(\begin{matrix} %
      0&1&0\\0&0&0\\0&0&0
   \end{matrix}\right)$, ${\bf v}=\left(\begin{matrix} %
      0&0&0\\0&0&1\\0&0&0
   \end{matrix}\right)$, $[{\bf u},{\bf v}]=\left(\begin{matrix} %
      0&0&1\\0&0&0\\0&0&0
   \end{matrix}\right)$. Setting $H=\left\{\left(   \begin{matrix} %
      1&0&z\\0&1&y\\0&0&1
   \end{matrix}\right):y,z\in\R\right\}$, we have $G=H\exp(\R{\bf u})$. In these coordinates, we have $R({\bf u})=\partial_x$ and $R({\bf v})=\partial_y+x\partial_z$. The associated Bessel-type process has generator
   $$\frac 12\partial_x^2+\frac{\delta-1}{2x}\partial_x+(\partial_y+x\partial_z)$$
with singularity along $H$.

\subsubsection{Lifting}\label{ssec:lifting}

We start by considering the smooth operator $\frac 12\partial_{xx}+Z$ (excluding for now the singular part $(\frac{\tau}{2(y-x)}+\cdots)\partial_y$). We focus on the neigborhood of a given point on $\Delta$; up to translating variables we may assume $(x,{\bf z})=(0,{\bf 0})$ at that point. We assume that $\frac 12\partial_{xx}+Z$ satisfies the H\"ormander bracket condition, i.e. $\partial_x,Z$ and their iterated brackets up to order $m$ ($m$ a fixed integer) span the tangent space $\R^{n+1}$ at every point in a neighborhood of $(0,{\bf 0})$.

At small scale, an elliptic operator is ``well approximated" by a constant coefficient operator, viz. an operator invariant under translations and homogeneous under (isotropic) dilations. As is well-known \cite{Goodman_nil,RothStein}, hypoelliptic operators at small scale are ``well approximated" by operators invariant under translations corresponding to a non-abelian, nilpotent Lie group structure; and homogeneous under some anisotropic dilations. We review the argument in the present case (following Chapter II of \cite{Goodman_nil}); we will then need to take into account the singular component.

Let us start with a simple example in order to illustrate the difficulty and method. The operator
$$U^2+V=\partial_{xx}+\partial_{z_1}+x\partial_{z_2}+(x^3+z_1)\partial_{z_3}+(x^4+z_2)\partial_{z_4}$$
is hypoelliptic near ${\bf 0}$ (as $U,V,[U,V],[U,[U,[U,[U,V]]]]$ span the tangent space). Changing variable from $z_3$ to $z_3-\frac 12z_1^2$ and scaling $x\leftarrow tx,z_1\leftarrow t^2z_1,z_2\leftarrow t^3z_2,z_3\leftarrow t^5z_3,z_4\leftarrow t^5z_4$, we obtain the scaled operator:
$$\partial_{xx}+\partial_{z_1}+x\partial_{z_2}+x^3\partial_{z_3}+z_2\partial_{z_4}$$
which is homogeneous under these dilations and hypoelliptic. Our goal now is to show that, up to changing variables and possibly adding variables and extending the operator, one obtains an operator that scales to a homogeneous hypoelliptic operator.

Let ${\mf g}_2$ be the free Lie algebra on two generators ${\bf u},{\bf v}$ (its universal enveloping algebra is the algebra of polynomials in two free, non-commuting variables). Consider the grading on ${\mf g}_2=\bigoplus {\mf g}_2^{(n)}$ specified by ${\mf g}_2^{(1)}=\R{\bf u}$, ${\mf g}_2^{(2)}=\R{\bf v}$ (consequently ${\mf g}_2^{(3)}=\R[{\bf u},{\bf v}]$, ${\mf g}_2^{(4)}=\R[{\bf u},[{\bf u},{\bf v}]]$, ${\mf g}_2^{(5)}=\R[{\bf u},[{\bf u},[{\bf u},{\bf v}]]]\oplus\R[{\bf v},[{\bf u},{\bf v}]]$ etc). Let ${\mf g}={\mf g}_2/\bigoplus_{n>2m+1}{\mf g}_{2}^{(n)}$; it is a (finite-dimensional) nilpotent Lie algebra (all iterated brackets of degree $>m$ vanish), and has a basis indexed by well-chosen set of words in ${\bf u},{\bf v}$ (via ${\omega}_1\dots \omega_{k}\rightarrow [\omega_1,[\omega_2,\dots, [\omega_{k-1},\omega_k]\dots]$, $\omega_i\in\{{\bf u},{\bf v}\}$). The algebra ${\mf g}$ is called a free nilpotent Lie algebra of type II in \cite{RothStein}.

Consider the dilation $\delta_t:{\mf g}\rightarrow{\mf g}$ given by $\delta_t(w)=t^kw$ on ${\mf g}^{(k)}$. A function $f$ defined near ${\bf 0}\in{\mf g}$ is of order $k$ if $t^{-k}f(\delta_t w)$ is bounded uniformly in $(t,w)\in (0,
\infty)\times{\mf g}$ close enough to $(0,{\bf 0})$; let $V_k$ denote germs of functions of order $k$. A vector field on ${\mf g}$ defined near ${\bf 0}$ is of order $\leq\ell$ if it maps $V_{k+\ell}$ to $V_k$ for $k\geq 0$. In coordinates, if $w_1,\dots,w_d$ are coordinates on ${\mf g}$ corresponding to a graded basis as described above, $\sum f_i
\partial_{w_i}$ is of order $\leq \ell$ if $f_i$ is of order $\geq \ell+\deg(w_i)$ for $i=1,\dots,d$. Let $L_\ell(U_0)$ denote the space of vector fields of order $\ell$, $U_0$ a neighborhood of the origin in ${\mf g}$. Trivially $[L_\ell(U_0),L_{\ell'}(U_0)]\subset L_{\ell+\ell'}(U_0)$.

Via the exponential map, one may identify ${\mf g}$ with the corresponding simply-connected Lie group $G$; and $w\in{\mf g}$ with the left-invariant vector field $R(w)$:
$$(R(w)f)(g)=\lim_{t\rightarrow 0}\frac{f(ge^{tw})-f(g)}t$$
in such a way that $[R(w_1),R(w_2)]=R([w_1,w_2])$.

Let $L(U)$ be the Lie algebra of (smooth) vector fields in $U$, a neighborhood of ${\bf 0}$ in $\R^{n+1}$. Consider the linear map $\lambda:{\mf g}\rightarrow L(U)$ given by $\lambda([w_1,\dots, [w_{k-1},w_k]\dots])=[\tilde w_1,\dots [\tilde w_{k-1},\tilde w_k]\dots]$ where $w_i={\bf u}$ or ${\bf v}$ and accordingly $\tilde w_{i}=\partial_x$ or $Z$, whenever the degree of that bracket is $\leq m$. In other words (in the terminology of \cite{Goodman_nil}), the map $\lambda$ is the partial Lie algebra homomorphism mapping ${\bf u}$ to $\partial_x$ and ${\bf v}$ to $Z$.

Let $\phi: U_0\rightarrow U, u\mapsto e^{\lambda(u)}{\bf 0}$, where $U_0$ is a small enough neighborhood of ${\bf 0}$ in ${\mf g}$ (here $e^{\lambda(u)}{\bf 0}$ is the element of $\R^{n+1}$ obtained by starting from ${\bf 0}\in\R^{n+1}$ and flowing along the vector field $\lambda(u)\in L(U)$ up to time 1). Then $\phi$ is smooth; and $d\phi({\bf 0})$ is surjective (from the H\"ormander bracket condition $\partial_x,Z$ and their brackets up to order $m$ span the fiber of $L(U)$ at ${\bf 0}$). Up to shrinking $U_0$ and $U$, one may assume that $\phi$ is a submersion.

The key Lifting Theorem (\cite{RothStein}, II.1.3 in \cite{Goodman_nil}) asserts the existence of a linear map $\Lambda: {\mf g}\rightarrow L(U_0)$ s.t.: $\Lambda(w)(f\circ\phi)=(\lambda(w)f)\circ\phi$ for $w\in{\mf g}$, $f\in C^\infty(U)$ (intertwining); if $w\in{\mf g}^{(k)}$, $\Lambda(w)=R(w)\mod L_{k-1}(U_0)$ (approximation); and $\Lambda$ surjective at $0$. For concreteness, we will now express this result in coordinates.

Since $\phi$ is a submersion, we can find coordinates $x,z_1,\dots,z_n,z_{n+1},\dots,z_{n+k}$ s.t. 
$$\phi(x,z_1,\dots,z_n,z_{n+1},\dots,z_{n+k})=(x,z_1,\dots,z_n)$$
locally near ${\bf 0}$. Set $\tilde X=\Lambda({\bf u})$, $\tilde Z=\Lambda({\bf v})$, vector fields on $U_0$. By the intertwining condition we have $\tilde X=\partial_x+\sum_{j>n} f_j\partial_{z_j}$ and $\tilde Z=Z+\sum_{j>n} g_j\partial_{z_j}$. 

By the approximation condition we have $\tilde X=R({\bf u})\mod L_0(U_0)$; and $\tilde Z=R({\bf v})\mod L_1(U_0)$. Moreover $R({\bf u})\in L_1(U_0)$, $R({\bf v})\in L_2(U_0)$ (\cite{Goodman_nil}, II.2.2). 
We identify ${\mf g}$ with tangent vectors at ${\bf 0}$ on $U_0$. Let $(w_0,w_1,\dots,w_N)$ be coordinates on ${\mf g}$ relative to a graded basis, so that $\partial_0$ spans ${\mf g}^{(1)}$, $\partial_{1}$ spans ${\mf g}^{(2)}$, $\partial_4,\partial_5$ span ${\mf g}_2^{(5)}$ etc; this assigns a degree to each variable (e.g. $\deg(w_4)=\deg(w_5)=5$). In these coordinates (and up to scaling $w_0$ and $w_1$) we have
\begin{align*}
\tilde X&=g_0\partial_0+\sum_{i>0} g_i\partial_{i}\\
\tilde Z&=f\partial_0+\sum_{i>0} h_i\partial_{i}
\end{align*}
with $g_0({\bf 0})=h_1({\bf 0})=1$, $g_i\in V_{\deg(w_i)-1}$, $h_i\in V_{\deg(w_i)-2}$.

For a vector field $Y$, let $\delta_t Y$ be the vector field s.t. $((\delta_t Y)f)\circ\delta_t=Y(f\circ\delta_t)$. We say that $Y$ is homogeneous of degree $k$ if $\delta_tY=t^k Y$; in coordinates $Y=\sum e_i\partial_{w_i}$ with $e_i$ homogeneous of degree $k+\deg(w_i)$. More generally, one can define homogeneous differential operators and their degrees in such a way that the degree of the composition of operators is the sum of the degrees. In particular $R(w)$ is homogeneous of degree $k$ if $w\in {\mf g}^{(k)}$.

We may rewrite the approximation condition as $t^{-1}\delta_t\tilde X=R({\bf u})+O(t)$, $t^{-2}\delta_t\tilde Z=R({\bf v})+O(t)$ (e.g. in the sense of uniform convergence of all derivatives of coefficients on compact subsets) and
$$t^{-2}\delta_t \left(\frac 12\tilde X^2+\tilde Z\right)=\frac 12R({\bf u})^2+R({\bf v})+O(t)$$

The homogeneous, translation invariant operator $\frac 12R({\bf u})^2+R({\bf v})$ satisfies the H\"ormander bracket condition (since ${\mf g}$ is generated by ${\bf u},{\bf v}$). In summary, we have the following (\cite{Goodman_nil,RothStein}):

\begin{Lem}\label{Lem:hyponil}
Let ${\mc M}_0=\frac 12\partial_{xx}+\sum_{i=1}^n c(x,{\bf z})\partial_{z_i}=\frac 12X^2+Z$ be s.t. $X,Z$ and their iterated brackets span the tangent space at ${\bf 0}$. Then there is $N\geq n$, a submersion $\phi:\R^{N+1}\rightarrow\R^{n+1}$ defined in a neighborhood $U_0$ of ${\bf 0}$ s.t. $\phi({\bf 0})={\bf 0}$; and an operator $\tilde{\mc M}_0=\frac 12\tilde X^2+\tilde Z$ hypoelliptic near $0$ s.t. $\tilde{\mc M}_0(f\circ\phi)=({\mc M}_0f)\circ\phi$. 
Moreover one can assign a degree $\deg(w_i)$ to each coordinate $w_i$ of $\R^{N+1}$ with $\deg(w_0)=1$ in such a way that $t^{-2}\delta_t\tilde{\mc M}_0={\mc M}_0^s+O(t)$, where
${\mc M}_0^s=\frac 12R({\bf u})^2+ R({\bf v})$ is homogeneous, hypoelliptic, and left-invariant under translations relative to a nilpotent Lie group structure on $\R^{N+1}$.
\end{Lem}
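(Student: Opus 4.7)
The plan is to follow the Rothschild--Stein lifting construction already laid out in Section \ref{ssec:lifting}, treating their lifting theorem as a black box and then verifying the three claims (intertwining, hypoellipticity, scaling).

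First I would set up the algebra. Let $m$ be an integer such that $X=\partial_x$, $Z$, together with their iterated brackets of length at most $m$, span the tangent space at $\mathbf 0$ (such $m$ exists by the H\"ormander hypothesis, and by continuity the spanning persists in a neighborhood of $\mathbf 0$). Take ${\mf g}$ to be the free nilpotent Lie algebra on two generators ${\bf u},{\bf v}$, graded by $\deg({\bf u})=1$, $\deg({\bf v})=2$, and truncated so that brackets of weight $>2m+1$ vanish; then ${\mf g}=\bigoplus_k {\mf g}^{(k)}$ is finite dimensional, with a distinguished graded basis. Let $\lambda:{\mf g}\to L(U)$ be the partial Lie algebra homomorphism determined by ${\bf u}\mapsto X$, ${\bf v}\mapsto Z$, and define $\phi(u)=e^{\lambda(u)}\mathbf 0$ on a neighborhood $U_0$ of $0$ in ${\mf g}\simeq\R^{N+1}$. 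Since $d\phi(0)$ generates the subspace of $T_{\mathbf 0}\R^{n+1}$ spanned by $\lambda$ applied to the basis elements, i.e.\ by all iterated brackets of $X,Z$ of weight $\leq m$, the H\"ormander hypothesis gives that $\phi$ is a submersion after shrinking $U_0$.

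The heart of the matter, and the expected main obstacle, is the Rothschild--Stein lifting theorem (Theorem II.1.3 in \cite{Goodman_nil}): there exists a linear map $\Lambda:{\mf g}\to L(U_0)$ such that (i) \emph{intertwining} $\Lambda(w)(f\circ\phi)=(\lambda(w)f)\circ\phi$ for every $w\in{\mf g}$ and $f\in C^\infty(U)$; (ii) \emph{approximation} $\Lambda(w)-R(w)\in L_{k-1}(U_0)$ whenever $w\in {\mf g}^{(k)}$; and (iii) $\Lambda$ surjects onto the tangent space at each point of $U_0$. I would invoke this theorem; the proof in \cite{RothStein,Goodman_nil} proceeds by choosing exponential coordinates of the second kind adapted to the grading and correcting inductively the higher-order error terms. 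Set $\tilde X=\Lambda({\bf u})$, $\tilde Z=\Lambda({\bf v})$, and $\tilde{\mc M}_0=\tfrac12\tilde X^2+\tilde Z$.

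It remains to verify the conclusions. The intertwining identity $\tilde{\mc M}_0(f\circ\phi)=({\mc M}_0 f)\circ\phi$ is immediate from (i), applied to ${\bf u}$ twice (for $\tfrac12\tilde X^2$) and to ${\bf v}$. For hypoellipticity: iterated brackets of $\tilde X,\tilde Z$ correspond via (ii) to $R$ of the matching iterated brackets of ${\bf u},{\bf v}$, modulo strictly lower-order vector fields; as ${\bf u},{\bf v}$ generate ${\mf g}$ and the $R(w)$ span the tangent space everywhere (${\mf g}$ acts freely on itself by left translation), the brackets of $\tilde X,\tilde Z$ span $T_w\R^{N+1}$ at every $w\in U_0$ (shrinking $U_0$ if necessary), and H\"ormander's theorem applies. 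For the scaling statement, $R(w)$ is homogeneous of degree $k$ under $\delta_t$ for $w\in{\mf g}^{(k)}$, and the order estimate in (ii) rewrites as
\begin{equation*}
t^{-1}\delta_t\tilde X=R({\bf u})+O(t),\qquad t^{-2}\delta_t\tilde Z=R({\bf v})+O(t),
\end{equation*}
uniformly on compacts of $U_0$ (together with all derivatives of coefficients). Consequently
\begin{equation*}
t^{-2}\delta_t\tilde{\mc M}_0=\tfrac12 R({\bf u})^2+R({\bf v})+O(t)={\mc M}_0^s+O(t).
\end{equation*}
The limit ${\mc M}_0^s$ is by construction left-invariant on the simply-connected Lie group $G=\exp({\mf g})$, homogeneous of degree $2$, and hypoelliptic by the same bracket argument, closing the proof modulo the imported lifting theorem.
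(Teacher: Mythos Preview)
Your proposal is correct and follows essentially the same route as the paper: the lemma is stated there as a summary of the Rothschild--Stein lifting construction, with the discussion in Section~\ref{ssec:lifting} (free nilpotent algebra on ${\bf u},{\bf v}$ with $\deg({\bf u})=1$, $\deg({\bf v})=2$; the map $\phi(u)=e^{\lambda(u)}\mathbf 0$; invocation of the lifting theorem to produce $\Lambda$; then the scaling computation $t^{-2}\delta_t\tilde{\mc M}_0\to\frac12R({\bf u})^2+R({\bf v})$) serving as its proof, and you reproduce exactly that outline while treating the lifting theorem itself as a black box.
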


We now want to adapt this classical result on local models of hypoelliptic operators with smooth coefficients to the present situation, viz. in the presence of a singular hyperplane.

We start by extending the Lie algebra ${\mf g}$ to the graded, nilpotent Lie algebra $\hat{\mf g}={\mf g}\bigoplus\R{\bf c}$ where ${\bf c}$ has degree 1 and is central (viz. $[{\bf c},w]=[{\bf u},w]$ for all $w\in{\mf g}$); $\hat{\mf g}$ is a trivial central extension of ${\mf g}$ and exponentiates to a Lie group $\hat G\simeq G\times\R$, into which $G$ is naturally embedded. A generic element of $\hat{\mf g}$ is written as $w_0{\bf u}+y{\bf c}+w_1{\bf v}+\cdots$. 

We extend the submersion $\phi:(w_0,w_1,\dots,w_N)\mapsto (x,z_1,\dots,z_n)=(\phi_0(w_0,\dots),\phi_1(w_0,\dots,),\dots)$ by 
$$\hat\phi:(w_0,y,w_1,\dots,w_N)\mapsto (x,y,z_1,\dots,z_n)=(\phi_0(w_0,w_1,\dots),y,\phi_1(w_0,w_1,\dots),\dots)$$
We write $\hat X=\tilde X+0\partial_{w'_0}$ ($\hat X$ is a vector field defined near the origin of $\R^{N+2}$; $\tilde X$ is a vector field on $\R^{N+1}$) so that $\hat X$ and $\partial_x$ are intertwined by $\hat\phi$.

Set $Y=\left(\frac{\tau}{2(y-x)}+b(x,y,{\bf z})\right)\partial_y$ and $\hat Y=\left(\frac{\tau}{2(y-\phi_0(w_0,\dots))}+b\circ\hat\phi\right)\partial_y$; then $Y$ and $\hat Y$ are intertwined by $\hat\phi$. Similarly, set $v=\left(-\frac{\tau h}{(x-y)^2}+\frac{d(x,y,{\bf z})}{y-x}\right)$ and $\hat v=v\circ\hat\phi$. Then
$${\mc M}=\frac 12\partial_x^2+Y+Z+v{\rm\ and\ }\hat{\mc M}=\frac 12\hat X^2+\hat Y+\hat Z+\hat v$$
are intertwined by $\hat\phi$.

We consider the behavior of $\hat{\mc M}$ under scaling, where now
$$\delta_t(w_0,y,w_1,\dots)=(tw_0,ty,t^2w_1,\dots,t^{\deg(w_i)}w_i,\dots)$$
By Lemma \ref{Lem:hyponil}, we already know that $\hat{\mc M}_0=\frac 12\hat X^2+\hat Z$ scales to $\hat{\mc M}_0^s=\frac 12R({\bf u})^2+R({\bf v})$. We have $\hat X\phi_0=\hat X(x\circ\hat\phi)=(\partial_xx)\circ\hat\phi=1$ and $\hat X=\partial_{w_0}+o(1)$ with our choice of coordinates. Then 
$$\phi_0\circ\delta_t=tw_0+O(t^2)$$
Consequently, $t^{-2}\delta_t\hat{\mc M}=\hat{\mc M}^s+O(t)$ where
$$\hat{\mc M}^s=\frac 12R({\bf u})^2+R({\bf v})+\frac{\tau}{2(y-w_0)}\partial_y-\frac{\tau h}{2(y-w_0)^2}$$

Manifestly $\partial_y$ is invariant by left translations by elements of $G$ (and so are $R({\bf u}),R({\bf v})$). Let $G^{\geq 2}=\exp({\mf g}^{(\geq 2)})$ be the subgroup generated by elements of degree ${\geq 2}$. Then $R(w)w_0=0$ for all $w\in G^{\geq 2}$ (e.g. from the Campbell-Hausdorff formula); and $R({\bf u})w_0=1$. Consequently $\hat{\mc M}^s$ is invariant under left translations by elements of type $(y,y,w_1,\dots)$, which generate a codimension one subgroup $G_0$. Together with dilations, these translations operate transitively on $\hat G\setminus G_0$.

Finally we observe that $\hat{\mc M}^s$ satisfies itself the H\"ormander bracket condition. Indeed $R({\bf u}),R({\bf v})$ and their brackets generate ${\mf g}$; $\hat Y^s=\frac{\tau}{2(y-w_0)}\partial_y$ commutes with ${\mf g}^{(\geq 2)}$. The iterated brackets of $R({\bf u})$ and $\hat Y^s$ are of type:
$$[R({\bf u}),[R({\bf u}),\dots, [R({\bf u}),\hat Y^s]\dots]\propto\frac{1}{(y-w_0)^{k+1}}\partial_y$$
so that for $k$ large enough $[R({\bf u}),[R({\bf u}),\dots, [R({\bf u}),R({\bf v})+\hat Y^s]\dots]\propto\frac{1}{(y-w_0)^{k+1}}\partial_y$.
 
Without loss of generality, we will assume in what follows that ${\mc M}$ is of the same type as $\hat{\mc M}$, i.e. scales to a homogeneous hypoelliptic operator invariant under a codimension 1 group of translations (changing $n$ to $N$ and renaming $(w_0,w_0',w_1,\dots)$ to $(x,y,z_1,\dots)$ if necessary).

\subsubsection{Harnack estimate}

We consider an operator ${\mc M}$ as obtained in Section \ref{ssec:lifting}, which is defined in a space diffeomorphic to $(0,\infty)\times\R^{n+1}$, with a singularity along the boundary $S$.

The goal here is to establish the following: if $f$ is s.t. ${\mc M}f=0$ in a neighborhood $U$ of a point of the singular boundary $S$ (which we may choose to be ${\bf 0}$) and if $f$ has polynomial growth, viz.
$$f=O(\dist(.,S)^{-M})$$
for some $M>0$, then $f$ is mild in the sense that for all $k\geq 0$, we have
$$\nabla^k f=O(\dist(.,S)^{-M_k})$$
for some $M_k>0$. 

From the classical hypoelliptic Harnack inequality of Bony \cite{Bony_max}, we have that if $K,K'$ are compact neighborhoods in $U$, $K\subset\subset K'\subset\subset U$, then there is $c=c_k(K,K')$ s.t.
\begin{equation}\label{eq:Bony_Harnack}
\sup_{K}|\nabla^k f|\leq c_k(K,K')\sup_{K'} |f|
\end{equation}
for all $f$ with ${\mc M}f=0$ in $K'$. 

The claim follows easily for the homogeneous, translation invariant operator ${\mc M}^s$ (in that case, $S=G_0$, a codimension 1 subgroup of $\hat G$). Indeed, let $V\subset\subset U$ be a neighborhood of ${\bf 0}$; for fixed compact neighborhoods $K,K'$ as above, we can find a neighborhood of ${\bf 0}$ contained in $V$ and covered by translates and dilations of $K$: $\cup_{\alpha\in A}\delta_{t_\alpha}(h_\alpha K)$, with the $t_\alpha$'s positive and bounded and the $h_\alpha$'s bounded in $G_0$, and $\cup_{\alpha\in A}\delta_{t_\alpha}(h_\alpha K')\subset U$. We have 
\begin{equation}\label{eq:harnackscal}
\sup_{\delta_t(hK)}|\nabla^k f|\leq C t^{-km}\sup_K|\nabla^k(f\circ\delta_t\circ\tau_h)|
\end{equation}
with $m=\max_i\deg(w_i)$ and $h$ close to the identity in $G_0$ ($\tau_h$ denotes the left translation by $h$, which commutes with ${\mc M}^s$). It follows directly that if $f=O(\dist(.,G_0)^{-M})$ and ${\mc M}^sf=0$, then $\nabla^k f=O(\dist(.,G_0)^{-M-mk})$.
 
 Going back now to a general ${\mc M}$, we need a small refinement of \eqref{eq:Bony_Harnack}: if ${\mc L}_t$ is a family of hypoelliptic operators satisfying H\"ormander's condition and depending smoothly on $t$ (a vector of parameters), for $K\subset\subset K'$ compact sets and $k\geq 0$ and $f$ s.t. ${\mc L}_tf=0$ in $K'$, we have
 \begin{equation}\label{eq:paramharnack}
 \sup_{K}|\nabla^k f|\leq c(k,K,K',t)\sup_{K'} |f|
 \end{equation}
 with $t\mapsto c(k,K,K',t)$ locally bounded.

To see this, we reason by  contradiction and assume that $(f_n)$ is a bounded sequence of smooth functions in $K'$ and $(w_n)$ is a sequence in $K$ s.t. ${\mc L}_nf_n=0$ in $K'$, where ${\mc L}_n={\mc L}_{t_n}$, $t_n\rightarrow 0$, $w_n\rightarrow w$, and $|\nabla^k f_n(w_n)|\nearrow\infty$. If $\phi$ is a smooth cutoff function supported in $K'$ s.t. $\phi=1$ in $K$, we may assume (up to extracting a subsequence) that $\phi f_n$ and its first-order derivatives converge in the Sobolev space $H_s(\R^{n+2})$ for some $s<0$. In order to obtain the contradiction it suffices to show that the convergence occurs in $H_{s'}$ for arbitrarily large $s'>0$, and consequently in $C_k$ for any $k\geq 0$.

We shall use the H\"ormander subelliptic estimate (\cite{Horm_hypo}, Theorem 7.4.17 in \cite{Stroock_PDE})
$$\|\phi_1u\|_{s+\delta_0}\leq c_s\left(\|\phi_2{\mc L}_nu\|_{s}+\|\phi_2u\|_{s}\right)$$
for $K_1\subset\subset K_2$, where $\delta_0>0$ and $\phi_1,\phi_2\in C^\infty_c$ are s.t. $\phi_2=1$ on an open neighborhood of $supp(\phi_1)$, $\phi_1=1$ on $K_1$. Here $\|.\|_s$ denotes the norm of the Sobolev space $H_s$. %
A careful examination of the argument shows that $\delta_0$ and $c_s$ may be chosen independently of $n$ for $n$ large enough: indeed $\delta_0$ depends on the maximal number of brackets needed to generate the tangent space at every point in $supp(\phi_1)$; $c_s$ depends on the Sobolev norms of the coefficients of ${\mc L}_n$ (with cut-offs).

We apply the estimate to $u=f-f_n$, $f$ the limit of $\phi f_n$. Clearly $f$ is a weak solution of ${\mc L}_0f=0$ in $K_1$, and thus (H\"ormander) is smooth. We have ${\mc L}_n(f-f_n)=({\mc L}_n-{\mc L}_0)f$ in $K'$ and consequently $\|\phi_2{\mc L_n}(f-f_n)\|_{s'}=O(t_n)$ for any $s'$. By the subelliptic estimate, this shows that $\phi_1f_n$ converges in $H_{s+\delta_0}$; by bootstrapping the argument (taking cutoff functions with slightly smaller support at each step), one obtains convergence in $H_{s'}$ for arbitrarily large $s'$, as desired; this gives \eqref{eq:paramharnack}.

We go back now to the operator ${\mc M}$ and consider the parametric family of operators ${\mc M}_t=t^{-2}\delta_t{\mc M}$ with ${\mc M}_0={\mc M}^s$. By the earlier scaling argument \eqref{eq:harnackscal} together with \eqref{eq:paramharnack}, we obtain:
$$|\nabla^k f|(x)=O(|x|^{-M_k})$$ 
if ${\mc M}f=0$ near $0$, $f=O(\dist(.,S)^{-M})$ and $x\in\cup_{t>0}\delta_tK$, $K$ a fixed compact set. This gives the desired estimate when one approaches the singularity ${\bf 0}$ in a ``cone" $\cup_{t>0}\delta_tK$ (this is not a cone in the usual affine sense, since the $\delta_t$'s are not standard dilations).

Finally  we argue that this last estimate is locally uniform in the apex. Indeed, take a point ${\bf v}$ nearby ${\bf 0}$ on the singular hypersurface. We can go through the lifting argument, centered now at ${\bf v}$. The resulting scaled operator is still ${\mc M}^s$ (it depends solely on the number of brackets needed to generate the tangent space and the value of $\tau$). More precisely, we have a diffeomorphism $\phi_{\bf v}$ mapping ${\bf 0}$ to ${\bf v}$ s.t. $({\phi_{\bf v}})^*{\mc M}$ has the same type of scaling as ${\mc M}$, viz. ${\mc M}_{{\bf v},t}=\delta_t(\phi_{\bf v})^*{\mc M}=t^2{\mc M}^s+O(t^3)$. From the construction of the lifting theorem, it is clear that $\phi_{\bf v}$ can be chosen smoothly in ${\bf v}$ for ${\bf v}$ near ${\bf 0}$. Consequently the family $({\mc M}_{{\bf v},t})_{{\bf v},t}$ is smooth (away from the singular hypersurface). By applying \eqref{eq:paramharnack} to the family ${\mc M}_{{\bf v},t}$ and covering a neighborhood of ${\bf 0}$ by subsets of type $\phi_{\bf v}(\delta_tK)$, we obtain
$$|\nabla^k f|(x)=O(|x|^{-M_k})$$ 
if ${\mc M}f=0$ near $0$, $f=O(\dist(.,S)^{-M})$ and $x$ is near $0$. This concludes the proof of Lemma \ref{Lem:mild}.

\subsection{Regularity}

In this section the goal is to establish the following

\begin{Lem}\label{Lem:reg}
Let ${\mc M}$ be as in \eqref{eq:diffop0}. If $f$ is s.t. ${\mc M}f=0$ in $U\setminus\Delta$; $f$ is smooth and mild; and $f=O(\dist(.,\Delta)^{\alpha_-+\eps})$ for some $\eps>0$, then $f$ extends to a smooth function on $U$.
\end{Lem}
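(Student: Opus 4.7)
The plan is to combine a Feynman--Kac representation for $f$ with sharp Bessel-type estimates in the $v = y - x$ direction, using the hypothesis $f = O(v^{\alpha_- + \eps})$ to rule out the singular branch of the formal Frobenius dichotomy $v^{\alpha_-} g_- + v^{\alpha_+} g_+$. Having reduced to $\tilde\rho = 0$, the drift of $Y_t - X_t$ comes solely from the $\frac{\tau}{2v}\partial_y$ term, so $V_t := Y_t - X_t$ is a perturbed Bessel process of dimension $\delta = 1 + \tau$ driven by the same Brownian motion as $X$, while the singular potential $-\frac{\tau h}{2v^2} + \frac{d}{v}$ is packaged into a Feynman--Kac multiplicative functional $M_t$. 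The previously established mildness (Lemma \ref{Lem:mild}) supplies the polynomial a priori bounds on $\nabla^k f$ that will later permit commuting differentiation with expectation.

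Concretely, I would fix a relatively compact $U_1\subset\subset U$ whose ``outer'' boundary $\partial U_1\setminus\Delta$ is smooth and transverse to $\Delta$, let $\tau_1 = \tau_1^w$ be the exit time of the diffusion $W^w$ generated by $\mc{M}$ (potential absorbed into $M$) from $U_1$, and write, for $w \in U_1\setminus\Delta$,
$$ f(w) = \E^w\!\left[ f(W_{\tau_1})\,M_{\tau_1} \right]. $$
Starting $W$ from $w_0\in\Delta$ is still meaningful since a Bessel process of dimension $\delta > 0$ enters $(0,\infty)$ instantly, while the transverse hypoelliptic component $\tfrac12\partial_x^2 + Z$ produces a smooth flow by the lifting construction of Section \ref{ssec:lifting}. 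The role of the hypothesis $f = O(v^{\alpha_- + \eps})$ is to guarantee that $f(W_{\tau_1})\,M_{\tau_1}$ is integrable uniformly in $w$ up to $\Delta$: the $v^{-2}$ singularity of the potential is exactly counterbalanced by the Bessel entrance law, and the critical decay exponent above which integrability fails is precisely $\alpha_-$ (this is the probabilistic incarnation of the indicial equation). This yields a continuous extension of $f$ to $\Delta$ together with the identity $f(w_0) = \E^{w_0}[f(W_{\tau_1})\,M_{\tau_1}]$.

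To upgrade continuity to $C^\infty$-regularity, I would differentiate the representation in $w$ through the stochastic flow $w\mapsto W^w_\cdot$; interior smoothness of $f$ and Lemma \ref{Lem:mild} make this legitimate, while the parametric hypoelliptic Harnack estimate \eqref{eq:paramharnack}, transported through the scaling $\delta_t$ of Section \ref{ssec:lifting}, delivers uniform bounds on $\nabla^k f$ up to $\Delta$. A final bootstrap uses $\mc{M} f = 0$ to solve algebraically for $\partial_{xx} f$ in terms of tangential derivatives (and of $\partial_y f$, itself controlled from the Bessel flow), and thereby propagates smoothness in the normal direction across $\Delta$. I expect the main obstacle to be this uniform integrability of $M_{\tau_1}$ as $w \to \Delta$: it is exactly there that the delicate balance between the $v^{-2}$ potential, the Bessel entrance law, and the hypothesized decay $f = O(v^{\alpha_- + \eps})$ is entirely consumed, and a sharper version of this balance is presumably what would be needed to handle the non-resonating improvement $f = O(v^{-M})$ alluded to in the remark following the statement.
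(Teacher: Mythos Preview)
Your overall architecture---Feynman--Kac representation, stochastic flow differentiation, then a bootstrap in the normal direction---matches the paper's. But there is a substantive gap at the very first step: you keep the full potential $-\tfrac{\tau h}{2v^2}+\tfrac d v$ inside the multiplicative functional $M_t$ and work with a Bessel of dimension $\delta=1+\tau$. The paper instead \emph{conjugates} by $(y-x)^{\alpha_+}$ (and a further smooth factor) before writing any SDE. This kills the $v^{-2}$ term entirely, leaving only a $\tilde d/r$ potential, and simultaneously shifts the Bessel dimension to $\delta=1+\tau+2\alpha_+=2+(\alpha_+-\alpha_-)>2$. Both effects are essential: with $\delta>2$ the radial process is transient, $\int_0^t ds/R_s^0<\infty$ a.s., and the Feynman--Kac weight $\exp(\int \tilde d/R_s\,ds)$ is bounded on the relevant stopping interval. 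By contrast your $\delta=1+\tau$ may be $\le 2$ (whenever $\tau\le 1$), so the process is recurrent or hits~$0$; and even when $\delta>2$, $\E\!\int_0^t R_s^{-2}\,ds=\infty$, so the $v^{-2}$ contribution to $M_t$ is not integrable in any naive sense. Your sentence ``the $v^{-2}$ singularity of the potential is exactly counterbalanced by the Bessel entrance law'' is precisely the statement that conjugation by $v^{\alpha_+}$ works---but you have to \emph{perform} that conjugation rather than leave the balance implicit inside an ill-controlled $M_t$.

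A second point: once conjugated, the paper separates the tangential variables $w=(y,z_1,\dots,z_n)$ from the radial variable $r=y-x$. The stochastic flow is smooth in $w$ but only $C^2$ (not $C^3$) in $r$ at $r=0$ for general $\delta>2$; hence ``differentiate through the flow'' gives at best $C^\infty$ regularity in $w$ and nothing beyond $C^2$ in $r$. The paper handles this asymmetry explicitly: flow differentiation plus mildness yields $\partial_w^k f=O(r^{-\eps})$ (their Lemma~\ref{Lem:gradgrowth}), and then regularity in $r$ is recovered by viewing $\mc L f=0$ as a parametric Bessel-type ODE in $r$ alone and integrating (their Lemma~\ref{Lem:paramODE}). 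Your ``final bootstrap'' paragraph gestures at this but does not identify the obstruction (non-smoothness of the Bessel flow in its initial point) that forces the ODE step.
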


The argument is mostly based on the analysis of a stochastic flow, itself constructed from a standard Bessel flow; and involves hypoellipticity only through the assumption that $f$ is smooth and mild.

\subsubsection{Conjugation}

We consider an operator ${\mc M}$ of type
\begin{align*}
{\mc M}&=\frac 12\partial_{xx}+a(x,y,{\bf z})\partial_x+\left(\frac{\tau}{2(y-x)}+b(x,y,{\bf z})\right)\partial_y+\sum_{i=1}^nc(x,{\bf z})\partial_{z_i}+\left(-\frac{\tau h}{(x-y)^2}+\frac{d(x,y,{\bf z})}{y-x}\right)
\end{align*}
as in \eqref{eq:diffop0}.

We proceed with factoring out the leading singularity. Let $\alpha=\alpha_{\pm}$ be a root of the indicial equation
$$\frac 12\alpha(\alpha-1)+\frac\tau 2\alpha-\tau h=0$$
and ${\mc M}_0=(y-x)^{-\alpha}{\mc M}(y-x)^{\alpha}$. Then
$${\mc M}_0=\frac 12\partial_{xx}+\left(\frac{\alpha}{x-y}+a(x,y,{\bf z})\right)\partial_x+
\left(\frac{\tau/2}{y-x}+b(x,y,{\bf z})\right)\partial_y+\sum_i c_i(x,{\bf z})\partial_{z_i}+\frac{(\alpha(a-b)+d)(x,y,{\bf z})}{y-x}
$$
It will be convenient to write the singular part in a standard Bessel form. First we change variables from $(x,y,{\bf z})$ to $(r,y,{\bf z})$, where $r=y-x$. Then 
$${\mc M}_0=\frac 12\partial_{rr}+\left(\frac{\delta-1}{2r}+(b-a)(r,y,{\bf z})\right)\partial_r+
\left(\frac{\tau/2}{r}+b(r,y,{\bf z})\right)\partial_y+\sum_i c_i(r,{\bf z})\partial_{z_i}+\frac{(\alpha(a-b)+d)(r,y,{\bf z})}{r}
$$
where $\frac{\delta-1}2=\frac\tau 2+\alpha$. If $\alpha=\alpha_+>\alpha_-$, since $\alpha_++\alpha_-=1-\tau$, we have $\delta>2$. %

Setting $h(r,y,{\bf z})=\exp\left(2\int_0^r (a-b)(s,y,{\bf z})ds\right)$, we get 
\begin{equation}\label{eq:diffopL}
{\mc L}\stackrel{def}{=}h^{-1}{\mc M}_0h=
\frac 12\partial_{rr}+\frac{\delta-1}{2r}\partial_r+
\left(\frac{\tau}{2r}+b(r,y,{\bf z})\right)\partial_y+\sum_i c_i(r,{\bf z})\partial_{z_i}+\frac{\tilde d(r,y,{\bf z})}{r}
\end{equation}
After this shift we have $\alpha_-=2-\delta<0$, $\alpha_+=0$.

\subsubsection{Flow}

We write ${\mc L}={\mc L}_0+\frac{\tilde d(r,y,{\bf z})}{r}$, where ${\mc L}_01=0$. We shall construct a process (more precisely, a flow) corresponding to the generator ${\mc L}_0$. Throughout it is crucial that $\delta>2$. We start by listing a few facts about standard Bessel flows - see \cite{RY} and \cite{Vostr_Besflow} for a detailed analysis.

Let $(B_t)_{t\geq 0}$ be a standard Brownian motion (BM) and consider the stochastic flow defined by:
\begin{equation}\label{eq:Besflow}
\left\{
\begin{array}{lll}
dR^{r}_t  &=dB_t+\frac{\delta-1}{2R^{r}_t}dt&\forall t\geq 0,r\geq 0 \\
R^r_0&=r&{\forall r\geq 0}
\end{array}
\right.
\end{equation}
Then
\begin{enumerate}
\item for fixed $r\geq 0$, \eqref{eq:Besflow} has a unique strong solution, whose law is that of a ${\rm Bes}(\delta)$ ($\delta$-dimensional Bessel process) started from $r$. It is positive for positive times and transient.
\item $(r,t)\mapsto R^r_t$ has a bicontinuous version $[0,\infty)^2\rightarrow [0,\infty)$.
\item If $r\leq r'$, a.s. $R^r_t\leq R^{r'}_t$ for all $t\geq 0$.
\item $(\lambda^{-1} R^{\lambda r}_{\lambda^2 t})_{r,t\geq 0}$ has the same law as $(R^r_t)_{r,t\geq 0}$ ($\lambda>0$ a constant)
\item a.s. for each $t>0,r\geq 0$, $\int_0^tds/R^r_s<\infty$.
\item For fixed $t$, $r\mapsto R^r_t$ is a.s. smooth on $(0,\infty)$, $C^2$ and 1-Lipschitz on $[0,\infty)$.
\end{enumerate} 
Remark that in 6., one may {\em not} replace $C^2$ by, say, $C^3$ for general $\delta>2$.

Now let us consider the stochastic flow defined (at least for short times) by
\begin{equation}\label{eq:SDEflow}
\left\{
\begin{array}{ll}
dR^{\bf v}_t  &=dB_t+\frac{\delta-1}{2R^{\bf v}_t}dt \\
dY^{\bf v}_t  &=\left(\frac{\tau}{2R^{\bf v}_t}+b({\bf V}^{\bf v}_t)\right)dt \\
d{\bf Z}^{\bf v}_t&={\bf c}({\bf V}^{\bf v}_t)dt
\end{array}
\right.
\end{equation}
where ${\bf V}_t=(R_t,Y_t,{\bf Z}_t)$, $V_0={\bf v}$. Note that the evolution of $R$ does not depend on $Y,{\bf Z}$. Thus the flow may be constructed in the following way: $(R^r_t)_{r,t}$ is a standard Bessel flow \eqref{eq:Besflow}, defined for all $r,t\geq 0$. We are in the transient case: $\delta>2$; in particular $\int_0^t\frac{ds}{R^r_s}$ is a.s. finite for all $r,t$. Given that Bessel flow, ${\bf W}=(Y-\int_0^.\frac{\tau}{2R^r_s}ds,{\bf Z})$
solves an ODE of type
\begin{equation}\label{eq:forcedODE} 
\frac{d}{dt}{\bf W}_t={\bf e}_t({\bf W}_t)
\end{equation}
with $w\mapsto {\bf e}_t(w)$ smooth (locally uniformly in $t,w$) and this ODE flow is thus well-defined for a small positive time (depending on the realization of the driving BM). Indeed, let $B_0=B_0({\bf 0},r_0)$ be a neighborbood of ${\bf 0}$ where $b,{\bf c}$ are bounded by a constant $k$; and let
$$\sigma=\inf\{s\geq 0: V^{\bf v}_s\notin B_0\}$$

Let $\tau_r^{r'}=\inf\{s\geq 0:R^r_s\geq r'\}$ which is a.s. finite (we are interested in the case where $0<r\ll r'\ll 1$). We have $\tau_r^{r'}\leq\tau_0^{r'}$; and by scaling and the Markov property we have the exponential tail estimate
$$\P(\tau_0^{r'}\geq t)\leq c\exp(-c^{-1}t/(r')^2)$$
for some $c>0$.

Taking into account the defining relation $R^r_t-r=B_t+\frac{\delta-1}2\int_0^t\frac{ds}{R^r_s}$ and the standard Gaussian tail estimate $\P(\sup_{0\leq s\leq t} |B_s|\geq k)\leq ce^{-k^2/(2t)}$ for a BM started from $0$ (e.g. from the reflection principle), we see that for $\alpha$ large enough
\begin{align*}
\P(\int_0^{\tau_0^{r'}}\frac{ds}{R^0_s}\geq \alpha r')
&\leq \P(\tau_0^{r'}\geq \alpha(r')^2)+\P(\sup_{0\leq s\leq \alpha(r')^2} |B_s|\geq c^{-1}\alpha r')\\
&\leq c(e^{-\alpha/c}+e^{-\alpha^2/c\alpha})\leq ce^{-\alpha/c}
\end{align*}
for some $c>0$ (as usual, $c$ denotes everywhere is a constant independent of parameters such as $r'$, but its exact value may change from line to line). Remark that $\int_0^{\tau_r^{r'}}\frac{ds}{R^r_s}\leq \int_0^{\tau_0^{r'}}\frac{ds}{R^0_s}$. 

On the event $\{\int_0^{\tau_0^{r'}}\frac{ds}{R^0_s}\leq r_0/2\}$ and before $\tau_r^{r'}$ and the first exit of $B_0$ by $(V^{\bf v}_t)$, we have $|Y_t-Y^{\bf v}_0|\leq kt$, $|{\bf Z}^{\bf v}_t-{\bf Z}^{\bf v}_0|\leq kt$. Consequently if ${\bf v}\in B_0/2$, $(V^{\bf v}_t)$ stays in $B_0$ at least until $t=c^{-1}r_0$. %
Before exiting $B_0$ we have
$$|{\bf V}^{{\bf v}_2}_t-{\bf V}^{{\bf v}_1}_t|\leq |{\bf v}_2-{\bf v}_1|+k\int_0^t|{\bf V}^{{\bf v}_2}_s-{\bf V}^{{\bf v}_1}_s|ds$$
if ${\bf v}_1,{\bf v}_2\in B_0/2$. Thus (Gronwall inequality)
$$|{\bf V}^{{\bf v}_2}_t-{\bf V}^{{\bf v}_1}_t|\leq |{\bf v}_2-{\bf v}_1|e^{kt}$$
and moreover 
${\bf V}^{\bf 0}_t\leq c(|R_t|+|\int_0^t ds/R^0_s|+e^{kt})$.

In summary, on an event of probability $\geq 1-c^{-1}e^{-1/cr'}$, the flow started from $B_0/2\cap ((0,r']\times\R^{n+1})$ exits $B_0\cap((0,r']\times\R^{n+1})$ on $\{r'\}\times\R^{n+1}$ and is 2-Lipschitz (at a fixed time before exiting).%

We proceed with a study of the regularity of the flow. It is manifestly not smooth in $t$ and, as pointed out earlier, it is also not smooth in $r$ at $r=0$. However it is smooth in $y,{\bf z}$, as we shall now show. 

Given a fixed realization of the Bessel flow, ${\bf W}_t=(Y^{\bf v}_t-\frac\tau 2\int_0^t\frac{ds}{R^r_s},{\bf Z}^{\bf v}_t)_{t,{\bf v}}$ is a (time-dependent) ODE flow \eqref{eq:forcedODE}. Classically \cite{Arnold_ODE}, for short times
$$\frac{d}{dt}\partial_{w_i}{\bf W}^{\bf w}_t={\rm Jac}({\bf e}_t)({\bf W}_t^{\bf w})\partial_{w_i}{\bf W}_t^{\bf w}$$ 
viz. given the flow ${\bf W}$, the Jacobian flow ${\rm Jac}({\bf W}^{\bf w}_t)$ solves a time-dependent linear ODE; by iterating the same holds for higher-order derivatives.

Moreover, $R^r_t$ and consequently $\frac{\delta-1}2\int_0^t\frac{ds}{R^r_s}=R^r_t-B_t$ have a $C^1$ dependence in $r$. Consequently ${\bf e}_t$ and its derivatives w.r.t the $w$ variables is $C^1$ in $r$. Again by standard results on parametric ODEs (\cite{Arnold_ODE}), ${\bf W}^{\bf w}_t$ and its partial derivatives w.r.t. the $w$ variables are $C^1$ in $r$.

In the following lemma we collect what we will need on the regularity of the flow up to first exit of a thin slab; specifically, we consider the flow starting in $(B_0/2)\cap([0,r]\times\R^{n+1})$ and up to exit of a mesoscopic (in the $r$ direction) slab $B_0\cap([0,r^\alpha]\times\R^{n+1})$. With high probability (w.h.p.), this occurs in time $O(r^{2\alpha-\eps})$, during which $\int_0^.ds/R_s^0$ is (w.h.p.) staying small and consequently the full flow is also small. Moreover the $C^k$ norm of the flow (in the $y,z$ directions) is bounded. 

\begin{Lem}\label{Lem:flowslab}
Fix $k\geq 0$, $\alpha\in(0,1)$; set $B_0$ a small enough neighborhood of ${\bf v}_0={\bf 0}$, $K$ large enough and $r_0>0$ small enough. Then for $r\in(0,r_0)$ small enough, on an event $E=E(r,\alpha)$ of probability $\geq 1-ce^{-1/cr^\eps}$, the flow \eqref{eq:SDEflow} has the following properties:
\begin{enumerate}
\item The flow started from $(B_0/2)\cap( [0,r]\times \R^{n+1})$ is defined up to time $t_0=r^{2\alpha-\eps}$; stays in $B_0\cap ((0,r_0]\times \R^{n+1})$ for $t\in (0,t_0]$; exits $B_0\cap ([0,r^\alpha]\times \R^{n+1})$ on $\{r^\alpha\}\times \R^{n+1}$ and before $t_0$ (first exit strictly after $t=0$). 
\item The flow has a continuous extension to $[0, t_0]\times \left((B_0/2)\cap([0,r]\times\R^{n+1})\right)$. 
\item For fixed $t=t(\omega)\leq t_0$, the flow is $C^1$ in $r$ with $C^1$ norm $\leq K$
\item For fixed $t=t(\omega)\leq t_0$, the flow is $C^k$ in ${\bf w}$ with $C^k$ norm $\leq K$
\end{enumerate} 
\end{Lem}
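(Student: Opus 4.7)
The plan is to repackage the estimates stated just above the lemma into a single high-probability event $E(r,\alpha)$ on which all four conclusions hold, with the concrete choices $r' = r^\alpha$ and $t_0 = r^{2\alpha - \eps}$. For item 1, split into two sub-events. First, apply the Bessel exit-time tail with $r' = r^\alpha$:
$$\P(\tau_0^{r^\alpha} \geq t_0) \leq c \exp(-c^{-1} r^{-\eps}).$$
By the monotonicity property $R^r \leq R^{r''}$ for $r \leq r''$, on this event every trajectory whose $R$-coordinate starts in $[0,r]$ exits $[0, r^\alpha]$ through $\{r^\alpha\}$ before $t_0$. Second, apply the integral tail $\P\bigl(\int_0^{\tau_0^{r^\alpha}} ds/R^0_s \geq r^{\alpha - \eta}\bigr) \leq c \exp(-r^{-\eta}/c)$ for some small $\eta > 0$; this bounds $\int_0^{\tau_r^{r^\alpha}} ds/R^r_s$ by $r^{\alpha - \eta}$ uniformly in the starting $r$. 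The Gronwall computation already sketched then confines $(Y^{\bf v}_t, {\bf Z}^{\bf v}_t)$ to $B_0$ for $t \leq t_0$. For item 2, combine this with the bicontinuity of $(r,t) \mapsto R^r_t$ (property 2 of Bessel flows) and the continuous dependence of the forced ODE \eqref{eq:forcedODE} on its driving data.

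For items 3 and 4, I would freeze the realization of the standard Bessel flow and view $(Y, {\bf Z})$ as the solution of the non-autonomous ODE \eqref{eq:forcedODE}. For the ${\bf w}$-regularity (item 4), differentiate successively in the ${\bf w}$-directions: the $k$-th Jacobian satisfies a linear ODE whose coefficient matrix is a polynomial in lower-order Jacobians and in derivatives of the smooth vector field ${\bf e}_t$, all bounded on $B_0$ on the event of item 1. Iterated Gronwall bounds on $[0, t_0]$ then yield the uniform $C^k$ estimate. For the $r$-regularity (item 3), the $r$-dependence enters ${\bf e}_t$ only through $R^r_\cdot$, and more specifically through the integrated drift $\frac{\tau}{2}\int_0^t ds / R^r_s = \frac{\tau}{\delta - 1}(R^r_t - r - B_t)$; property 6 of the Bessel flow gives the needed $C^1$ regularity in $r$ with $1$-Lipschitz derivative, and the same parametric ODE sensitivity argument closes the estimate.

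The main obstacle is coordinating the quantitative choices so that every individual failure probability is exponentially small in $r^{-\eps'}$ for some fixed $\eps' > 0$, and these unions still fit under $c e^{-1/cr^\eps}$. Taking $\eta > 0$ sufficiently small and absorbing it into the original $\eps$ achieves this; the various constants $c$, $K$, $r_0$ and the size of $B_0$ are fixed at the outset so that the Gronwall exponentials remain harmless on $[0, t_0]$. A secondary subtle point is that $C^2$-in-$r$ regularity of Bessel flows genuinely fails for generic $\delta > 2$, so item 3 cannot be upgraded beyond $C^1$; consequently the proof must rely solely on the first-order parametric ODE theorem for the $r$-sensitivity, which is why the two variables $r$ and ${\bf w}$ are treated separately.
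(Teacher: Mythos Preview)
Your proposal is correct and follows essentially the same route as the paper. In fact the paper does not give a separate proof of this lemma: it is stated as a summary of the flow estimates developed in the paragraphs immediately preceding it (the Bessel exit-time tail, the tail for $\int_0^{\tau_0^{r'}} ds/R_s^0$, the Gronwall confinement of $(Y,{\bf Z})$, the Jacobian linear ODE for ${\bf w}$-derivatives, and property~6 of the Bessel flow for $C^1$-in-$r$), and you have correctly identified and assembled exactly these ingredients with the choices $r'=r^\alpha$, $t_0=r^{2\alpha-\eps}$. Your remark that the $r$-regularity cannot be pushed past $C^1$ (hence the separate treatment of $r$ and ${\bf w}$) is also the point the paper makes.
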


\subsubsection{Regularity}

Here we will conclude the proof of Lemma \ref{Lem:reg}. The argument goes roughly as follows. From the continuity of the flow and a Feynman-Kac representation we obtain a continuous extension up to the boundary of a bounded $f$ s.t. ${\mc L}f=0$ (Lemma \ref{Lem:cont}). In the absence of potential and for globally bounded coefficients, we could represent $f$ at $r=0$ as a weighted average (over realizations of the flow) of $f$ at some $r=r_0>0$; the smoothness of the flow in the $y,z$ directions then gives smoothness of $f$ in these directions up to the boundary. Given this we are left with one variable, $r$, which can be dealt with in elementary fashion (Lemma \ref{Lem:paramODE}). In order to justify exchanging expectations (under the flow) and differentiation, we will need some {\em a priori} estimates, given by the mildness condition.

We denote ${\bf v}=(r,y,z_1,\dots,z_n)$. Recall ${\mc L}$ from \eqref{eq:diffopL}, and ${\mc L}={\mc L}_0+({\rm potential})$.

\begin{Lem}\label{Lem:cont}
Let $\eps>0$ be small enough and $f:(0,\ell)\times B_0\rightarrow \R$ be smooth and such that $f({\bf v})=O(r^{2-\delta+\eps})$ and 
$({\mc L}_0f)({\bf v})=O(r^{-2+\eps})$. Then $f$ extends continuously to a neighborhood of ${\bf 0}$ in $[0,\ell)\times B_0$.
\end{Lem}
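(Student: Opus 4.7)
The plan is to represent $f$ by a Dynkin (Feynman-Kac-type) formula for the stochastic flow of \eqref{eq:SDEflow} associated with $\mc L_0$, and then pass to the limit ${\bf v}\to{\bf 0}$ using the continuous extension of that flow up to the singular hypersurface $\{r=0\}$. Fix $r_1\in(0,\ell)$ small (to be chosen small relative to the diameter of $B_0$), set
\begin{equation*}
D = B_0\cap\bigl((0,r_1)\times\R^{n+1}\bigr), \qquad \sigma = \inf\{s\geq 0 : V_s\notin D\}.
\end{equation*}
Since $\delta>2$, the Bessel coordinate $R_s$ is transient, stays strictly positive for $s>0$, and a.s. hits $r_1$ in finite time, so $\sigma<\infty$. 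Because $\mc L_0\cdot 1 = 0$, Itô's formula applied to $f(V_{\cdot\wedge\sigma})$ for $V_0 = {\bf v}\in D$ gives, modulo integrability,
\begin{equation*}
f({\bf v}) = \E^{\bf v}[f(V_\sigma)] - \E^{\bf v}\!\left[\int_0^\sigma (\mc L_0 f)(V_s)\,ds\right].
\end{equation*}

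Integrability comes from standard Bessel Green's-function identities. The function $\phi(r) = 2 r^\eps/(\eps(\delta-2+\eps))$ satisfies $\tfrac12\phi'' + \tfrac{\delta-1}{2r}\phi' = r^{-2+\eps}$, so Dynkin applied to $\phi(R)$ gives $\E^r\!\left[\int_0^{\tau_r^{r_1}} R_s^{-2+\eps}\,ds\right] = \phi(r_1)-\phi(r) \leq C r_1^\eps$, uniformly in $r\in(0,r_1)$; combined with the hypothesis $|\mc L_0 f|\leq C R^{-2+\eps}$, this bounds the integral term uniformly in ${\bf v}\in D$. The analogous identity with exponent $-1$ gives $\E^r\!\left[\int_0^\sigma R_s^{-1}\,ds\right] = O(r_1)$, which, inserted in $Y_t - Y_0 = \int_0^t (\tau/(2R_s) + b(V_s))\,ds$, bounds the lateral ODE-displacement by $O(r_1)$; choosing $r_1$ small relative to the diameter of $B_0$ thus forces exit through the top face $\{r=r_1\}$ except on a low-probability event, and on the top face $R_\sigma = r_1$ so $|f(V_\sigma)|\leq C r_1^{2-\delta+\eps}$. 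On the rare sides-exit event, a Bessel scale-function estimate (using $s(x) = -x^{2-\delta}$) controls $\E^{\bf v}\!\left[R_\sigma^{2-\delta+\eps}\mathbf 1_{\text{sides}}\right]$ and shows it dominated by the smallness of the event, yielding $\E^{\bf v}[|f(V_\sigma)|]$ finite and uniformly bounded in ${\bf v}\in D$.

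The limit is taken using the flow continuity. Concatenating Lemma \ref{Lem:flowslab} across finitely many mesoscopic scales yields that ${\bf v}\mapsto (V^{\bf v}_{s\wedge\sigma})_{s\geq 0}$ extends continuously to $\{r=0\}$; hence for any ${\bf v}_n\to{\bf 0}$, the paths $V^{{\bf v}_n}_{\cdot\wedge\sigma}$ converge pathwise to $V^{\bf 0}_{\cdot\wedge\sigma}$. Since $f$ is smooth on $\{r=r_1\}$ and $\mc L_0 f$ is continuous in $D$, the integrands converge pointwise, and the integrable envelope $C R_s^{-2+\eps}$ (whose flow integral has uniformly bounded expectation) combined with $L^1$-control on the sides-exit event give convergence of both expectations by dominated convergence. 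Thus
\begin{equation*}
\lim_{{\bf v}\to{\bf 0}} f({\bf v}) = \E^{\bf 0}[f(V^{\bf 0}_\sigma)] - \E^{\bf 0}\!\left[\int_0^{\sigma} (\mc L_0 f)(V^{\bf 0}_s)\,ds\right]
\end{equation*}
exists. The same construction at a nearby boundary point $(0,y_0,z_0)$ — using the smooth dependence of the flow on the base point and joint continuity in (initial position, time, coefficients) — extends continuity to a full neighborhood of ${\bf 0}$ in $[0,\ell)\times B_0$.

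The main obstacle is the apriori growth $|f|=O(r^{2-\delta+\eps})$, which blows up at the boundary and so requires careful control of $\E^{\bf v}[f(V_\sigma)]$ on rare events where $R_\sigma$ is not essentially fixed at $r_1$. Two features make this tractable: the transience $\delta>2$ ensures both finite-time exit and that $R$ stays uniformly bounded below on high-probability events; and the strict positivity of $\eps$ in the hypotheses on $f$ and on $\mc L_0 f$ is precisely what keeps the relevant Green's-function integrals bounded as $r\to 0$. Conceptually, the hypothesis $f = O(r^{\alpha_-+\eps})$ is the minimal regularity needed to exclude an $r^{\alpha_-}$ Frobenius component and pin $f$ to the bounded $\alpha_+ = 0$ root.
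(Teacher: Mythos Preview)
Your approach matches the paper's strategy: Dynkin representation for the flow of ${\mc L}_0$, Bessel estimates to control the time-integral term, and flow continuity to pass to the boundary limit. The explicit Green's-function computation for $\E^r\!\left[\int_0^{\tau_r^{r_1}} R_s^{-2+\eps}\,ds\right]=\phi(r_1)-\phi(r)$ is clean and correct, and improves slightly on the paper's more indirect estimate for that term.

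There is, however, a genuine gap in your control of the sides-exit contribution to $\E^{\bf v}[|f(V_\sigma)|]$. You invoke a ``Bessel scale-function estimate (using $s(x)=-x^{2-\delta}$)'' to bound $\E^{\bf v}[R_\sigma^{2-\delta+\eps}{\bf 1}_{\text{sides}}]$. But the natural supermartingale argument --- $\phi(r)=r^{2-\delta+\eps}$ satisfies $\tfrac12\phi''+\tfrac{\delta-1}{2r}\phi'\leq 0$ --- yields only $\E^{r}[R_\sigma^{2-\delta+\eps}]\leq r^{2-\delta+\eps}$, which diverges as $r\to 0$. Saying the contribution is ``dominated by the smallness of the event'' is not a proof: rarity of the sides-exit event does not by itself control the expectation of an unbounded variable on it, and you have not exhibited a uniform (in ${\bf v}$) integrable majorant for $|f(V_\sigma)|$.

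The paper resolves this by inserting a separate \emph{boundedness} step before continuity. Rather than stopping at $\sigma$, it stops at $\sigma\wedge 1$. On $\{\sigma>1\}$ one bounds $|f(V_1)|$ by a moment of $(R_1^0)^{-\eta}$. On $\{\sigma\leq 1\}$ the key input is an exponential tail estimate $\P(\sigma\leq t)=O(\exp(-t^{2\eps-1}))$ (the flow cannot travel macroscopic distance in short time), combined via H\"older over dyadic shells $\{\sigma\in[2^{-k-1},2^{-k}]\}$ with the moment bounds $\|\sup_{s\in[2^{-k-1},2^{-k}]}(R_s^0)^{-\eta}\|_{L^p}=O(2^{k\eta/2})$. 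This quantitative trade-off between the smallness of $\P(\sigma\leq t)$ and the growth of $(R_\sigma)^{2-\delta+\eps}$ is exactly the missing ingredient in your sketch.

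Once $f$ is known to be bounded near ${\bf 0}$, your continuity argument via flow continuity and dominated convergence is correct and essentially what the paper does.
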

The statement is essentially sharp, as may be seen by considering $f(r)=r^{2-\delta}$ and $f(r)=\log(r)$ when ${\mc L}_0=\frac 12\partial_{rr}+\frac{\delta-1}{2r}\partial_r$ is the Bessel generator.

\begin{proof}
Throughout $c>0$ is a large constant and $\eps>0$ is a small constant whose values may change from line to line.

By Dynkin's formula we have that 
$$M^f_t({\bf v})=f({\bf V}^{\bf v}_t)-\int_0^t g({\bf V}^{\bf v}_s)ds$$
is a local martingale, where $g={\mc L}_0f$. %
By a standard scale function argument for one-dimensional diffusions (e.g. \cite{RY}), we have 
$$\P(R^r{\rm\ hits\ }r''{\rm\ before\ }r')=\frac{r^{2-\delta}-(r')^{2-\delta}}{(r'')^{2-\delta}-(r')^{2-\delta}}\sim (r''/r)^{\delta-2}$$
for $0<r''<r<r'$. Consequently (recall that $\delta>2$),
\begin{equation}\label{eq:besmom}
\E(\max_{s\leq\tau_r^{r'}} R_s^{2-\delta+\eps})=(\delta-2-\eps)\int_{r^{-1}}^\infty\P(R^r{\rm\ hits\ }s^{-1}{\rm\ before\ }r')s^{\delta-3-\eps}ds<\infty
\end{equation}
Notice also that $|\int_0^{t\wedge\tau_r^{r'}} g(R^r_s,{\bf V}^{\bf v}_s)ds|\leq c\int_0^{\tau_r^{r'}}R_s^{-2+\eps}ds$, which is integrable. Indeed, 
recall that 
$$\E((R^0_t)^{-\eta})=c_\eta t^{-\eta/2}$$ 
by scaling, with $c_\eta<\infty$ if $\eta<\delta$ (e.g. from the explicit Bessel semigroup, see \cite{RY} XI.1). 
Then $\E(\int_0^1\frac{ds}{(R^0_s)^{2-\eps}})<\infty$; and by monotonicity of the flow $\E(\int_{n}^{n+1}\frac{ds}{(R_s^0)^{2-\eps}}|\tau_r^{r'}\geq n)$ is bounded; and $\tau_{r}^{r'}$ has exponential tails, which gives $\E\int_0^{\tau_r^{r'}}R_s^{-2+\eps}ds<\infty$.

 Consequently the stopped process $t\mapsto M^f_{t\wedge\tau_r^{r'}}$ is dominated by a multiple of the integrable variable
$$\max_{s\leq\tau_r^{r'}} R_s^{2-\delta+\eps}+\int_0^{\tau_r^{r'}}R_s^{\eps-2}ds$$
and is thus uniformly integrable and a (true) martingale. Then we have the representation
\begin{equation}\label{eq:dynkin}
f({\bf v})=\E\left(f({\bf V}^{\bf v}_{\sigma})-\int_0^{\sigma} g({\bf V}^{\bf v}_s)ds\right)
\end{equation}
for any stopping time $\sigma$ anterior to the first exit of $(0,\ell/2)\times U$, $U$ a small enough neighborhood of $(y_0,{\bf z}_0)$.%

We then argue that $f$ is bounded near ${\bf 0}$. To see this, 
notice that by the Markov property, transience, scaling, and the moment estimate \eqref{eq:besmom}, it follows that
$$\E(\sup_{s\geq t}(R^0_t)^{-\eta})=c'_\eta t^{-\eta/2}$$
with $c'_\eta<\infty$ for $\eta<\delta-2$.

Then we notice that the flow is (roughly speaking) exponentially unlikely to travel at macroscopic distance in short time. Specifically,
$$\P(\sup_{0\leq s\leq t}(|B_s|+R^0_s)\leq t^{\eps})\geq 1-ce^{-c^{-1}t^{2\eps-1}}$$
and on this event $\sup_{0\leq s\leq t}|R^r_s-r|\leq ct^{\eps}$, $\sup_{0\leq s\leq t}|Y_s-Y_0|\leq ct^{\eps}$ and 
$\sup_{0\leq s\leq t}|{\bf Z}_s-{\bf Z}_0|\leq ct$. Let $\sigma$ be the time of first exit of $(0,\ell/2)\times U$ by 
the flow started anywhere in $(0,\ell/4)\times U'$, where $U'$ a neighborhood of $(y_0,{\bf z}_0)$ strongly included in $U$.
Then we have the following (rather crude but sufficient for our purposes) estimate:
$$\P(\sigma\leq t)=O(e^{-t^{2\eps-1}})$$
Going back to \eqref{eq:dynkin} for the stopping time $\sigma\wedge 1$, we have 
$${\bf v}\mapsto\E\left(\int_0^{\sigma\wedge 1} g({\bf V}^{\bf v}_s)ds\right)$$
is uniformly bounded in $(0,\ell/4)\times U'$ by the earlier domination argument; and 
\begin{align*}
|f({\bf V}^{\bf v}_{\sigma\wedge 1})|
&\leq c (R^0_{\sigma\wedge 1})^{-\eta}\\
&\leq c\sum_{k\geq 0}\ind_{\sigma\in[2^{-k-1},2^{-k}]}\sup_{2^{-k-1}\leq s\leq 2^{-k}} (R^0_s)^{-\eta}
\end{align*}
with $\eta=\delta-2-\eps$. For $p>1$, $p$ close enough to 1, we have 
$$\|\sup_{2^{-k-1}\leq s\leq 2^{-k}} (R^0_s)^{-\eta}\|_{L^p}=O(2^{k\eta/2})$$
and for $q$ the H\"older conjugate of $p$, 
$$\|\ind_{\sigma\in[2^{-k-1},2^{-k}]}\|_{L^q}=O(e^{-q^{-1}2^{k(1-2\eps)}})$$
which by \eqref{eq:dynkin} gives a uniform bound on $f$ in $(0,\ell/4)\times U'$.

Finally we check continuity at a point ${\bf v}_1$ on $\{0\}\times U'$. We consider again \eqref{eq:dynkin} at the stopping time $\tau_{0}^{r'}\wedge\sigma$, $\sigma$ the time of first exit of $(0,\ell/8)\times U''$ (here $r'\ll 1$ is to be specified, $U''\subset\subset U'$). Take $\eps_0>0$ arbitrarily small. Set $r'>0$ small enough so that $\E(\int_0^{\tau_0^{r'}}\frac{ds}{(R^0_s)^{2-\eps}})\leq c^{-1}\eps_0$; and $\P(\tau_0^{r'}\geq \sigma)\leq c^{-1}\eps_0$ for ${\bf v}$ close to ${\bf v}_1$. Then
$$|f({\bf v})-\E(f({\bf V}^{\bf v}_{\tau_0^{r'}})\phi({\bf V}^{\bf v}_{\tau_0^{r'}}))|\leq\eps_0$$
for ${\bf v}$ close enough to ${\bf v}_1$ by \eqref{eq:dynkin}, where $0\leq\phi\leq 1$ is continuous, equal to 1 in $(0,\ell/8)\times U''$ and to $0$ outside of $(\ell/4)\times U'$. Moreover, by dominated convergence (since we now know that $f$ is bounded near ${\bf v}_1$, and the flow is continuous)
$${\bf v}\mapsto\E(f({\bf V}^{\bf v}_{\tau_0^{r'}})\phi({\bf V}^{\bf v}_{\tau_0^{r'}}))$$
is continuous at ${\bf v}_1$, from which we conclude that $f$ itself is continuous at ${\bf v}_1$. (We can compactify the state space of the flow by adding a cemetery point $\partial$ and setting $(f\phi)(\partial)=0$, which deals with possible explosions of the flow prior to $\tau_0^{r'}$).  
\end{proof}

In the next lemma we exploit the fact that the flow is smooth in the $w$ directions (viz. $y,z_1,\dots,z_n$) in order to improve {\em a priori} mildness estimates.

\begin{Lem}\label{Lem:gradgrowth}
Let $U_0=(0,\ell)\times B_0$, $B_0$ a small bounded neighborhood of $(0,{\bf 0})$. Assume that $f: U_0\rightarrow\R$ is smooth and bounded, ${\mc L}f=0$ in $U_0$. Assume that the partial derivatives of $f$ w.r.t the $w$ variables up to order $k+1$ are $O(r^{-M})$ near the boundary for some $M>0$. Then the partial derivatives of $f$ w.r.t the $w$ variables up to order $k$ are $O(r^{-\eps})$ near the boundary for all $\eps>0$.
\end{Lem}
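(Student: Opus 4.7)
The plan is to exploit the $C^k$ regularity of the stochastic flow \eqref{eq:SDEflow} in the smooth $w=(y,\mathbf{z})$ directions (Lemma \ref{Lem:flowslab}) to transfer $w$-derivatives of $f$ onto the flow via a Dynkin/Feynman--Kac representation, obtaining a self-improving recursive estimate between radial scales $r$ and $r^\alpha$. Iterating this recursion turns the a priori polynomial blowup into a sub-polynomial one, giving the desired $O(r^{-\eps})$ bound.

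\textbf{Step 1: Probabilistic representation.} Fix $\alpha\in(0,1)$ close to $1$. For $\mathbf{v}=(r,w)$ with $r$ small, let $\sigma=\tau_r^{r^\alpha}$ be the first time the Bessel coordinate reaches level $r^\alpha$; crucially, $\sigma$ depends only on the driving Brownian motion and on $r$, not on $w$. Writing $\mathcal{L}=\mathcal{L}_0+\tilde d/r$, Dynkin's formula together with the Bessel moment estimates used in the proof of Lemma \ref{Lem:cont} gives
\[
f(\mathbf{v})=\mathbb{E}\bigl[f(\mathbf{V}^{\mathbf{v}}_\sigma)\bigr]+\mathbb{E}\!\left[\int_0^\sigma\frac{\tilde d(\mathbf{V}^{\mathbf{v}}_s)}{R^{\mathbf{v}}_s}f(\mathbf{V}^{\mathbf{v}}_s)\,ds\right].
\]
On the event $E=E(r,\alpha)$ of Lemma \ref{Lem:flowslab}, of probability $\geq 1-ce^{-1/cr^\eps}$, the flow remains in $B_0$, $\sigma$ has polynomial tails, $\int_0^\sigma ds/R_s=O(r^\alpha)$ with exponential decay, and the flow is $C^k$ in $w$ with norms $\leq K$. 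The contribution of $E^c$ to the expectations above is controlled using the mildness hypothesis $|\nabla^j f|=O(r^{-M})$ for $j\leq k+1$ against the super-exponential smallness of $\mathbb{P}(E^c)$.

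\textbf{Step 2: Differentiation in $w$.} Apply $\partial^k_w$ to both sides. Since $R^{\mathbf{v}}_s$ and $\sigma$ do not depend on $w$, the Fa\`a di Bruno expansion of $\partial^k_w\bigl[f(\mathbf{V}^{\mathbf{v}}_\sigma)\bigr]$ produces only $w$-derivatives of $f$ of order $\leq k$, evaluated at $\mathbf{V}^{\mathbf{v}}_\sigma$ (which lies on $\{R=r^\alpha\}$), multiplied by polynomials in the $w$-Jacobians of the flow up to order $k$, which are bounded by $K$. The interchange of $\partial^k_w$ with $\mathbb{E}$ is justified by dominated convergence, using the a priori bound $|\partial^{k+1}_w f|=O(r^{-M})$ as dominating function for the difference quotients together with integrability of the Bessel functionals. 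The potential integral term is treated identically, noting that the singular factor $1/R_s$ is independent of $w$.

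\textbf{Step 3: Recursion and iteration.} Set $u_j(\rho)=\sup_{w\in B'_0}|\partial^j_w f(\rho,w)|$ on a slightly shrunken box $B'_0\subset B_0$. Steps 1--2 yield
\[
u_k(r)\leq C_1\,u_k(r^\alpha)+C_2\sum_{j<k}u_j(r^\alpha)+C_3
\]
for $r$ small, where $C_3$ absorbs the integral term and the $E^c$ contribution. Proceed by induction on $k$: the base $k=0$ is Lemma \ref{Lem:cont}. Assuming $u_j(\rho)=O(\rho^{-\eps})$ for all $j<k$ and $\eps>0$, iterate the recursion $n$ times, going from scale $r$ to scale $r^{\alpha^n}$. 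Choosing $n\asymp\log\log(1/r)$ reaches a fixed scale where $f$ is smooth with bounded derivatives, and the accumulated prefactor $C_1^n=(\log(1/r))^{\log C_1/\log(1/\alpha)}$ is $O(r^{-\eps})$ for any prescribed $\eps>0$, closing the induction.

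\textbf{Main obstacle.} The delicate point is Step 2: justifying the differentiation under the expectation of the singular-potential integral and bookkeeping the exceptional event. The mildness hypothesis is formulated one order beyond the desired conclusion ($\partial^{k+1}_w f$ rather than $\partial^k_w f$) precisely so as to provide the dominating function needed to pass the $k$-th $w$-derivative inside $\mathbb{E}$; combined with the exponential smallness of $\mathbb{P}(E^c)$ absorbing polynomial blowup on the bad event, this gives a clean recursive estimate from which the log-type growth, and hence $O(r^{-\eps})$ bound, emerges by pure iteration.
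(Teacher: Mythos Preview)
Your overall strategy---use the flow regularity from Lemma \ref{Lem:flowslab} to propagate $w$-derivatives via a probabilistic representation, then bootstrap from scale $r$ to scale $r^\alpha$---is the right one and matches the paper's. However, there is a genuine gap in your Step 1/Step 2, stemming from the choice of the \emph{additive} Dynkin representation
\[
f(\mathbf v)=\mathbb E\bigl[f(\mathbf V^{\mathbf v}_\sigma)\bigr]+\mathbb E\!\int_0^\sigma\frac{\tilde d(\mathbf V^{\mathbf v}_s)}{R_s}f(\mathbf V^{\mathbf v}_s)\,ds
\]
rather than the \emph{multiplicative} Feynman--Kac representation the paper uses, namely
\[
f(\mathbf v)=\mathbb E\!\left[\exp\!\Bigl(\int_0^\sigma \frac{d(\mathbf V^{\mathbf v}_s)}{R_s}\,ds\Bigr)\,f(\mathbf V^{\mathbf v}_\sigma)\right].
\]
In the multiplicative form, $f$ appears only at the exit point, where $R_\sigma=r^\alpha$; the exponential prefactor is bounded on $E$ because $\int_0^\sigma ds/R_s$ is, and its $w$-derivatives are controlled by the bounded $C^k$ norms of the flow applied to the smooth coefficient $d$. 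In your additive form, by contrast, differentiating the potential integral in $w$ produces terms of the type $\int_0^\sigma R_s^{-1}\,(\partial_w^{j} f)(\mathbf V^{\mathbf v}_s)\,ds$ with $j\le k$. These evaluate $\partial_w^{j} f$ at \emph{all} radii $R_s\in(0,r^\alpha]$ visited by the path, and the Bessel process started at $r$ dips below $r$ (indeed below any $r^{1+\beta}$) with probability that is only polynomially small in $r$, not super-exponentially small. So on the good event $E$ of Lemma \ref{Lem:flowslab} there is no usable pathwise lower bound on $R_s$, and the best you have for $(\partial_w^{j} f)(\mathbf V^{\mathbf v}_s)$ is the a priori $O(R_s^{-M})$. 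The resulting integral $\int_0^\sigma R_s^{-1-M}\,ds$ is not controlled by $\int_0^\sigma R_s^{-1}\,ds$ and may not even have finite expectation for large $M$. Hence your claim in Step 3 that ``$C_3$ absorbs the integral term'' is not justified: the differentiated integral term feeds back $u_k(\rho)$ for $\rho$ arbitrarily small, and your recursion does not close.

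Once you switch to the multiplicative Feynman--Kac (so that $f$ is evaluated only at the exit radius $r^\alpha$), either your iterative scheme or the paper's one-shot argument works. The paper in fact avoids iteration entirely: it takes discrete $w$-differences of the multiplicative representation (so that dominated convergence is trivial), obtains directly $\partial_w^{k}f=O(r^{-\alpha M})$ plus negligible terms, and then simply chooses $\alpha\le\eps/M$ for the prescribed $\eps$. Your log--log iteration would also give the result, but is a detour; the essential fix is the representation, not the bookkeeping.
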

\begin{proof}
We have ${\mc L}={\mc L}_0+\frac dr$, $d$ smooth up to the boundary. Let
$$M^f_t=\exp\left(\int_0^t \frac{d({\bf V}^{\bf v}_t)}{R^r_s}ds\right)f({\bf V}^{\bf v}_t)$$
and $\sigma$ the first exit of $U_0$. Then (Feynman-Kac) $M^f_{\sigma\wedge .}$ is a local martingale. Moreover $\int_0^\sigma(R^r_s)^{-1}ds$ is bounded (consider e.g. the $Y_\sigma$ component, which is itself bounded). Then $M^f_{\sigma\wedge .}$ is a bounded martingale, which justifies the representation
\begin{equation}\label{eq:FK}
f({\bf v})=\E\left(\exp\left(\int_0^\tau d({\bf V}^{\bf v}_s)\frac{ds}{R^r_s}\right)f({\bf V}^{\bf v}_\tau)\right)
\end{equation}
for $\tau$ any stopping time anterior to $\sigma$. 

Let $\eta$ be a small parameter and consider the discrete derivative $(D_ig)({\bf v})=g({\bf v}+\eta e_i)-g({\bf v})$, where $e_i$ is the basis vector corresponding to the coordinate $w_i$. We have
$$D_{i_1}\dots D_{i_k}g=\eta^k\partial^k_{w_{i_1}\dots w_{i_k}}g+O(\eta^{k+1}\|g\|_{C^{k+1}})$$
We reason on an event $E$ as in Lemma \ref{Lem:flowslab}. On that event, $\int_0^{\tau_r^{r^\alpha}}ds/R^r_s$ is bounded; the flow and its derivatives in the $w$ directions up to order $k$ are bounded; and the derivatives of $d$ to any order are bounded. It follows that on $E$
$$ D_{i_1}\dots D_{i_k}\left(\exp\left(\int_0^{\tau_r^{r^\alpha}}d({\bf V}^{\bf v}_s)\frac{ds}{R^r_s}\right) f(V^{\bf v}_{\tau_r^{r^\alpha}})\right)=O(\eta^kr^{-\alpha M})$$
Then
$$\partial^k_{w_{i_1}\dots w_{i_k}}f({\bf v})=O(\eta^{-k}e^{-r^{-\eps}}+\eta r^{-M}+r^{-\alpha M})$$
By taking $\eta=r^M$ and $\alpha$ sufficiently small, the result follows.
\end{proof}

The following simple ODE lemma will allow us to translate regularity of $f$ in the $w$ directions into regularity in the $r$ direction. Recall that $\delta>2$. 

\begin{Lem}\label{Lem:paramODE}
Let $U$ be a neighborhood of ${\bf 0}$ in $\R^n$ and $\ell>0$; a point in $[0,\ell)\times U$ is denoted by $(r,s_1,\dots,s_n)$. Let $f:(0,\ell)\times U\rightarrow\R$ be bounded and smooth and $g:[0,\ell)\times U$ be such that $g,\partial_rg,\dots,\partial^k_rg$ are continuous in $(r,s_1,\dots)$ and smooth in $(s_1,\dots)$. Assume furthermore that
$$\frac 12\partial_{rr}f+\frac{\delta-1}{2r}\partial_rf=\frac gr$$
in $(0,\ell)\times U$. Then $\partial_r f,\dots,\partial^{k+1}_rf$ extend continuously to $[0,\ell)\times U$ and are smooth in $(s_1,\dots,s_n)$
\end{Lem}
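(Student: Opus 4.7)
The plan is to treat this as a one-parameter family of ODEs in $r$, with $s=(s_1,\dots,s_n)\in U$ as a smooth parameter, exploiting the fact that the operator $\frac12\partial_{rr}+\frac{\delta-1}{2r}\partial_r$ is in divergence form after multiplying by $r^{\delta-1}$. Indeed, the equation is equivalent to
$$\partial_r\bigl(r^{\delta-1}\partial_r f(r,s)\bigr)=2r^{\delta-2}g(r,s),$$
so integrating over $(0,r)$ yields $r^{\delta-1}\partial_r f(r,s)=2\int_0^r t^{\delta-2}g(t,s)\,dt+C(s)$ for some function $C$. The first thing to establish is that $C\equiv 0$: if $C(s_0)\neq 0$ at some $s_0$, then $\partial_r f\sim C(s_0)r^{1-\delta}$ near $r=0$, and a further integration would force $f$ to diverge like $r^{2-\delta}$ near $r=0$, which is incompatible with the assumed boundedness of $f$ since $\delta>2$.

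Once $C\equiv 0$, the change of variables $t=ru$ converts the manifestly singular representation $\partial_r f(r,s)=2r^{1-\delta}\int_0^r t^{\delta-2}g(t,s)\,dt$ into the regular one
$$\partial_r f(r,s)=2\int_0^1 u^{\delta-2}g(ru,s)\,du,$$
where $u^{\delta-2}$ is integrable on $[0,1]$ (because $\delta>2>1$) and the integrand is now continuous up to $r=0$. Continuity of $\partial_r f$ up to $\{r=0\}\times U$ follows at once from dominated convergence, and smoothness in the $s$ variables is obtained by differentiating under the integral, using that $g(ru,\cdot)$ is smooth in $s$.

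For higher $r$-derivatives, I differentiate the integral representation $m$ times in $r$, for $1\leq m\leq k$, to get
$$\partial_r^{m+1}f(r,s)=2\int_0^1 u^{\delta-2+m}\,(\partial_r^m g)(ru,s)\,du.$$
Since $\partial_r^m g$ is, by hypothesis, continuous on $[0,\ell)\times U$ and smooth in $s$, and since $u^{\delta-2+m}$ is still integrable on $[0,1]$ (in fact more so than for $m=0$), each $\partial_r^{m+1}f$ extends continuously to $[0,\ell)\times U$, and differentiation under the integral in the $s$-variables shows smoothness in $s$.

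The only real obstacle is the step $C\equiv 0$: everything after it is essentially an application of dominated convergence and differentiation under the integral. That step, in turn, rests on the combination of two hypotheses which are both critical: boundedness of $f$ and the inequality $\delta>2$ (which was arranged by choosing the root $\alpha_+$ of the indicial equation in the conjugation step). Without either, the representation via $u^{\delta-2}$ would pick up the singular mode $r^{2-\delta}\log r$ type terms and the conclusion would fail.
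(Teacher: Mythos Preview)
Your proof is correct and follows essentially the same route as the paper: rewrite the equation as $\partial_r(r^{\delta-1}\partial_r f)=2r^{\delta-2}g$, integrate, use boundedness of $f$ together with $\delta>2$ to kill the integration constant, change variables $t=ru$ to get $\partial_r f=2\int_0^1 u^{\delta-2}g(ru,s)\,du$, and then differentiate under the integral. Your write-up is in fact slightly more detailed than the paper's (you spell out the higher-derivative formula and the mechanism behind $C\equiv 0$), but the argument is the same.
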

\begin{proof}
We write 
$$r^{1-\delta}\partial_r\left(r^{\delta-1}\partial_rf\right)=2\frac{g}{r}$$
and thus
$$\partial_r f=r^{1-\delta}\left(h(s_1,\dots,s_n)+2\int_0^rg(t,s_1,\dots)t^{\delta-2}dt\right)$$
Since $f$ is bounded, $h(s_1,\dots,s_n)=0$. Then
$$\partial_rf=2\int_0^1g(ur,s_1,\dots,s_n)u^{\delta-2}du$$
from which the assertion follows by differentiating under the integral.
\end{proof}

Finally we obtain Lemma \ref{Lem:reg}, stated here for the operator ${\mc L}$ \eqref{eq:diffopL}, which is conjugate to ${\mc M}$ \eqref{eq:diffop0}.

\begin{Lem}
If ${\mc L}f=0$ in $U_0=(0,\ell)\times B_0$ and $f$ is mild and $f=O(r^{2-\delta+\eps})$ for some $\eps>0$, then $f$ and all its derivatives extend continuously to $[0,\ell/2]\times B_0/2$. 
\end{Lem}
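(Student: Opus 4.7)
Plan: The proof should combine the three preceding lemmas (\ref{Lem:flowslab}, \ref{Lem:cont}, \ref{Lem:gradgrowth}, \ref{Lem:paramODE}) in three successive stages that promote, in turn, (i) $f$, (ii) all $w$-derivatives of $f$, and (iii) all mixed derivatives of $f$ to continuous functions up to the boundary $\{r=0\}$.

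First, I would establish boundary continuity of $f$ by applying Lemma \ref{Lem:cont}. Writing $\mc L f=0$ as $\mc L_0 f = -\tilde d\, f/r$, the hypothesis $f=O(r^{2-\delta+\eps})$ gives $\mc L_0 f = O(r^{1-\delta+\eps})$; the decisive integrability estimate $\E\int_0^{\tau_r^{r'}}R_s^{-\eta}ds<\infty$ (valid for any $\eta<\delta$) underlying the proof of Lemma \ref{Lem:cont} still applies, so its Dynkin-formula argument delivers boundary continuity of $f$ in a relative neighborhood of $\mathbf 0$.

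Second, I would prove by induction on $|\alpha|$ that every partial derivative $\partial^\alpha_w f$ in the $w=(y,z_1,\dots,z_n)$ variables extends continuously to the boundary. Given mildness, Lemma \ref{Lem:gradgrowth} upgrades the a priori bound $\partial^\alpha_w f = O(r^{-M_\alpha})$ to $\partial^\alpha_w f=O(r^{-\eps})$ for every $\eps>0$. To promote this to boundary continuity, I would start from the Feynman--Kac identity \eqref{eq:FK} at the stopping time $\tau=\tau_r^{r^\beta}$ for some $\beta\in(0,1)$, and differentiate it $|\alpha|$ times in the $w$-components of the initial condition via discrete differences, exactly as in the proof of Lemma \ref{Lem:gradgrowth}. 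On the good event $E$ of Lemma \ref{Lem:flowslab}, the flow has bounded $C^{|\alpha|}$-norm in $w$, the exponential weight is bounded, and on the exit hypersurface $\{r=r^\beta\}$ (which is at positive distance from the singular boundary) both $f$ and its lower-order $w$-derivatives---continuous up to $\{r=0\}$ by the inductive hypothesis---are bounded continuous. The complement of $E$ contributes at most $O\!\bigl(r^{-M_\alpha}e^{-r^{-\eps}}\bigr)$, negligible as $r\to 0$. Letting $\mathbf v$ tend to a boundary point and invoking continuity of the flow in its initial condition (Lemma \ref{Lem:flowslab}(2)) together with the induction hypothesis on the exit hypersurface yields the claimed continuity of $\partial^\alpha_w f$.

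Third, I would rewrite the equation $\mc L f=0$ as
\[
\tfrac12\partial_{rr} f + \tfrac{\delta-1}{2r}\partial_r f \;=\; \frac{G(r,y,\mathbf z)}{r},\qquad G:=-\tfrac\tau 2\,\partial_y f - rb\,\partial_y f - r\sum_i c_i\,\partial_{z_i}f - \tilde d\,f.
\]
By the second stage, $G$ is continuous in $r$ up to the boundary and smooth in $w$, so Lemma \ref{Lem:paramODE} delivers continuous boundary extensions of $\partial_r f$ and $\partial_r^2 f$, each smooth in $w$. Iterating---differentiating the identity in $r$, invoking stage two again to control the $w$-derivatives appearing in the new right-hand side, and reapplying Lemma \ref{Lem:paramODE}---produces a continuous boundary extension of every mixed partial $\partial_r^j\partial^\alpha_w f$, completing the proof.

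The main technical obstacle is stage two: rigorously differentiating under the Feynman--Kac expectation in the presence of the singular exponential potential $\exp\!\int_0^\tau d\,ds/R_s$. The discrete-difference device on the good event $E$, matched with the mildness-driven bootstrap supplied by Lemma \ref{Lem:gradgrowth} to annihilate the complement's contribution, is the crucial maneuver; without the a priori mildness estimate, the complement of $E$ could not be neglected in the limit.
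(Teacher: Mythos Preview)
Your three-stage plan---continuity of $f$, then of all $w$-derivatives, then of mixed derivatives via Lemma \ref{Lem:paramODE}---matches the paper's architecture exactly, and stages (i) and (iii) are essentially the paper's argument.

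The gap is in stage (ii). When you differentiate the Feynman--Kac identity $|\alpha|$ times in $w$, the chain rule produces not only lower-order $w$-derivatives of $f$ at the exit hypersurface $\{R=r^{\beta}\}$, but also the top-order derivative $(\partial^{\alpha}_{w}f)(\mathbf V^{\mathbf v}_{\tau})$ itself (multiplied by the flow Jacobian). Your inductive hypothesis says nothing about this term; all you know is $\partial^{\alpha}_{w}f=O(r^{-\eps})$, and since the exit level $r^{\beta}\to 0$ as $\mathbf v$ approaches the boundary, this does not yield a continuous limit. The parenthetical ``which is at positive distance from the singular boundary'' is misleading: that distance collapses in the limit you are trying to take. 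So the induction does not close as written.

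The paper avoids this entirely by a commutator argument. Since $\partial_{w_i}r=0$ and the only singular coefficients in $\mathcal L$ are functions of $r$ alone, one computes
\[
\mathcal L(\partial^{k}_{w_{i_1}\cdots w_{i_k}}f)=[\mathcal L,\partial^{k}_{w_{i_1}\cdots w_{i_k}}]f
=(\text{order }\leq k\text{ in }\partial_w,\ \text{smooth coeffs})f
+r^{-1}(\text{order }\leq k-1,\ \text{smooth coeffs})f.
\]
From the $O(r^{-\eps})$ bound on all $w$-derivatives (Lemma \ref{Lem:gradgrowth}) this gives $\partial^{k}_{w}f=O(r^{-\eps})=O(r^{2-\delta+\eps'})$ and $\mathcal L_0(\partial^{k}_{w}f)=O(r^{-1-\eps})=O(r^{-2+\eps''})$, so Lemma \ref{Lem:cont} applies \emph{directly to $\partial^{k}_{w}f$} and delivers its boundary continuity---no induction on $|\alpha|$, no differentiation under the stochastic expectation. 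This is both simpler and sidesteps the very obstacle you flag in your final paragraph.
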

\begin{proof}
From Lemma \ref{Lem:cont}, $f$ extends continuously to the boundary. From the mildness condition, $\nabla^kf=O(r^{-M_k})$ for some $M_k$. Then from Lemma \ref{Lem:gradgrowth} it follows that the derivatives (of any order) of $f$ in the $w$ directions are $O(r^{-\eps})$ for $\eps$ arbitrarily small.

We have $\frac{\partial}{\partial w_i}r=0$; and thus
$${\mc L}(\partial^k_{w_{i_1}\dots w_{i_k}}f)=[{\mc L},\partial^k_{w_{i_1}\dots w_{i_k}}]f=\sum_{j_1,\dots,j_k}\alpha_{w_{j_1}\dots w_{j_k}}\partial^k_{w_{j_1}\dots w_{j_k}}f
+r^{-1}\sum_{j_1,\dots,j_{k-1}}\beta_{j_1\dots j_{k-1}}\partial^{k-1}_{w_{j_1}\dots w_{j_{k-1}}}f
$$
where the $\alpha$'s and $\beta$'s are smooth up to the boundary. Since $\partial^k_{w_{i_1}\dots w_{i_k}}f=O(r^{-\eps})$ and ${\mc L}\partial^k_{w_{i_1}\dots w_{i_k}}f=O(r^{-1-\eps})$, by another application
 of Lemma \ref{Lem:cont} we see that $\partial^k_{w_{i_1}\dots w_{i_k}}f$ extends continuously to the boundary.

We consider now ${\mc L}f=0$ as a parametric ODE:
$$\frac 12\partial_{rr}f(r,w_1,\dots)+\frac{\delta-1}{2r}\partial_rf(r,w_1,\dots)=\frac1 r\left(-df(r,\dots)+\sum_i\alpha_i\partial_{w_i}f(r,\dots)\right)$$
where $d$ and the $\alpha_i$'s are smooth up to the boundary. The RHS is continuous in $r$ and smooth in the $w_i$'s up to the boundary. By Lemma \ref{Lem:paramODE}, $\partial_rf$ and its derivatives in the $w$ directions are continuous up to the boundary. By repeated applications of Lemma \ref{Lem:paramODE}, the same holds for $\partial_r^kf$.

\end{proof}

\section{Fusion}

In this section we consider the framework for (pairs of commuting) Virasoro representations described in details e.g. in Section 4.3 of \cite{Dub_Virloc}. Specifically $(\Sigma,X,Y,\dots)$ is a bordered Teichm\"uller surface with two marked points $X,Y$ on the same boundary component; we are interested in the regime where $Y\rightarrow X^+$ (the boundary components are oriented, with the surface lying to their left-hand side). Additional points may be marked in the bulk or on the boundary (but not between $X,Y$). Then ${\mc T}_2$ denotes the Teichm\"uller space of such marked surfaces. We consider an extended Teichm\"uller space $\hat{\mc T}_2$ corresponding to surfaces with an additional marking consisting of $\tilde z$, $\tilde w$, formal local coordinates at $X,Y$. It can be realized as the projective limit of (finite-dimensional, smooth) Teichm\"uller surfaces. 

Similarly, we consider the Teichm\"uller space ${\mc T}_1$ of surfaces of type $(\Sigma,X,\dots)$ ($Y$ is omitted); and the space ${\mc T}_0$ of surfaces of type $(\Sigma,\dots)$ ($X,Y$ are omitted), so that we have natural projections ${\mc T}_2\rightarrow{\mc T}_1\rightarrow{\mc T}_0$. Likewise, $\hat{\mc T}_1$ is the extended Teichm\"uller space keeping track of a formal coordinate at $X$.

Over $\hat{\mc T}_2$, we consider the determinant bundle ${\mc L}$, whose sections can be identified with functional of a Riemannian metric satisfying a certain conformal anomaly formula. Then there are two commuting representations $(L^X_n)_{n\in\Z}$, $(L_n^Y)_{n\in\Z}$ of the Virasoro algebra (with central charge $c$) operating on smooth sections of ${\mc L}^{\otimes c}$. They correspond to infinitesimal deformations of the surface at $X,Y$ respectively. Over $\hat{\mc T}_1$, we have a single Virasoro representation corresponding to deformations at $X$.

We consider a section ${\mc Z}$ which (at least in some open set) is highest-weight (viz. depends on $\tilde z$ - resp. $\tilde w$ - as a $h_X$-form - resp. as a $h_Y$-form); and is a null vector for both representations, in the sense that
\begin{align*}
\Delta^X_{r,s}{\mc Z}&=0\\
\Delta^Y_{r',s'}{\mc Z}&=0
\end{align*}
where $\Delta^X_{r,s}$ (resp. $\Delta^Y_{r,s}$) is the image of the singular vector $\Delta_{r,s}\in{\mc U}(\Vir)$ in the representation by deformation at $X$ (resp. at $Y$). Let $z$ be a (genuine) local coordinate at $X$ and assume that for $Y$ close to $X$,
$${\mc Z}(\Sigma,X,Y,z,z-z(Y),\dots)=(z(Y)-z(X))^{h-h_X-h_Y}\left({\mc Z}_0(\Sigma,X,z,\dots)+o(1)\right)$$
with ${\mc Z}_0$ finite and non-vanishing. Then ${\mc Z}_0$ (defined on the Teichm\"uller space $\hat{\mc T}_1$ of surfaces of type $(\Sigma,X,\tilde z,\dots)$) is itself highest-weight and one expects that it satisfies a null vector equation. We will show that this is indeed the case if $(r',s')=(2,1)$ (or dually $(1,2)$). The problem is essentially to go from a pair of commuting representations on sections on $\hat{\mc T}_2$ to a single representation on sections on $\hat{\mc T}_1$.

There are two main steps in the proof to the main result, Theorem \ref{Thm:fus}. First we will justify the existence of an asymptotic expansion
\begin{equation}\label{eq:asympexp}
{\mc Z}(\Sigma,X,Y,z,z-z(Y),\dots)=(z(Y)-z(X))^{h-h_X-h_Y}\left(\sum_{k=0}^n{\mc Z}_k(\Sigma,X,z,\dots)(z(Y)-z(X))^k+O((z(Y)-z(X))^{n+1})\right)
\end{equation}
for arbitrarily large $n$. This is an analytic argument based on the condition $\Delta^Y_{2,1}{\mc Z}=0$ and Lemma \ref{Lem:mainreg}. Then we will deduce from the null vector equations relations between the terms ${\mc Z}_0,\dots,{\mc Z}_k,\dots$ of the asymptotic equation and will show they imply a null vector equation for the leading term ${\mc Z}_0$; that is the algebraic step, based on Lemma \ref{Lem:algfus}.

At this stage the main difficulty is to show that we are indeed in the situations abstracted out in Lemmas \ref{Lem:algfus} and \ref{Lem:mainreg} respectively; this is the role of Lemmas \ref{Lem:loccomp} to \ref{Lem:defW}. In order to do this, we need to write both null vector equations in the same set of coordinates. The basic issue here is that while so far we have been using formal local coordinates (with possibly zero as radius of convergence), we will now need, as an intermediate step, to consider families of (genuine) local coordinates defined on a neighborhood containing both $X$ and $Y$. 

We may assume that the additional marking includes spectator points $Z,Z'$ on the same boundary component as $X$ and $Y$ (in cyclic order $Z',Z,X,Y$). We can choose a meromorphic 1-form $\sigma=\sigma(\Sigma,Z')$ with a pole at $Z'$, real and positive along the boundary near $X$, and depending smoothly on the conformal class of $\Sigma$ (e.g. by taking derivatives of the Green kernel). %
This gives a function $z_\Sigma=\int_Z^.\sigma$ (integrating along the boundary component) defined near $X$, analytic in the position and smooth in $(\Sigma,Z,Z',\dots)$.

Let $N\in\N$ be a large integer. If $\tilde z$ is an $N$-jet of local coordinate at $X$, there is a unique polynomial $P=P(\tilde z,z_\Sigma)$ of degree $N$ s.t. $\tilde z$ is the $N$-jet of $P(z_\Sigma-z_\Sigma(X))$. The mapping 
$$f:(\Sigma,X,Y,\tilde z,\dots)\longmapsto P(z_\Sigma(Y)-z_\Sigma(X))$$
defines a smooth function on Teichm\"uller space of surfaces with a marked $N$-jet, and thus lifts to a smooth function on the extended Teichm\"uller space $\hat{\mc T}_2$. We will also consider 
$$g:(\Sigma,X,Y,\tilde z,\tilde w)\longmapsto \frac{dP(z_\Sigma-z_\Sigma(X))}{dw}(Y),$$
which also depends on a 1-jet of local coordinate at $Y$. Remark that changing $z_\Sigma$ to another function $z'_\Sigma$ changes $f$ to $f'$ with $f-f'=O(|z(Y)-z(X)|^{N+1})$.

\begin{Lem}\label{Lem:loccomp}
We have
\begin{align*}
(\ell_n^Xf)(\Sigma,X,Y,\tilde z,\dots)&=-f^{n+1}+o(|z(Y)-z(X)|^{N+n+1})\\
(\ell_n^Xg)(\Sigma,X,Y,\tilde z,\tilde w,\dots)&=(n+1)f^ng+o(|z(Y)-z(X)|^{N+n})
\end{align*}
for $n\in\Z$.
\end{Lem}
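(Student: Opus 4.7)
The plan is to identify $\ell_n^X$ explicitly as an infinitesimal deformation of the formal coordinate at $X$, propagate this through the polynomial definitions of $f$ and $g$, and compare to the stated right-hand sides via an elementary truncation estimate.

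Following the conventions of \cite{Dub_Virloc}, for $n\geq -1$ the generator $\ell_n^X$ corresponds (up to the sign convention) to the one-parameter family of changes of formal coordinate at $X$ given by $\tilde z\mapsto \tilde z-\epsilon\tilde z^{n+1}$, with the complex structure and the remaining markings (including $Y,\tilde w,z_\Sigma$) held fixed. Writing the $N$-jet of $\tilde z$ as a polynomial $P(u)=c_1u+\dots+c_Nu^N$ in $u=z_\Sigma-z_\Sigma(X)$, this deformation induces $P\mapsto P_\epsilon=P-\epsilon[P^{n+1}]_{\leq N}$ at first order in $\epsilon$, where $[\,\cdot\,]_{\leq N}$ denotes truncation of a polynomial in $u$ to degree at most $N$ (forced by the fact that $P_\epsilon$ must itself be of degree $\leq N$).

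Differentiating $f_\epsilon=P_\epsilon(z(Y)-z(X))$ at $\epsilon=0$ then yields $\ell_n^Xf=-[P^{n+1}]_{\leq N}(z(Y)-z(X))$. Since $[P^{n+1}]_{\leq N}$ and $P^{n+1}$ agree on all coefficients of $u^k$ for $k\leq N$, their difference evaluated at $u=z(Y)-z(X)$ is controlled by a constant times $|z(Y)-z(X)|^{N+1}$; this gives $\ell_n^Xf=-f^{n+1}+O(|z(Y)-z(X)|^{N+1})$, which is consistent with the ambiguity of $f$ noted immediately before the lemma. Since the jet order can be increased arbitrarily, this implies the stated $o(|z(Y)-z(X)|^{N+n+1})$ estimate. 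The identical strategy applies to $g=P'(z(Y)-z(X))\cdot\frac{dz_\Sigma}{dw}(Y)$: the identity $[Q]_{\leq N}'(u)=[Q']_{\leq N-1}(u)$ combined with the chain rule $(P^{n+1})'=(n+1)P^nP'$ identifies the leading term of $\ell_n^Xg$ as $(n+1)f^ng$, with an error controlled by $|z(Y)-z(X)|^N$.

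For $n=-1$, the generator $\ell_{-1}^X$ acts by translating the marked point $X$ along the boundary, and the claim reduces to a Taylor expansion of $P(z_\Sigma(Y)-z_\Sigma(X))$ in $z_\Sigma(X)$. For $n\leq -2$ the coordinate deformation $\tilde z^{n+1}$ has a pole at $0$ and the Virasoro action involves a genuine deformation of the complex structure, implemented by cutting out a small disk around $X$ and regluing; the relevant variation of the auxiliary coordinate $z_\Sigma$ is computed via a residue at $X$, and combining this with the jet-level computation above (applied to the deformed $z_\Sigma$) yields the same polynomial identity modulo errors of the required order. The main technical content is purely algebraic, namely the polynomial identity relating $[P^{n+1}]_{\leq N}$ and $P^{n+1}$ at $u=z(Y)-z(X)$; the principal conceptual obstacle is the identification of the abstract Virasoro action on $\hat{\mathcal T}_2$ with the concrete polynomial deformation at the level of $N$-jets, which is most delicate precisely for the generators with $n\leq -2$.
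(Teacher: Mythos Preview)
Your treatment for $n\geq 1$ is correct as far as it goes, but it is not the paper's argument, and your handling of $n\leq -2$ is a genuine gap rather than a proof.

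The paper does not split into cases. Its key point is that, because $z_\Sigma$ is a \emph{genuine} analytic coordinate near $X$ (not merely a formal jet), one can take $z_0=P(\tilde z,z_\Sigma)$ to be an actual local coordinate and implement $\ell_n^X$ for \emph{every} $n\in\Z$ via a one-parameter family $(\Sigma_t,X_t,Y,z_t)$ with $z_t=z-tz^{n+1}+o(t)$ holding in a semi-annulus around $X$ that \emph{contains} $Y$ (possible since $Y$ is close to $X$). Then $\frac{d}{dt}\big|_{t=0}z_t(Y)=-z(Y)^{n+1}$ directly, and one only has to compare $z_t(Y)$ with $f(\Sigma_t,\dots)$ using that their $N$-jets agree at $X_t$. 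This gives the formula uniformly in $n$, with no separate analysis of the complex-structure deformation for $n\leq -2$: that deformation is already encoded in the family $(\Sigma_t)$, and the computation of $z_t(Y)$ is unaffected by it since $Y$ sits in the semi-annulus where the transition formula is explicit.

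By contrast, your argument for $n\leq -2$ (``the relevant variation of the auxiliary coordinate $z_\Sigma$ is computed via a residue at $X$, and combining this with the jet-level computation above \dots yields the same polynomial identity'') is not carried out; you neither specify the variation of $z_\Sigma$ nor show why the resulting error is of the claimed order. Since the applications in the paper (Lemma~\ref{Lem:defX}, the action of $\hat\Delta_{r,s}\in\mathcal U(\Vir^-)$) require precisely the generators with $n\leq -1$, this is the case that matters most. Also, your sentence ``since the jet order can be increased arbitrarily'' is misplaced: $N$ is fixed in the statement, and $f$ itself changes if you change $N$; the error term is what it is for the given $N$.

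In short: the idea you are missing is to work with a genuine local coordinate in a semi-annulus containing $Y$, which collapses all cases $n\in\Z$ into a single one-line derivative computation.
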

\begin{proof}
By construction of the $\ell_n$'s \cite{Dub_Virloc}, there is a one-parameter family of surfaces $(\Sigma_t,X_t,Y,z_t)$ identified outside of a neighborhood of $X$ s.t. $z_t$ is a local coordinate at $X_t$, $z_t=z-tz^{n+1}+o(t)$ in a semi-annulus centered at $X$, and $(\ell_n f)(\Sigma,\dots)={\frac{d}{dt}}_{|t=0}f(\Sigma_t,\dots)$. We can choose this semi-annulus to contain $Y$ and $z_0=P(\tilde z,z_\Sigma)$ (for $Y$ close enough to $X$ given $\tilde z$). Then
$${\frac{d}{dt}}_{|t=0}(z_t(Y)-z_t(X_t))=-f^{n+1}$$
Moreover $(\Sigma_t,X_t,\tilde z)$ is identified with $(\Sigma,X,\tilde z)$ near $X$ and consequently the $N$-jets of $z_t$ and $P(\tilde z,z_{\Sigma_t})$ agree at $X$, so that $f(\Sigma_t)=z_t(Y_t)+O(z_t(Y_t)^{N+1})$ (where the $O$ is differentiable in $t$). This gives the expression for $\ell_n^Xf$. The argument for $\ell_n^Xg$ is similar.
\end{proof}

Given a smooth section ${\mc Z}$ on $\hat{\mc T}_2$ with an expansion \eqref{eq:asympexp}, we can consider the ``descendants" $({\mc Z}_k)_{k\geq 0}$ as smooth sections on $\hat{\mc T}_1$. We now want to express the action of $L_m^X$ on ${\mc Z}$ in terms of the action of $L_m$ on the descendants.

\begin{Lem}\label{Lem:defX}
Assume that ${\mc Z}$ is $h_Y$-highest-weight at $Y$ and has an asymptotic expansion of type:
$${\mc Z}(\Sigma,X,Y,z,z-z(Y),\dots)=\eps^{\alpha}\left(\sum_{k=0}^n{\mc Z}_k(\Sigma,X,z,\dots)\eps^k+\eps^{n+1}{\mc Z}_n^r(\Sigma,X,Y,z,\dots)\right)
$$
with ${\mc Z}_n^r$ smooth for each $n$ and $\eps=z(Y)-z(X)$. Then $L_m^X{\mc Z}$ has a similar expansion
$$(L^X_m{\mc Z})(\Sigma,X,Y,z,z-z(Y),\dots)=\eps^{\alpha}\left(\sum_{k=m}^n(L_m^X{\mc Z})_k(\Sigma,X,z,\dots)\eps^k+\eps^{n+1}(L_m^X{\mc Z})_n^r(\Sigma,X,Y,z,\dots)\right)
$$
and
$$(L_m^X{\mc Z})_k=L_m{\mc Z}_k-(\alpha+k-m){\mc Z}_{k-m}$$
\end{Lem}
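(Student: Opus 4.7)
The plan is to apply $L_m^X$ term-by-term to the truncated asymptotic expansion of ${\mc Z}$ and match coefficients of $\eps^{\alpha+k}$. The central tool is Lemma \ref{Lem:loccomp}, which quantifies the action of $\ell_m^X$ on the smooth functions $f$ (approximating $\eps$) and $g$ (the $1$-jet at $Y$ attached to $\tilde w = z - z(Y)$); the $\hat{\mc T}_1$-data ${\mc Z}_k$ is acted on by the ordinary Virasoro generator $L_m$ because, as a pullback of a section on $\hat{\mc T}_1$ along the forgetful map $\hat{\mc T}_2 \to \hat{\mc T}_1$, it is insensitive to the $Y$-direction.

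Concretely, I would first choose the truncation degree $N$ in the construction of $P$ (hence of $f, g$) much larger than $n + |m|$, so that $f = \eps + O(\eps^{N+1})$ and $g = 1 + O(\eps)$. One re-expresses the given expansion in the form
$$
{\mc Z} = f^\alpha \sum_{k=0}^n {\mc Z}_k f^k + f^{\alpha+n+1} R,
$$
with $R$ smooth on $\hat{\mc T}_2$; this differs from the given expansion only by contributions absorbed into the remainder, since $f^j - \eps^j = O(\eps^{j+N})$. Now $f$ is a genuine smooth function so the Leibniz rule applies pointwise. From $\ell_m^X f = -f^{m+1} + o(f^{N+m+1})$ one obtains $\ell_m^X(f^{\alpha+k}) = -(\alpha+k) f^{\alpha+k+m} + o(f^{N+\alpha+k+m})$, while by construction $L_m^X$ acts on ${\mc Z}_k$ (viewed as a section on $\hat{\mc T}_2$ through pullback) as $L_m {\mc Z}_k$. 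The $1$-jet factor encoded by $g$ transforms via $\ell_m^X g = (m+1) f^m g + o(\cdots)$, which, since $g = 1 + O(\eps)$ and ${\mc Z}$ is $h_Y$-highest-weight at $Y$ (so its dependence on $\tilde w$ is rigidly fixed), combines with the corresponding $h_Y$-scaling to produce contributions only at orders beyond $\eps^{\alpha+n}$, thanks to the choice of $N$.

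Gathering contributions to the coefficient of $\eps^{\alpha+k}$: the ``Virasoro-on-${\mc Z}_k$'' piece gives $L_m {\mc Z}_k$, while the scalar piece coming from applying $\ell_m^X$ to $f^{\alpha+k-m}$ inside the $(k-m)$-th summand gives $-(\alpha+k-m) {\mc Z}_{k-m}$ (with the convention ${\mc Z}_j = 0$ for $j<0$). Adding them yields $(L_m^X {\mc Z})_k = L_m {\mc Z}_k - (\alpha+k-m){\mc Z}_{k-m}$, as claimed. Smoothness of the remainder $(L_m^X {\mc Z})_n^r$ follows because $L_m^X$ is a first-order differential operator with smooth coefficients when $Y \neq X$: applied to $f^{\alpha+n+1} R$ it yields a smooth section times $f^{\alpha+n+1-m}$, and starting from an expansion of order $n + |m|$ if necessary and then re-truncating places the tail in the required asymptotic class.

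The main obstacle is bookkeeping: one must verify that the $o(\cdots)$ errors in Lemma \ref{Lem:loccomp}, the discrepancy between $\eps$ and $f$, and the subleading corrections from $g$ (together with the $h_Y$-weight at $Y$) do not pollute the coefficient of $\eps^{\alpha+k}$ for $k \leq n$. This is resolved by taking $N$ large relative to $n$ and $|m|$, and by exploiting that, under the $h_Y$-highest-weight condition, the $\tilde w$-dependence of ${\mc Z}$ is entirely determined, so no hidden corrections from higher jets at $Y$ can enter. Once this is in place, the formula reduces to a transparent Leibniz-rule computation on the reformulated expansion.
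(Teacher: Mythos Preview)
Your overall strategy matches the paper's: rewrite the expansion in terms of the smooth function $f$ (so that Leibniz applies honestly), use Lemma~\ref{Lem:loccomp} for $\ell_m^X f$ and $\ell_m^X g$, and read off the coefficient of $f^{\alpha+k}$. The paper does exactly this, writing
\[
{\mc Z}(\Sigma,X,Y,z,\tilde w,\dots)=f^\alpha g^{-h_Y}\left(\sum_{k=0}^n{\mc Z}_k f^k+f^{n+1}{\mc Z}_n^r\right)
\]
and then applying $L_m^X$ term by term.

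However, there is a genuine gap in your treatment of the $g$ factor. You claim that the contribution from $\ell_m^X g=(m+1)f^m g+o(\cdots)$ ``produces contributions only at orders beyond $\eps^{\alpha+n}$, thanks to the choice of $N$.'' This is false: the shift in degree coming from $\ell_m^X g$ is $m$, not $N$. Concretely, differentiating $g^{-h_Y}$ gives
\[
\ell_m^X(g^{-h_Y})=-h_Y g^{-h_Y-1}\ell_m^X g=-h_Y(m+1)f^m g^{-h_Y}+o(\cdots),
\]
which, multiplied by $f^{\alpha+k}{\mc Z}_k$, contributes $-h_Y(m+1){\mc Z}_k$ at order $f^{\alpha+k+m}$. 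After reindexing this is a $-h_Y(m+1){\mc Z}_{k-m}$ contribution at \emph{every} level $k\geq m$, not just in the remainder. Taking $N$ large does nothing to suppress it. This is precisely the computation carried out in the paper's proof, which yields (after minor cleanup) the coefficient $L_m{\mc Z}_k-\bigl(\alpha+k-m+h_Y(m+1)\bigr){\mc Z}_{k-m}$; the displayed formula in the lemma statement appears to drop the $h_Y(m+1)$ term, but the paper's proof keeps it, and it is exactly what is needed to match the $\hat L_m$ action used in Lemma~\ref{Lem:algfus} (compare $\hat L_n(vt^{\alpha+k})=(L_nv)t^{\alpha+k}-(\alpha+k+(n+1)h)vt^{\alpha+k+n}$) and hence for the application in Theorem~\ref{Thm:fus}.

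So: keep your setup, but do not discard the $g^{-h_Y}$ factor. Carry it through the Leibniz computation as the paper does, and you will recover the extra $-h_Y(m+1){\mc Z}_{k-m}$ term.
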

\begin{proof}
We may write
$${\mc Z}(\Sigma,X,Y,z,\tilde w,\dots)=f^\alpha g^{-h_Y}\left(\sum_{k=0}^n{\mc Z}_k(\Sigma,X,z,\dots)f^k+f^{n+1}{\mc Z}_n^r(\Sigma,X,Y,z,\dots)\right)
$$
and apply Lemma \ref{Lem:loccomp} for $N$ large enough to get
$$(L_m^X{\mc Z})(\Sigma,X,Y,z,\tilde w,\dots)=f^\alpha g^{-h_Y}\left(\sum_{k=0}^n(f^k(L_m{\mc Z}_k-f^{m}((k+m+\alpha)+h_Y(m+1)){\mc Z}_k)+O(f^{N+m})\right)
$$
which gives the formula for $L_m^X{\mc Z}_k$.
\end{proof}

We turn to the deformation at $Y$. This is complicated by the fact that the local coordinate is centered at $X$; but we need only evaluate $L_{-1}^Y$ and $L_{-2}^Y$.

\begin{Lem}\label{Lem:defY}
Assume that ${\mc Z}$ is $h_X$-highest-weight at $X$ and has an asymptotic expansion as in Lemma \ref{Lem:defX}. 
Then $L_m^Y{\mc Z}$ has a similar expansion
$$(L^Y_m{\mc Z})(\Sigma,X,Y,z,z-z(Y),\dots)=\eps^{\alpha}\left(\sum_{k=m}^n(L_m^X{\mc Z})_k(\Sigma,X,z,\dots)\eps^k+\eps^{n+1}(L_m^X{\mc Z})_n^r(\Sigma,X,Y,z,\dots)\right)
$$
 for $m=-1,-2$ and
\begin{align*}
(L_{-1}^Y{\mc Z})_k&=(\alpha+k){\mc Z}_k\\
(L_{-2}^Y{\mc Z})_k&=\ell_{-1}{\mc Z}_{k+1}-(\alpha+k+2-h_X){\mc Z}_{k+2}+\sum_{m\geq 0}\ell_{-2-m}{\mc Z}_{k-m}
\end{align*}
\end{Lem}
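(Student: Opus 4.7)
My plan is to mirror the proof of Lemma \ref{Lem:defX}, replacing the application of Lemma \ref{Lem:loccomp} with its counterpart for deformations at $Y$. The central new feature is the asymmetry between the position of the deformation (at $Y$) and of the formal coordinate (at $X$), which forces $L_{-2}^Y$ to mix an $\eps$-derivative with Virasoro operators acting at $X$ and to pick up a conformal anomaly proportional to $h_X$.

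First I would establish the analog of Lemma \ref{Lem:loccomp} for $L_{-1}^Y$ and $L_{-2}^Y$, computing how they act on the coordinate functions $f$ and $g$. The one-parameter family implementing $L_{-1}^Y$ is a pure translation of $Y$ and therefore matches $\partial_\eps$ to leading order, modulo errors of order $|\eps|^{N}$ which are absorbed into the remainder term. For $L_{-2}^Y$, the generating vector field vanishes to second order at $Y$ in the formal coordinate $\tilde w$; transporting it to the natural coordinate $z_\Sigma - z_\Sigma(X)$ centered at $X$ produces a pole at $\eps=0$, a Taylor series of deformations acting at $X$ (of all orders in $\eps$), and a rescaling of the coordinate at $X$. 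Combining these contributions --- the last of which couples to $h_X$ through the highest-weight condition there --- yields
\[
L_{-2}^Y \;\equiv\; -\eps^{-1}\partial_\eps \;+\; \eps^{-1}\ell^X_{-1} \;+\; \frac{h_X}{\eps^2} \;+\; \sum_{k\ge 0}\eps^k\,\ell^X_{-2-k},
\]
valid modulo an error $o(|\eps|^{N-2})$ that can be absorbed into the remainder by choosing $N$ large enough. This is the geometric incarnation of the formal operator $\tilde\ell_{-2}$ from Section 2 (second approach).

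Next I would plug the expansion of ${\mc Z}$ into these local representations. For $L_{-1}^Y$ this is a one-line computation: $\partial_\eps(\eps^\alpha\sum_k {\mc Z}_k\eps^k) = \eps^{\alpha-1}\sum_k(\alpha+k){\mc Z}_k\eps^k$, from which the first claimed formula is read off. For $L_{-2}^Y$ the four summands in the local representation contribute separately: the first and third combine into $-(\alpha+k+2-h_X){\mc Z}_{k+2}$, the second yields $\ell_{-1}{\mc Z}_{k+1}$, and the Taylor tail produces $\sum_{m\ge 0}\ell_{-2-m}{\mc Z}_{k-m}$. Summing gives the second claimed formula, and the remainder analysis proceeds exactly as in Lemma \ref{Lem:defX}.

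The main obstacle is the derivation of the local form of $L_{-2}^Y$: one must carefully follow a second-order deformation at $Y$ through the change of local coordinate to $X$, simultaneously accounting for (i) the pole at $\eps=0$ coming from the singular part of the transported vector field, (ii) the smooth tail which acts on the $N$-jet at $X$ and produces the $\ell^X_{-2-k}$ series, and (iii) the leading rescaling of the coordinate at $X$ whose contribution couples to the highest weight $h_X$. Once this local form is in place, the remaining computation is pure bookkeeping, and the output formulas feed directly into Lemma \ref{Lem:algfus} for the fusion step.
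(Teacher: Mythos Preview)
Your approach is essentially the same as the paper's, with two minor differences worth noting.

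First, where you speak of ``transporting the vector field to the natural coordinate centered at $X$'', the paper implements this concretely via a \v Cech/Kodaira--Spencer cocycle argument: the deformation $-(z-z(Y))^{-1}\partial_z$ on a small semi-annulus around $Y$ is replaced by an equivalent cocycle supported on a semi-annulus $A_X$ disconnecting $X$ and on a larger semi-annulus $A$ enclosing both $X$ and $Y$. Expanding the vector field near $X$ in $A_X$ gives the $-\eps^{-1}\ell_{-1}^X-\eps^{-2}\ell_0^X$ piece (the $\ell_0^X$ part is what produces your $h_X/\eps^2$ on a highest-weight vector, together with part of the $-(\alpha+k+2)$ shift), while expanding in $A$ gives the tail $\sum_{m\ge 0}\eps^m\ell_{-2-m}$; the key observation there is that $\eps=z(Y)-z(X)$ is constant under the $A$-deformation since both $X$ and $Y$ lie inside. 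This is precisely the geometric justification of the formal identity $\tilde\ell_{-2}$ from Section~2 you are invoking, so your outline is on target --- just be aware that the clean way to split ``pole, rescaling, tail'' is via this cocycle decomposition rather than a single coordinate change.

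Second, your outline does not mention the central charge. The computation you sketch gives the formula for $\ell_{-2}^Y$ acting on the trivialization $f$ (the $c=0$ case). The paper then passes to general $c$ by expanding the anomaly term $S(\Sigma,Y,z-z(Y),\dots)$ in powers of $\eps$ and matching it against the anomaly contributions in $L_{-2-m}$ at $X$; this is what allows the final formula to be read with $L$'s rather than $\ell$'s when applied in Lemma~\ref{Lem:algfus}. It is a short bookkeeping step, but you should include it.
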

\begin{proof}
For $L_{-1}^Y$, we have the representation
$$L_{-1}^Y{\mc Z}(\Sigma,X,Y,z,z-z(Y),\dots)={\frac{d}{dt}}_{|t=0}{\mc Z}(\Sigma,X,Y_t,z,z-z(Y_t),\dots)$$
where $Y_t$ is the point s.t. $z(Y_t)=z(Y)+t$, from which we read off immediately
$$(L_{-1}^Y{\mc Z})_k=(\alpha+k){\mc Z}_k$$
To write the action of $L_{-2}^Y$ on the ${\mc Z}_k$'s, it is rather convenient to go back to its construction by Kodaira-Spencer deformation. The operator $\ell_{-2}^Y$ (evaluated at the local coordinate $z-z(Y)$) corresponds to the deformation given by the vector field $-(z-z(Y))^{-1}\partial_z$ on a small semi-annulus $A_Y$ around $Y$ (not containing $X$). In order to express it in terms of the $(\ell_n^X)$, defined relatively to a deformation around $X$, we can consider an equivalent \v Cech cocycle. Let $A_X$ be a semi-annulus around $X$ (disconnecting $X$ from $Y$ and the other markings) and $A$ a semi-annulus around $X,Y$ (disconnecting $X,Y$ from the other markings); we choose these three semi-annuli to be disjoint. 

The deformation given by $\ell_{-2}^Y$ is thus equivalent to sum of the deformation given by $(z-z(Y))^{-1}\partial_z$ in $A_X$ and the one given $(z-z(Y))^{-1}\partial_z$ in $A$. For the first one, we expand near $X$ to get
$$\frac{1}{z-z(Y)}\partial_z=\left(\frac{1}{z(X)-z(Y)}-\frac{z-z(X)}{(z(X)-z(Y))^2}+O((z-z(X))^2)\right)\partial_z$$
viz. the deformation is given by
$$-\eps^{-1}\ell_{-1}^X-\eps^{-2}\ell_0^X$$
modulo a term vanishing on highest-weight vectors (at $X$), and we have already analyzed the action of $\ell_{-1}^X$ on terms of the expansion. For the second deformation (in $A$, where $z-z(X)$ is large compared to $z(X)-z(Y)$), we expand
$$\frac{1}{z-z(Y)}\partial_z=\left(\frac{1}{z-z(X)}+\frac{\eps}{(z-z(X))^2}+\cdots+\frac{\eps^n}{(z-z(X))^{n+1}}+O(\eps^{n+1})\right)\partial_z
$$
Since $X,Y$ are inside $A$, $z(X)-z(Y)$ is constant under this second deformation. We thus obtain the expression:
$$(\ell_{-2}^Yf)_k=\ell_{-1}f_{k+1}-(\alpha+k+2-h_X)f_{k+2}+\sum_{m\geq 0}\ell_{-2-m}f_{k-m}$$
if $f$ is $h_X$-highest-weight at $X$ with an expansion 
$$f(\Sigma,X,Y,z-z(X),z-z(Y),\dots)=\eps^{\alpha}\sum_{k\geq 0}\eps^k f_k(\Sigma,X,z-z(X),\dots)$$
This gives the case $c=0$. In the general case, we write
$$L_{-2}^Y(fs_\zeta^c)=\left(\ell_{-2}^Yf+\frac c{12}S(\Sigma,Y,z-z(Y),\dots)f\right)s_\zeta^c$$
(where $s_\zeta$ denotes the reference section of the determinant bundle, see \cite{Dub_Virloc}), and expand
$$S(\Sigma,Y,z-z(Y),\dots)=\sum_{k\geq 0}\frac{\eps^k}{k!}(\ell_{-1}^X)^kS(\Sigma,X,z-z(X),\dots)$$
Comparing with the expression (for $m\geq 0$)
$$L_{-2-m}(fs_\zeta^c)=\left(\ell_{-2-m}f+\frac{c}{12}\frac{(\ell_{-1})^mS}{m!}f\right)s_\zeta^c$$
concludes.
\end{proof}

We will also need the (comparatively trivial) case where the deformation occurs at a boundary point $W$ distinct from $X,Y$; this will be needed in order to iterate the fusion process.

\begin{Lem}\label{Lem:defW}
Assume that ${\mc Z}$ is $h_X$- (resp. $h_Y$-) highest-weight at $X$ (resp. $Y$) and has an asymptotic expansion as in Lemma \ref{Lem:defX}. 
Then $L_m^W{\mc Z}$ has a similar expansion
$$(L^W_m{\mc Z})(\Sigma,X,Y,z,z-z(Y),\dots)=\eps^{\alpha}\left(\sum_{k=m}^n(L_m^W{\mc Z})_k(\Sigma,X,z,\dots)\eps^k+\eps^{n+1}(L_m^W{\mc Z})_n^r(\Sigma,X,Y,z,\dots)\right)
$$
and ${\mc Z}_0$ is $(h_X+h_Y+\alpha)$-highest-weight with 
$$(L_m^W{\mc Z})_0=L_m^W{\mc Z}_0$$
for $m\in\Z$.
\end{Lem}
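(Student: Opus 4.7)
The proof runs parallel to that of Lemma~\ref{Lem:defY} but is considerably lighter, because $W$ is separated from both $X$ and $Y$. The key observation is that, since $W\neq X,Y$, the Kodaira--Spencer representative of the deformation $L_m^W$ can be chosen with vector field supported in a semi-annulus $A_W$ around $W$ disjoint from a fixed semi-disc around $X$ containing $Y$ (for $Y$ close enough to $X$). Under the resulting one-parameter family $(\Sigma_t,X,Y,\tilde z,\tilde w,\dots)$, the surfaces are identified outside $A_W$; in particular $X$, $Y$, their formal coordinates $\tilde z,\tilde w$, and any genuine local coordinate $z$ at $X$ defined on the neighborhood containing $Y$ are all invariant. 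Consequently the expansion variable $\eps=z(Y)-z(X)$ is inert under the deformation, and so are the auxiliary quantities $f,g$ of Lemma~\ref{Lem:loccomp} (for a compatible choice of $Z,Z'$).

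First, I would establish the expansion for $L_m^W{\mc Z}$ by differentiating the assumed expansion of ${\mc Z}$ at $t=0$ term by term:
$$L_m^W{\mc Z}=\eps^\alpha\left(\sum_{k=0}^n(L_m^W{\mc Z}_k)\eps^k+\eps^{n+1}(L_m^W{\mc Z}_n^r)\right),$$
where on the right $L_m^W$ denotes the corresponding deformation acting on sections on $\hat{\mc T}_1$. This latter action is well-defined because the same \v Cech cocycle descends from $\hat{\mc T}_2$ to $\hat{\mc T}_1$ upon forgetting $Y,\tilde w$: the support of the cocycle does not see these data, and the central-extension piece, encoded by the Schwarzian of a local coordinate at $W$, is intrinsic to the $W$-jet. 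Smoothness of the remainder $(L_m^W{\mc Z})_n^r$ follows from smoothness of the flow in $t$ together with the smoothness of ${\mc Z}_n^r$.

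For the highest-weight statement on ${\mc Z}_0$, I would apply Lemma~\ref{Lem:defX} (and in particular its proof) while carefully tracking the $g^{-h_Y}$ prefactor. For $m>0$, the hypothesis $L_m^X{\mc Z}=0$ forces the coefficient of $\eps^\alpha$ to vanish, giving $L_m{\mc Z}_0=0$. For $m=0$, the identities $\ell_0^X f=-f$ and $\ell_0^X g=g$ (Lemma~\ref{Lem:loccomp}) give $\ell_0^X(f^\alpha g^{-h_Y})=-(\alpha+h_Y)f^\alpha g^{-h_Y}$, so the relation $L_0^X{\mc Z}=h_X{\mc Z}$ combined with the leading term of the expansion yields $L_0{\mc Z}_0=(h_X+h_Y+\alpha){\mc Z}_0$, which is the desired weight.

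The main (and essentially only) obstacle is the technical bookkeeping needed to confirm that the central-extension contribution in $L_m^W$ transfers consistently between the two Teichm\"uller spaces. This reduces to the observation that the Schwarzian at $W$ depends only on the germ of local coordinate at $W$, a datum preserved by the projection $\hat{\mc T}_2\to\hat{\mc T}_1$; once this is noted, the proof amounts to differentiating under the expansion and invoking the previous lemmas.
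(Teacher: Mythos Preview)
Your argument is essentially correct but takes a genuinely different route from the paper's. The paper does not use the geometric observation that the deformation cocycle at $W$ can be taken disjoint from a neighborhood of $X,Y$; instead it works in the explicit trivialization built from the $\Sigma$-dependent reference coordinate $z_\Sigma$ (introduced before Lemma~\ref{Lem:loccomp}). In those coordinates $x=z_\Sigma(X)$, $y=z_\Sigma(Y)$ \emph{do} move under $L_m^W$, and the paper writes $L_m^W$ as a first-order operator with drift terms $g(x,w)\partial_x+g(y,w)\partial_y$ and weight terms $h_Xk(x,w)+h_Yk(y,w)$. Expanding in $y-x$, the leading coefficient picks up a $(h_X+h_Y+\alpha)k(x,w)$ contribution, the $\alpha$ arising from $\partial_x g=k$ (i.e.\ from the variation of $\eps=y-x$). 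This single computation simultaneously yields $(L_m^W{\mc Z})_0=L_m^W{\mc Z}_0$ and identifies the weight of ${\mc Z}_0$ at $X$ as $h_X+h_Y+\alpha$.

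Your approach avoids this computation by choosing a $\Sigma$-independent local chart near $X$, so that $\eps$ is genuinely constant along the $W$-deformation and term-by-term differentiation is immediate; you then recover the weight separately via Lemma~\ref{Lem:defX} and the $g^{-h_Y}$ bookkeeping. This is cleaner conceptually, and the central-extension point you flag is indeed harmless for the reason you give. The trade-off is that you must be careful that the expansion coefficients ${\mc Z}_k$ you differentiate are the \emph{same} as those defined via the paper's $z_\Sigma$-based coordinates (they agree modulo higher order in $\eps$, which suffices for the leading term); the paper's coordinate computation sidesteps this consistency check at the cost of a slightly longer calculation.
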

\begin{proof}
The goal here is to relate the action of $L_m^W$ on sections on $\hat{\mc T}_2$ and $\hat{\mc T}_1$ (the formal coordinate is now at $W$). We use the same choice of coordinates as before Lemma \ref{Lem:loccomp} and write
$$f(x,y,w_1,\dots)={\mc Z}(\Sigma,X,Y,\dots,z,z-z(Y),\dots)/s_\zeta^c$$
In such a trivialization, by construction 
\begin{align*}
(L_m^W{\mc Z})(\Sigma,X,Y,W,\dots)/s&=\sum_i g_i(w_1,\dots)\partial_{w_i}f+g(x,w_1,\dots)\partial_xf+g(y,w_1,\dots)\partial_yf\\
&+\left(cU(w_1,\dots)+h_Xk(x,w_1,\dots)+h_Yk(y,w_1,\dots)\right)f
\end{align*}
where $w_1,\dots$ are coordinates on the extended Teichm\"uller space of surfaces of type $(\Sigma,W,\dots)$ (including a formal local coordinate at $W$ and all the markings except $X,Y$), and the $g,k,g_i$'s are smooth coefficients. 

More explicitly, if $(\Sigma_t,W_t,\dots)_{t\geq 0}$ is a one-parameter family of surfaces representing the deformation $\ell_m^W$ (all identified together away from $W$) and $z_t=z_{\Sigma_t}$ is the corresponding local coordinate near $X$, we have 
$$k(x,w_1,\dots)={\frac{d}{dt}}_{|t=0}\frac{dz_t}{dz}(X)$$
Besides, by our choice of coordinates $y_t-x_t=\int_X^Ydz_t$, from which we get $\partial_xg(x,w_1,\dots)=k(x,w_1,\dots)$.

Then we write the asymptotic expansion 
$$f(x,y,w_1,\dots)=(y-x)^\alpha \left(\sum_{k=0}^Nf_k(x,w_1,\dots)(y-x)^k+o((y-x)^{\alpha+N})\right)$$ and it follows that
\begin{align*}
(L_m^W{\mc Z})(\Sigma,X,Y,W,\dots)/s&=(y-x)^{\alpha}\sum_{k=0}^Nu_k(x,w_1,\dots)+o((y-x)^{\alpha+N})
\end{align*}
with
\begin{align*}
u_0(x,w_1,\dots)&=\sum_i g_i(w_1,\dots)\partial_{w_i}u_0+g(x,w_1,\dots)\partial_xu_0\\
&+\left(cU(w_1,\dots)+(h_X+h_Y+\alpha)k(x,w_1,\dots)\right)u_0
\end{align*}
and the following terms $u_1,\dots$ also involve derivatives of $k$ w.r.t. $x$. This justifies the expansion for $L_m^W{\mc Z}$ and the expression
$$(L_m^W{\mc Z})_0=L_m^W{\mc Z}_0$$
\end{proof}

We can now phrase our main result, obtained by combining the algebraic elimination argument of Lemma \ref{Lem:algfus} with the regularity estimates of Lemma \ref{Lem:mainreg}. Lemmas \ref{Lem:defX} and \ref{Lem:defY} ensure that we are in the algebraic situation abstracted out in Lemma \ref{Lem:algfus}.

\begin{Thm}\label{Thm:fus} Let ${\mc Z}$ be a (local) smooth section of ${\mc L}^{\otimes c}$ over the extended Teichm\"uller space $\hat{\mc T}_2$ of marked surfaces of type $(\Sigma,X,Y,\tilde z,\tilde w,\dots)$. Assume that ${\mc Z}$ is $h_{r,s}$- (resp. $h_{2,1}$-) highest-weight w.r.t. $\tilde z$ (resp. $\tilde w$), satisfies the null vector equations 
\begin{align*}
\Delta^X_{r,s}{\mc Z}&=0\\
\Delta^Y_{2,1}{\mc Z}&=0
\end{align*}
and that for $\eps>0$ small enough
$${\mc Z}(\Sigma,X,Y,z,z-z(Y),\dots)=O((z(Y)-z(X))^{h_{r+1,s}-h_{r,s}-h_{2,1}-\eps})$$
as $Y\rightarrow X^+$. Assume furthermore that $\tau\notin\Q$ and $r\tau-s>0$. Then
$${\mc Z}_0(\Sigma,X,\tilde z,\dots)=\lim_{Y\rightarrow X^+}(z(Y)-z(X))^{-h_{r+1,s}+h_{r,s}+h_{2,1}}{\mc Z}(\Sigma,X,Y,z,z-z(Y),\dots)$$
is well-defined (locally in $\hat{\mc T}_1$), $h_{r+1,s}$-highest-weight, smooth, and satisfies
$$\Delta_{r+1,s}{\mc Z}_0=0$$
\end{Thm}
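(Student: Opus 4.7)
The plan is to combine the analytic result of Lemma \ref{Lem:mainreg} with the algebraic elimination procedure of Lemma \ref{Lem:algfus}. The connecting tissue is supplied by Lemmas \ref{Lem:loccomp}--\ref{Lem:defW}, which express both deformations in a common set of (genuine) local coordinates near the singular locus $\Delta = \{X = Y\}$.

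First I would work in a trivialization. Using a choice of boundary coordinate $z_\Sigma$ as described before Lemma \ref{Lem:loccomp}, pick coordinates on $\hat{\mc T}_2$ given by $(x,y,\mathbf{w}) = (z_\Sigma(X), z_\Sigma(Y), \mathbf{w})$ where $\mathbf{w}$ parametrizes the remaining Teichm\"uller data (including jets at $X$, $Y$ and the spectator markings). In these coordinates the null vector equation $\Delta^Y_{2,1}\mc{Z} = (L_{-1}^Y)^2\mc{Z} - \tau L_{-2}^Y\mc{Z} = 0$ becomes a second order PDE of the type \eqref{eq:diffop0}: the leading singular part is $\frac 12\partial_{y}^2 + \frac{\tau}{2(y-x)}\partial_y - \frac{\tau h_{r,s}}{2(y-x)^2}$, whose indicial equation $\nu(\nu-1)+\tau\nu - \tau h_{r,s} = 0$ has roots $\alpha_\pm = h_{r\pm 1,s} - h_{r,s} - h_{2,1}$, as computed in the proof of Lemma \ref{Lem:algfus}. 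The assumption $r\tau - s > 0$ gives $\alpha_+ > \alpha_-$ and also $\alpha_+ - \alpha_- = r\tau - s \notin \Z$ (since $\tau \notin \Q$), placing us in the non-resonating regime. The hypoellipticity hypothesis on $\frac 12 \partial_x^2 + Z$ required by Lemma \ref{Lem:mainreg} is the one verified in \cite{Dub_Virloc} for the level-2 null vector written in a trivialization, possibly after the lifting of Section \ref{ssec:lifting} to increase the dimension. Given these ingredients, Lemma \ref{Lem:mainreg} applies and yields $\mc{Z} = (z(Y)-z(X))^{\alpha_+} g_+$ with $g_+$ smooth up to $\Delta$. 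Taylor-expanding $g_+$ in $\varepsilon = z(Y) - z(X)$ produces an asymptotic expansion of the form \eqref{eq:asympexp} to arbitrary order $n$, with smooth coefficients $\mc{Z}_k$.

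Next I would translate the two null vector equations into statements about the sequence of descendants $(\mc{Z}_k)_{k\geq 0}$. Lemma \ref{Lem:defX} gives the action of $L_m^X$ on the expansion in terms of $L_m$ acting on each $\mc{Z}_k$ plus a shift, exactly matching the formulas for the operators $\hat L_n$ acting on $W \bigotimes V_{\alpha,h}$ in Section 3.2. Lemma \ref{Lem:defY} (for $m = -1, -2$) produces the analogous formulas for $\tilde L_{-1}, \tilde L_{-2}$. Consequently, evaluating $\Delta^X_{r,s}\mc{Z} = 0$ and $\Delta^Y_{2,1}\mc{Z} = 0$ coefficient by coefficient in $\varepsilon$ yields precisely the system of relations on the $(\mc{Z}_k)$ treated in the proof of Lemma \ref{Lem:algfus}, with the Virasoro module $W$ realized as germs of smooth sections on $\hat{\mc T}_1$ modulo the identification provided by $L_m = L_m^X$. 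Lemma \ref{Lem:algfus}, applied with $\alpha = \alpha_+$ (taking the $+$ sign in the fusion channel), then gives directly that $\mc{Z}_0$ is a highest-weight vector of weight $h_{r+1,s}$ and satisfies $\Delta_{r+1,s}\mc{Z}_0 = 0$.

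It remains to verify that $\mc{Z}_0$, defined as a leading coefficient in a trivialization, is in fact a smooth section of the appropriate line bundle on $\hat{\mc T}_1$, and transforms correctly under deformations at spectator markings $W$. The first point follows from the smoothness of $g_+$ up to $\Delta$ and the naturality of the coordinates built from $z_\Sigma$. The second is exactly the content of Lemma \ref{Lem:defW}, which asserts that $(L_m^W \mc{Z})_0 = L_m^W \mc{Z}_0$, so that the descendant is covariant under deformations away from $X, Y$. Combining these gives the full statement of the theorem. The delicate step of the entire argument is the verification of the Frobenius expansion \eqref{eq:asympexp}; the algebraic manipulation is then essentially formal. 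In particular, the hypothesis $r\tau - s > 0$ is used precisely to ensure that we fall in the $\alpha_+$ branch where Lemma \ref{Lem:mainreg} applies without requiring a two-sided Frobenius decomposition.
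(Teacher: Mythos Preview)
Your proposal is correct and follows essentially the same route as the paper's own proof: trivialize, recognize $\Delta^Y_{2,1}{\mc Z}=0$ as an operator of type \eqref{eq:diffop0} with the H\"ormander condition checked in \cite{Dub_Virloc}, invoke Lemma \ref{Lem:mainreg} to obtain the Frobenius expansion \eqref{eq:asympexp}, then use Lemmas \ref{Lem:defX}--\ref{Lem:defY} to feed the descendants into Lemma \ref{Lem:algfus}. Two minor remarks: the singular first-order term in the paper's PDE for $f$ sits on $\partial_x$ rather than $\partial_y$ (coming from $L_{-2}^Y$ acting via $\frac{1}{y-x}\partial_x$), though this does not affect the indicial roots; and your invocation of Lemma \ref{Lem:defW} is not strictly needed for the theorem as stated---the paper uses it only later, when iterating fusion in Section 6.
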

In the case $r\tau-s<0$, the statement holds by replacing $h_{r+1,s}$ with $h_{r-1,s}$ and $\Delta_{r+1,s}$ with $\Delta_{r-1,s}$. The condition $r\tau-s>0$ is used only to establish the existence and smoothness of ${\mc Z}_0$, and we expect it can be dispensed with (i.e., as $Y\rightarrow X$, ${\mc Z}$ is the superposition of two Frobenius series, and the leading term of both series satisfies a null-vector equation).
\begin{proof}
We begin by writing the condition $\Delta_{2,1}^Y{\mc Z}=0$ in coordinates and check it is of the type considered in Lemma \ref{Lem:mainreg}. 

As explained before Lemma \ref{Lem:loccomp}, we choose a 1-form regular and non-vanishing near $X$ depending smoothly on $(\Sigma,Z,Z',\dots$). We take $(w_1,\dots,w_n)$ smooth local coordinates on the Teichm\"uller space of surfaces of type $(\Sigma,Z,Z',\dots)$ and extend them to a system of local coordinates of type $(x,y,w_1,\dots,w_n)$ on the space of surfaces of type $(\Sigma,X,Y,Z,Z',\dots)$ by setting $z(.)=\int_Z^.\sigma$, $x=z(X)$, $y=z(Y)$. Note that this also gives reference local coordinates $z-z(X)$, $z-z(Y)$ at $X,Y$ respectively.

We then trivialize
$${\mc Z}(\Sigma,X,Y,\tilde z,\tilde w,\dots)=f(x,y,w_1,\dots,w_n)s_\zeta^c(\Sigma)\left(\frac{d\tilde z}{dz}(X)\right)^{h_{r,s}}\left(\frac{d\tilde w}{dz}(Y)\right)^{h_{2,1}}$$
where $s_\zeta^c$ is the reference section of ${\mc L}^{\otimes c}$. The condition $\Delta_{2,1}{\mc Z}=0$ translates into a hypoelliptic PDE for $f$ (see \cite{Dub_Virloc}, Section 5.4).

We have that 
$$L_{-1}^Y{\mc Z}={\frac{d}{dt}}_{|t=0}{\mc Z}(\Sigma,X,Y+t,\tilde z,\tilde w-t,\dots)=\left(\frac{d\tilde w}{dz}(Y)\right)^{-1}\left(\partial_yf+h_{2,1}f\partial_z\log\frac{d\tilde w}{dz}(Y)\right)s_\zeta^c(\Sigma)\left(\frac{d\tilde z}{dz}(X)\right)^{h_{r,s}}\left(\frac{d\tilde w}{dz}(Y)\right)^{h_{2,1}}
$$
and similarly for $(L_{-1}^Y)^2{\mc Z}$, so that we simply have
$$((L_{-1}^Y)^2{\mc Z})(\Sigma,X,Y,z-z(X),z,\dots)=(\partial_{yy}f) s_\zeta^c(\Sigma)$$
For $L_{-2}^Y$, by reasoning as in Lemma \ref{Lem:defY}, we find
$$(L_{-2}^Y{\mc Z})(\Sigma,X,Y,z-z(X),z,\dots)=\left(\left(\frac{1}{y-x}\partial_x+\frac{h_X}{(x-y)^2}+(reg)\right) f \right)s_\zeta^c(\Sigma)$$
where $(reg)$ is a degree 1 differential operator with smooth coefficients (including at $x=y$).

It follows that $f$ solves a second-order PDE of type:
$$(\partial_{yy} +\frac{\tau}{x-y}\partial_x-\frac{\tau h_{r,s}}{(y-x)^2}+(reg))f=0$$
which satisfies the H\"ormander bracket condition \eqref{eq:Hormcond}.

Remark that $h_{r+1,s}-h_{r-1,s}=r\tau-s$ is positive by assumption. By Lemma \ref{Lem:mainreg}, we conclude that 
$$f(x,y,w_1,\dots,w_n)=(y-x)^{\alpha_+}f_+(x,y,w_1,\dots,w_n)$$
where $\alpha_+=h_{r+1,s}-h_{r,s}-h_{2,1}$, where $f_+$ extends smoothly to the singular surface $x=y$. This justifies the asymptotic expansion \eqref{eq:asympexp}.

Next we translate the null vector equations in terms of the formal series
$$w=t^\alpha\sum_{k\geq 0}t^k{\mc Z}_k\in W\otimes V_{\alpha,h}$$
where $W$ denotes smooth sections on $\hat{\mc T}_1$.
 
{}From Lemma \ref{Lem:defX} we obtain $\hat\Delta_{r,s}w=0$; and from Lemma \ref{Lem:defY} we get $\tilde\Delta_{2,1}w=0$, with notation as in Lemma \ref{Lem:algfus}. Then we apply the said Lemma \ref{Lem:algfus} to conclude. 
  
\end{proof}

\section{Schramm's formula for multiple $\SLE$s}

In \cite{Sch_percform}, Schramm gives a closed form expression for the probability that the trace of a chordal $\SLE$ passes to the left of a marked point in the bulk. 

Here we consider $n$ commuting $\SLE$s (multiple $\SLE$s, $n$-leg $\SLE$s, or $n-\SLE$s) connecting the marked boundary points $X$ and $Y$ in a simply-connected domain $D$. When $\kappa\leq 4$, the $n$ traces are simple and mutually avoiding except at the endpoints (\cite{Dub_Comm}). They divide the domain $D$ into $n+1$ random sectors. A natural extension of the question addressed in \cite{Sch_percform} is to evaluate the probability that a marked bulk point $Z$ ends up in one these sectors (``watermelon probabilities").

Before turning to that problem, and to provide additional motivation to fusion, we discuss relevant situations in discrete models converging to multiple SLEs.

\subsection{Discrete models}\label{ssec:discrete}

\paragraph{Loop-Erased Random Walks and Uniform Spanning Trees.}
The easiest example is that of Loop-Erased Random Walks (LERWs), which are distributed as branches of the Uniform Spanning Tree (UST) by Wilson's algorithm \cite{Wi}. Consider a bounded, simply-connected domain $D$ with two marked boundary points $X,Y$ and a grid approximation $\Gamma_\delta\subset\delta\Z^2$ of $D$, and consider the Uniform Spanning Tree on this domain with wired boundary conditions. Condition on a branch from a point within $o(1)$ of $Y$ to join the boundary root within $o(1)$ of $X$. A celebrated result of Lawler-Schramm-Werner \cite{LSW_LERW} states that the distribution of such a branch converges, as $\delta\searrow 0$, to chordal $\SLE_2$ in $(D,X,Y)$. 

This can be modified \cite{Fomin,Dub_Comm,Dub_Euler,KozL} in order to accommodate multiple $\SLE$s in the following natural way. Consider $n$ points within $o(1)$ of $Y$ and the branches connecting them to the boundary root. Condition on these branches to exit the domain within $o(1)$ of $X$ and to be disjoint except within $o(1)$ of $X$. The resulting system of branches converges in distribution to a multiple $\SLE_2$. See Figure \ref{Fig:LERWmult}.
\begin{figure}
\begin{center}\includegraphics[scale=.8]{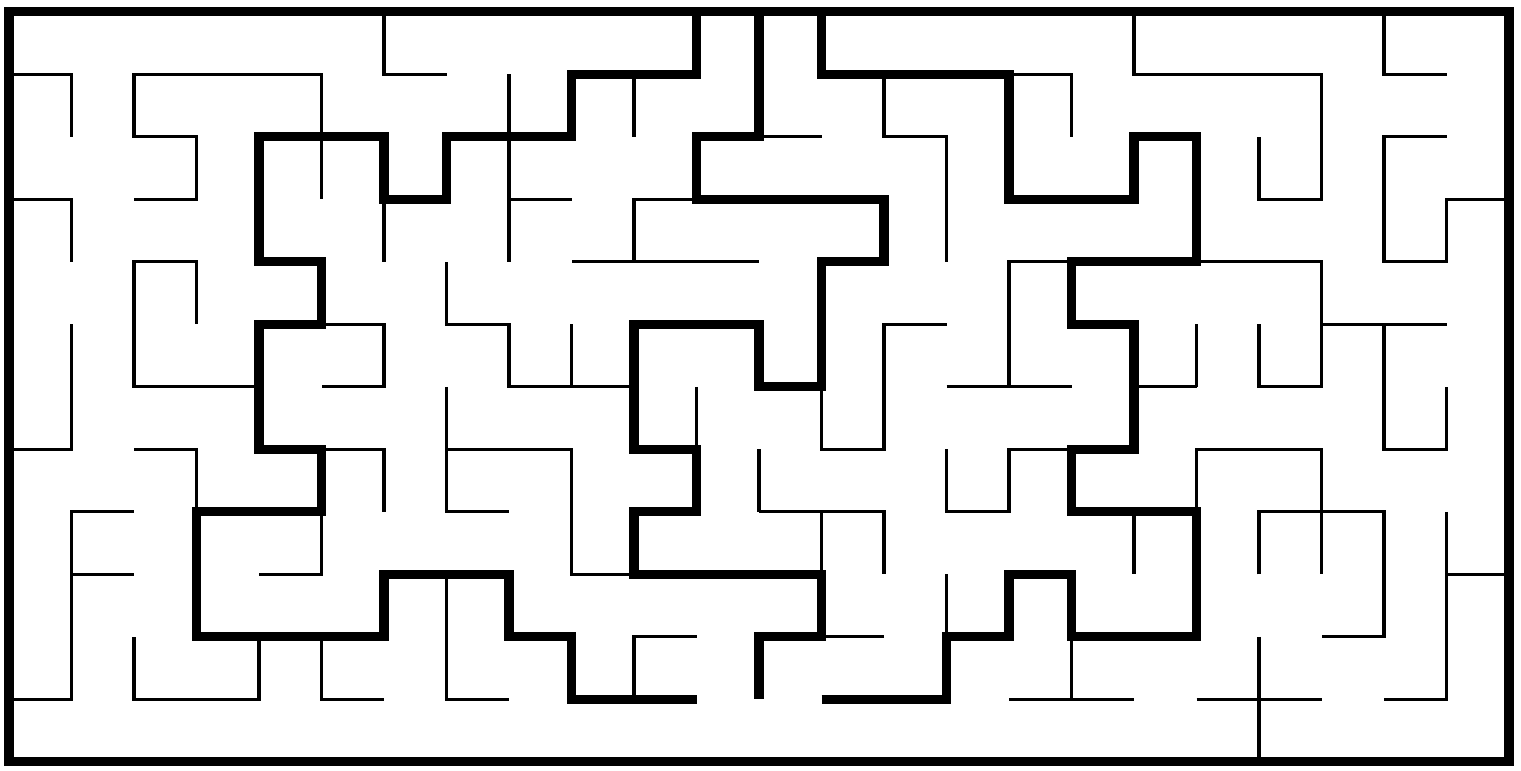}\end{center}
\caption{Uniform Spanning Tree with wired boundary conditions and three branches (bold) started near the bottom middle and conditioned to connect to the boundary near the top middle.}\label{Fig:LERWmult}
\end{figure}

Correspondingly, if $Z$ is a point in the bulk (the interior) of $D$, the probability that it has $k$ branches on one side and $n-k$ on the other side converges to $f_k(\theta)$, where $\theta$ is s.t. $(\H,0,\infty,e^{i\theta})$ is conformally equivalent to $(D,X,Y,Z)$. 

In percolation, or more generally FK percolation (or random-cluster model, see e.g. \cite{Grim_FK}), one can consider the following situation. In the grid approximation (with small mesh $\delta$) of a simply-connected domain $D$ with marked boundary points $X_1,\dots,X_n,Y_n,\dots,Y_1$ (listed in cyclic order), consider an FK configuration with alternating (at each marked boundary point) boundary conditions wired/free; this creates $n$ interfaces. Condition on the interface starting at $X_i$ to end at $Y_i$, $i=1,\dots,n$ and take the $X_i$'s (resp. $Y_i$'s) microscopically close to a point $X$ (resp. $Y$). Conjecturally, the system of interfaces converges to a multiple $\SLE$, and as before one can study e.g. their position relative to a marked bulk point $Z$.

In order to generate all possible null vectors (viz. with $r,s>1$), one needs to fuse commuting $\SLE$s with dual values of $\kappa$ (viz. in $\{\kappa,16/\kappa\}$) (it was observed in \cite{Dub_Comm} that commuting $\SLE$s have either the same or dual values of $\kappa$). Again one can describe a simple example in terms of UST and LERW.

Let $D$ be a simply-connected domain with two marked boundary points $X,Y$, and $\Gamma_\delta$ a subgraph of $\delta\Z^2$ approximating $D$. Consider $T$ a UST (sampled uniformly at random from spanning trees on $\Gamma_\delta$) with wired boundary conditions on the boundary arc $(XY)$ and free boundary conditions on the boundary arc $(YX)$. The dual tree $T^*$ is a UST on the dual graph, with boundary conditions swapped. In this context, Lawler, Schramm and Werner \cite{LSW_LERW} showed that the Peano exploration path (traced between $T$ and $T^*$) converges to chordal $\SLE_8$. One can modify this situation by taking a point $Y'_\delta$ on the dual graph within $O(\delta)$ of $Y$ and condition the corresponding branch of $T^*$ to reach the boundary within $O(\delta)$ of $X$. If $\gamma'$ denotes this branch and $\gamma$ is the Peano path on the side of that branch containing the wired boundary arc $(XY)$, the pair $(\gamma,\gamma')$ converges in law (in the small mesh limit) to a system of commuting $\SLE$s. See Figure \ref{Fig:UST22}.
\begin{figure}
\begin{center}\includegraphics[scale=.8]{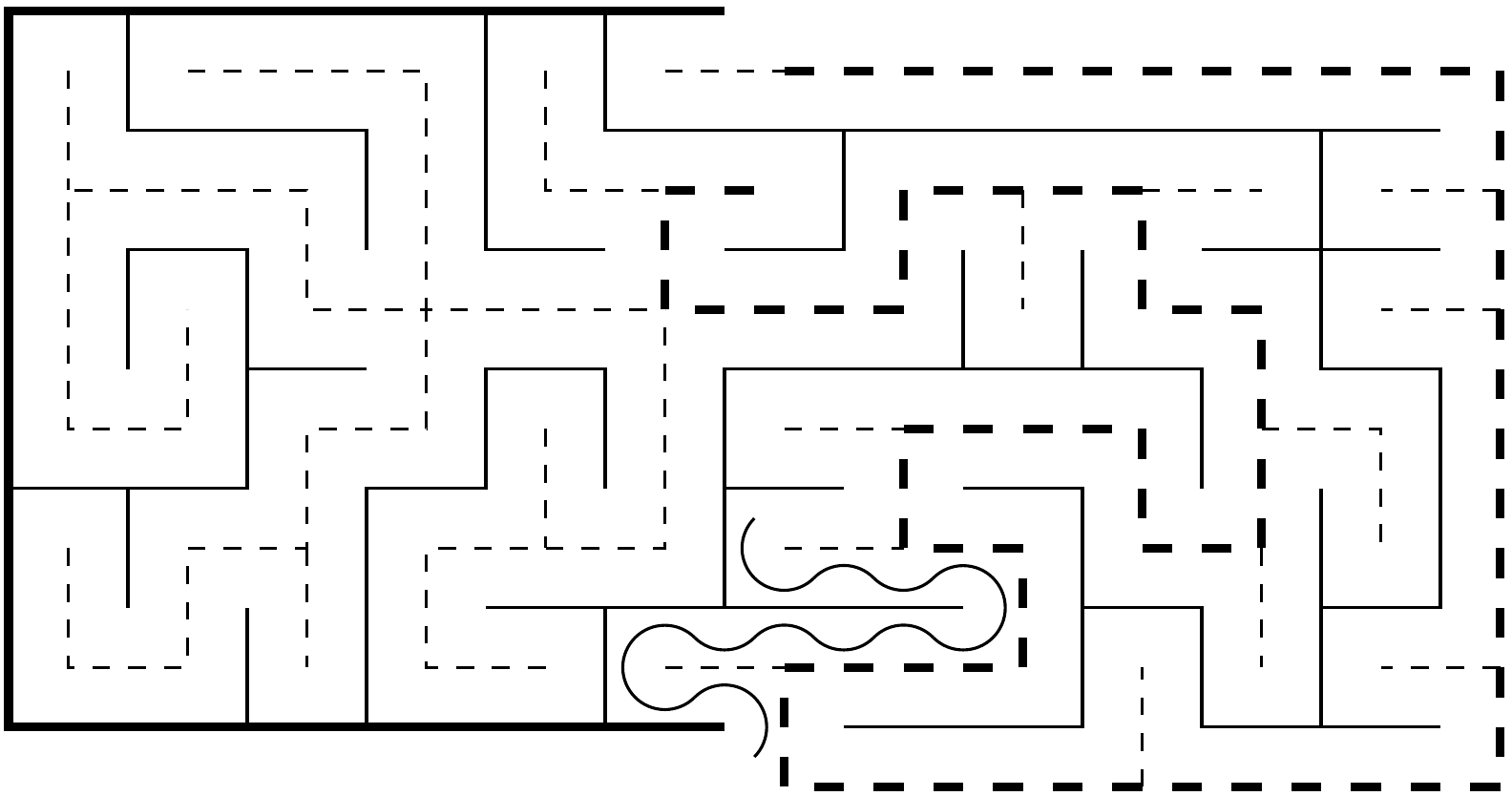}\end{center}
\caption{Uniform Spanning Tree (solid) with wired boundary condition on the left half and free on the right half. Dual tree (dashed). Dual branch started near top and condition to exit near bottom (bold dashed). Initial slit of the Peano exploration path (curved). 
}\label{Fig:UST22}
\end{figure}
A simple observable in this situation is the probability that a spectator point in the bulk lies on one side of the branch $\gamma'$. In the regular situation (when $X'_\delta$ is at macroscopic distance of $X,Y$), this is a ``martingale observable" w.r.t. an $\SLE_8$-type curve started at $X$ and an $\SLE_2$-type curve started at $X'$ (see e.g. the discussion in Section 2.5 of \cite{Dub_Comm}), leading to two second-order BPZ equations. We can then take the limit $X'\rightarrow X$ to obtain an observable with one fewer variable satisfying a higher-order BPZ equation.

\paragraph{Ising model.}
Let us briefly discuss the case of the Ising model, which is instructive and rather well understood.

In order to consider coupled primal and dual interfaces, a natural set-up is that of the Edwards-Sokal coupling of the Ising model with the random cluster model (see e.g. \cite{Grim_FK}). Fix an underlying planar graph $\Gamma=(V,E)$ and assign a positive weight $w(e)$ to each edge $e$. An admissible configuration is a pair $(\Gamma',\sigma)$ where $\Gamma'$ is a subgraph of $\Gamma$ and $\sigma\in\{\pm 1\}^V$ is a spin ``field" which is constant on connected components of $\Gamma'$. The weight of a configuration is $\prod_e w(e)$. Then $\sigma$ (resp. $\Gamma'$) is a sample of an Ising model (resp. random cluster model with $q=2$).

From the work of Smirnov \cite{Smi_ICM,CheSmi_univ}, we know that in the small mesh limit and for appropriate weighted graphs and boundary conditions, the boundaries of Fortuin-Kasteleyn (FK) clusters (connected component of $\Gamma'$) converge to $\SLE_{16/3}$-type curves and the spin interfaces (running between vertices with spin $+1$ on one side and $-1$ on the other) converge to $\SLE_3$-type curves.%

For instance consider the situation depicted in Figure \ref{Fig:ES2}.
\begin{figure}
\begin{center}\includegraphics[angle=90,scale=.4]{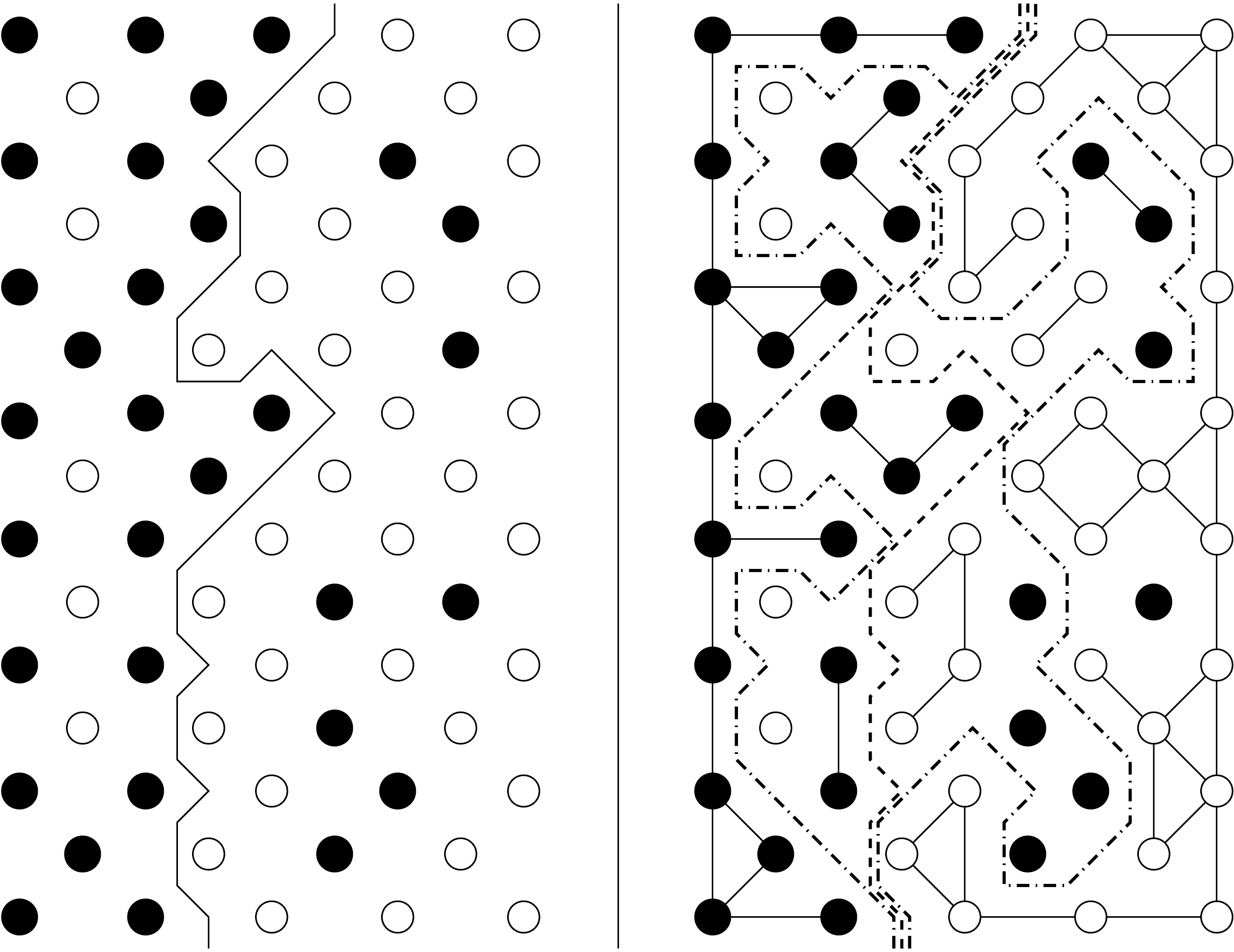}\end{center}
\caption{$+/-$ boundary conditions in the Edwards-Sokal coupling on the centered square lattice. Spins (black/white) are fixed on the boundary. Left panel: spin interface. Right panel: two FK interfaces (dash-dotted) - the boundaries of the two FK clusters attached to the boundary - and the spin interface (dashed).
}\label{Fig:ES2}
\end{figure}
The underlying graph is (a portion of) the centered square lattice $\Z^2\cup\left((\frac  12,\frac 12)+\Z^2\right)$ (with adjacency given by $(m,n)\sim (m\pm 1,n)$, $(m,n)\sim(m,n\pm 1)$, $(m,n)\sim (m\pm\frac 12,n\pm\frac 12)$ for $m,n\in\Z^2$); this is an isoradial triangulation. We can consider changes of boundary conditions from the spin or FK perspective.

In Figure \ref{Fig:ES2}, we consider a $+/-$ boundary condition change (see e.g. XI.3 in \cite{DiF} for a discussion of Ising boundary conditions from a CFT perspective). It gives rise to a single spin interface, with weight $h_{2,1}=\frac 12$; and to two FK interfaces, with weight $h_{1,3}=\frac 12$. 
The position of a spectator bulk point w.r.t. to the spin interfaces gives an observable satisfying the second-order BPZ equation corresponding to the singular vector $\Delta_{2,1}$ (Schramm's formula). Its position w.r.t. the FK interfaces satisfies the third-order BPZ equation derived from $\Delta_{1,3}$.

In Figure \ref{Fig:ES3}, we consider $+/-/$free boundary condition changes, as in \cite{HonKyt_free}.  
\begin{figure}
\begin{center}\includegraphics[angle=90,scale=.4]{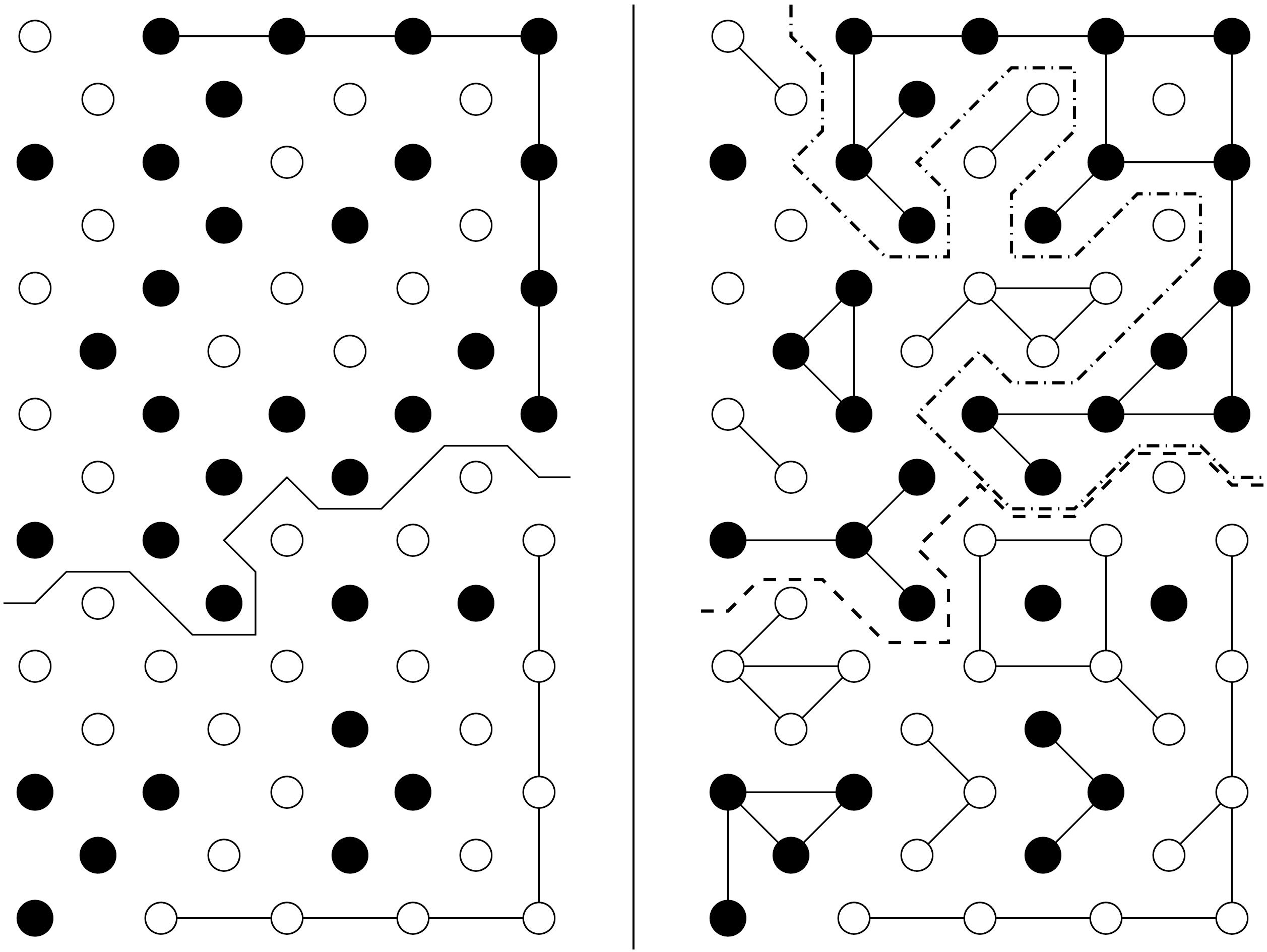}\end{center}
\caption{Boundary conditions: free (left); $+$ (top right); $-$ (bottom right). Left panel: spin interface. Right panel: FK interface (dash-dotted) and spin interface (dashed).
}\label{Fig:ES3}
\end{figure}
The $+$/free bcc (corresponding to an $\SLE_{16/3}$ variant) has weight $h_{1,2}=\frac 1{16}$, and as before the $+/-$ bcc has weight $h_{2,1}=\frac 12$. Upon fusing these two (viz. shrinking the $+$ boundary component to a point), one gets $h_{2,2}=\frac 1{16}=h_{1,2}$ (and indeed we are left with a $-$/free bcc). From the perspective of \cite{Dub_Comm}, this corresponds to the commutation of an $\SLE_3(-\frac 32,-\frac 32)$ with two $\SLE_{16/3}(-\frac 83,2)$'s. Notice that the coincidences of weights $h_{2,1}=h_{1,3}$, $h_{2,2}=h_{1,2}$ are specific to the Ising model (with central charge $c=1/2$).

\subsection{Differential equations for multiple SLE observables}

It will be technically convenient to use as a basic building block a system of $n$ $\SLE$s starting at $n$ distinct boundary points $X_1,\dots,X_n$ and converging to the same point $Y$ (rather than a chordal $\SLE$, as in \cite{Dub_Virloc}). This type of system was constructed in \cite{Dub_Comm}; we summarize their basic properties.

\begin{Prop}\label{Prop:nSLE}
Let $D$ denote a simply-connected domain and $X_1,\dots,X_n,Y$ distinct marked boundary points. For $\kappa\leq 4$, there is a measure $\mu_{(\Sigma,X_1,\dots,X_n,Y)}$ on $n$-tuples of simple paths $(\gamma_1,\dots,\gamma_n)$ disjoint in the bulk, where $\gamma_i$ has endpoints $X_i$ and $Y$, satisfying the following properties.
\begin{enumerate}
\item The partition function
\begin{equation}\label{eq:nlegpartfun}
{\mc Z}_{(D,X_1,\dots,X_n,Y)}=\|\mu_{(D,X_1,\dots,Y)}\|=\prod_{1\leq i<j\leq n}H_D(X_i,X_j)^{-1/\kappa}\prod_{i=1}^nH_D(X_i,Y)^{h_{2,1}+(n-1)/\kappa}
\end{equation}
has weight $h_{2,1}$ at $X_1,\dots,X_n$ and $h_{n+1,1}$ at $Y$.
\item (Conformal invariance) If $\phi: D\rightarrow D'$ is a conformal equivalence,
$$\phi_*\mu_{(D,X_1,\dots,Y)}=\mu_{(\phi(D),\phi(X_1),\dots,\phi(Y))}$$
\item (Restriction property) If $D'\subset D$ is a simply-connected domain agreeing with $D$ in neighborhoods of the marked points,
$$\mu_{(D',X_1,\dots,Y)}=\ind_{\gamma_1,\dots,\gamma_n\subset D'}\exp(\frac c2\nu_D(\cup_i\gamma_i;D\setminus D'))\mu_{(D,X_1,\dots,Y)}$$
\item (Marginals) Under $\mu^\sharp={\mc Z}^{-1}\mu$, the law of $\gamma_i$ is that of an $\SLE_\kappa(2,\dots,2)$ from $X_i$ to $Y$ with force points at $X_j$, $j\neq i$.
\item (Markov property) If $\tau$ is a stopping time for the filtration generated by $\gamma_i$ (running from $X_i$ to $Y$), then under $\mu^\sharp_{(D,X_1,\dots,Y)}$ the conditional law of $(\gamma_1,\dots,\gamma_n)$ given $\gamma_i^\tau$ is that of $(\gamma'_1,\dots,\gamma'_{i-1},\gamma_i^\tau\bullet\gamma'_i,\gamma'_{i+1},\dots)$ where $(\gamma'_1,\dots,\gamma'_n)$ have joint law $\mu^\sharp_{(D\setminus\gamma_i^\tau,X_1,\dots,X_{i-1},(\gamma_i)_\tau,X_{i+1},\dots,Y)}$.
\end{enumerate}
\end{Prop}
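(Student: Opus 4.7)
The plan is to build $\mu_{(D,X_1,\dots,Y)}$ by tilting a single-seed chordal SLE using the partition function ${\mc Z}={\mc Z}_{(D,X_1,\dots,Y)}$ as a Girsanov/Doob weight, and then to use commutation of the associated infinitesimal generators to show that the prescription is independent of the choice of seed. Every item will then follow either from the explicit product form \eqref{eq:nlegpartfun} or from standard Loewner-flow arguments, with the commutation check being the only substantive input.

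\textbf{Analytic preparation.} I would first verify from \eqref{eq:nlegpartfun} that ${\mc Z}$ transforms with weight $h_{2,1}$ at each $X_i$ and $h_{n+1,1}$ at $Y$, which reduces to the arithmetic identities $-\tfrac{n-1}{\kappa}+h_{2,1}+\tfrac{n-1}{\kappa}=h_{2,1}$ (exponent at $X_i$) and $n(h_{2,1}+\tfrac{n-1}{\kappa})=h_{n+1,1}$ (exponent at $Y$), combined with the conformal covariance of the boundary Poisson kernel $H_D$. A direct computation in the half-plane normalization, using $H_\H(x,y)=|x-y|^{-2}$, then shows that ${\mc Z}$ satisfies the level-$2$ null-vector PDE
\[
\Bigl(\tfrac{\kappa}{2}\partial_{X_i}^2+\sum_{j\neq i}\bigl(\tfrac{2}{X_j-X_i}\partial_{X_j}-\tfrac{2h_{2,1}}{(X_j-X_i)^2}\bigr)+\tfrac{2}{Y-X_i}\partial_Y-\tfrac{2h_{n+1,1}}{(Y-X_i)^2}\Bigr){\mc Z}=0
\]
at each seed $X_i$. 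This is the algebraic input for what follows.

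\textbf{Construction and marginals.} For each $i$, I define $\mu^{(i)}$ by running chordal $\SLE_\kappa$ in $(D,X_i,Y)$ and reweighting its Loewner driving by the Radon-Nikodym density
\[
M^{(i)}_t=g_t'(Y)^{h_{n+1,1}}\prod_{j\neq i}g_t'(X_j)^{h_{2,1}}\,\frac{{\mc Z}(g_t(X_1),\dots,g_t(X_n);g_t(Y))}{{\mc Z}(X_1,\dots,X_n;Y)},
\]
with $g_t$ the Loewner map. The null-vector PDE together with It\^o's formula applied under $dX_i=\sqrt\kappa\,dB_t$ shows that $M^{(i)}$ is a positive local martingale, and Girsanov converts the logarithmic derivative of ${\mc Z}$ in the seed $X_i$ into precisely the $\SLE_\kappa(2,\dots,2)$ drift with force points at the remaining $X_j$ and at $Y$. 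Iterating by sampling the remaining strands in the slit domain, and identifying the total mass with \eqref{eq:nlegpartfun} from the boundary asymptotics of ${\mc Z}$, yields items (1) and (4).

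\textbf{Commutation, Markov, restriction.} The main obstacle is to check that this prescription is independent of which strand is explored first and of the intermediate stopping times; this is the commutation theorem of \cite{Dub_Comm}. Writing ${\mc L}_i$ for the generator that grows $\gamma_i$, one checks via coordinate manipulations that the level-$2$ equations at $X_i$ and $X_j$ imply $[{\mc L}_i,{\mc L}_j]{\mc Z}=0$, and a Kolmogorov-type consistency argument then produces a single measure $\mu$ for which item (5) holds by construction. Conformal invariance (2) is immediate from the covariance of ${\mc Z}$ and of chordal SLE. For restriction (3), I would explore $\gamma_1,\dots,\gamma_n$ sequentially and apply at each step the single-curve restriction formula of \cite{LSW3} for $\SLE_\kappa(2,\dots,2)$; the Brownian-loop-soup weights $\exp(\tfrac{c}{2}\nu_D(\gamma_i;D\setminus D'))$ telescope, by additivity of $\nu_D$ over disjoint traces, into the single factor $\exp(\tfrac{c}{2}\nu_D(\cup_i\gamma_i;D\setminus D'))$, with $c=c(\tau)$ the central charge matched to the exponents in \eqref{eq:nlegpartfun}.
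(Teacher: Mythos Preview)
Your sketch is correct in outline and in fact goes well beyond what the paper does: the paper's own proof of this proposition is simply a citation, pointing to \cite{Dub_Comm} (Section 8) for the construction of $\mu^\sharp$ with properties 2, 4, 5, and to Lemma~8.1 there for the restriction property 3, with ${\mc Z}$ then defined by \eqref{eq:nlegpartfun}. What you have written is essentially a reconstruction of the argument in that reference---the Girsanov tilt by the null-vector solution ${\mc Z}$, the commutation check, and the sequential restriction computation---so the approaches coincide, with yours being the expanded version. One small slip: in item~(4) the target $Y$ is not a force point of the $\SLE_\kappa(2,\dots,2)$ (it is the target), so drop ``and at $Y$'' from your description of the Girsanov drift; and in the restriction step the telescoping uses the domain-restriction property of the loop measure (each successive $\nu$ is taken in the already-slit domain), not literal additivity over traces in a fixed domain.
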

Here $\gamma^\tau$ denotes the path $\gamma$ stopped at time $\tau$ and $\bullet$ denotes the concatenation of paths; $H$ denotes the Poisson excursion kernel (see e.g. \cite{Dub_Virloc}), which induces a tensor dependence in local coordinates.
\begin{proof}
The probability measure $\mu^\sharp$ satisfying (and characterized by) 2,4,5 is constructed in \cite{Dub_Comm} (see in particular Section 8, also \cite{Dub_dual}). Define ${\mc Z}$ as in 1.; then 3. is a rephrasing of Lemma 8.1 in \cite{Dub_Comm}.
\end{proof}

From the conformal invariance and restriction properties we can construct an $n$-leg $\SLE$ on any bordered surface $\Sigma$ with marked boundary points $X_1,\dots,X_n,Y$ (and possibly additional markings), using the localization method explained e.g. in \cite{Dub_Virloc}. Then from the Markov property and reasoning again as in \cite{Dub_Virloc}, one obtains, under a local boundedness assumption on ${\mc Z}$, smoothness of ${\mc Z}$ and the null vector equations:
$$\Delta_{2,1}^{X_i}({\mc Z}s_\zeta^c)=0$$
where $\Delta_{2,1}^{X_i}$ refers to the Virasoro representation corresponding to a deformation at $X_i$, $i=1,\dots,n$, and $s_\zeta$ is the reference section of the determinant bundle.

We may also partition the configurations depending on the position of a marked bulk point $Z$ w.r.t. the paths (if we look at $Z$ as a puncture, this is simply decomposing the path space according to isotopy type). For $k=0,\dots,n$, we let ${\mc Z}_k$ be the partition function of systems of paths where $Z$ lies between $\gamma_k$ and $\gamma_{k+1}$ (where by convention $\gamma_0$ - resp. $\gamma_{n+1}$ - is the boundary arc between $X_1$ and $Y_1$ - resp. $X_n$ and $Y_n$). Then ${\mc Z}={\mc Z}_0+\cdots+{\mc Z}_n$. It follows that each ${\mc Z}_k$ is locally bounded (${\mc Z}$ is explicit \eqref{eq:nlegpartfun}) and consequently \cite{Dub_Virloc} is smooth and satisfies $\Delta^{X_i}_{2,1}({\mc Z}_ks_\zeta^c)=0$ at each marked boundary point. Trivially,
$$\mu^\sharp_{(D,X_1,\dots,X_n,Y)}\{Z{\rm\ between\ }\gamma_k{\rm\ and\ }\gamma_{k+1}\}=\frac{{\mc Z}_k}{{\mc Z}}(D,X_1,\dots,X_n,Y)$$

We are now in position to apply fusion, viz. to collapse $X_1,\dots,X_n$ in a single point $X=X_1$; this is done sequentially, first merging $X_2$ with $X_1$, and so on. For now we assume that $\kappa\notin\Q$. We assume $X_1,\dots,X_n,Y$ are in counterclockwise order on $\partial D$.

We have ${\mc Z}(D,X_1,X_2,\dots)=O(|X_2-X_1|^{2/\kappa})$ as $X_2\rightarrow X_1$, and {\em a fortiori} the same holds for ${\mc Z}_k$. Moreover $\Delta^{X_1}_{2,1}({\mc Z}_ks)=0$, $\Delta^{X_2}_{2,1}({\mc Z}_ks)=0$. From Theorem \ref{Thm:fus}, if $z$ is a local coordinate at $X_1$, we can expand
$${\mc Z}_k(D,X_1,X_2,\dots,z,z-z(X_2),\dots)=(z(X_2)-z(X_1))^{2/\kappa}{\mc Z}^{(2)}_k(D,X_1,X_3,\dots,z)(1+o(1))$$
as $X_2\rightarrow X_1$; then ${\mc Z}_k^{(2)}$ has weight $h_{3,1}$ at $X_1$ and satisfies
$$\Delta_{3,1}^{X_1}({\mc Z}^{(2)}_ks_\zeta^c)=0$$
at $X_1$. From Lemma \ref{Lem:defW}, we see that the null vector equations are preserved at $X_3,\dots$:
$$\Delta_{2,1}^{X_i}({\mc Z}^{(2)}_ks_\zeta^c)=0$$
for $i=3,\dots,n$. 

We also observe that ${\mc Z}_k^{(2)}\leq{\mc Z}^{(2)}$, which is itself explicit (a product of powers of Poisson excursion kernels). In particular we see
$${\mc Z}^{(2)}_k(X_1,X_3,\dots)=O(|X_3-X_1|^{h_{4,1}-h_{3,1}-h_{2,1}})$$
and we can apply again Theorem \ref{Thm:fus} to write
$${\mc Z}_k^{(2)}(D,X_1,X_3,\dots,z,z-z(X_3),\dots)=(z(X_3)-z(X_1))^{h_{4,1}-h_{3,1}-h_{2,1}}{\mc Z}^{(3)}_k(D,X_1,X_4,\dots,z)(1+o(1))$$
with
$$\Delta_{4,1}^{X_1}({\mc Z}^{(3)}_ks_\zeta^c)=0$$
Iterating the argument, we finally obtain a completely fused partition function ${\mc Z}_k^{(n)}(D,X,Y)$ satisfying
$$\Delta_{n+1,1}^{X}({\mc Z}^{(n)}_ks_\zeta^c)=0$$
and
$$\mu^\sharp_{(D,X,\dots,X,Y)}\{Z{\rm\ between\ }\gamma_k{\rm\ and\ }\gamma_{k+1}\}=\frac{{\mc Z}^{(n)}_k}{{\mc Z}^{(n)}}(D,X,Y,Z)$$
We check that ${\mc Z}^{(n)}\propto H_D^{h_{n+1,1}}s_\zeta^c$ (e.g. by evaluating in the upper half-plane with the standard local coordinate).

In the standard model $(D,X,Y,Z)=(\H,0,\infty,e^{i\theta})$, $\theta\in (0,\pi)$, we write
$$f_k(\theta)=\mu^\sharp_{(\H,0,\dots,0,\infty)}\{e^{i\theta}{\rm\ between\ }\gamma_k{\rm\ and\ }\gamma_{k+1}\}=\frac{{\mc Z}^{(n)}_k}{{\mc Z}^{(n)}}(\H,0,\infty,e^{i\theta})$$
We now want to translate the null vector equation for ${\mc Z}_k$ into an ODE for $f_k$. We proceed as in \cite{Dub_Virloc} (in particular Section 4.4), with a modification due to the fact that we have a spectator point $Z$ in the bulk (rather than several on the boundary).

Computationally it is slightly more convenient to consider
$$f_k(z)={\mc Z}_k(\H,0,\infty,z)$$
where the RHS is evaluated w.r.t. the standard local coordinates at $0$ and $\infty$ in $\H$, so that $f_k(\lambda z)=f_k(z)$ for $\lambda>0$. If we evaluate $L_m{\mc Z}_k$ in the same coordinates, we get $\ell_m^0f_k$, where
$$\ell_m^0=-\Re(z^{m+1})\partial_u-\Im(z^{m+1})\partial_v=-z^{m+1}\partial_z-{\bar z}^{m+1}\partial_{\bar z}=-r^{m+1}\cos(m\theta)\partial_r-r^m\sin(m\theta)\partial_\theta$$
where $z=u+iv=re^{i\theta}$, $\partial_z=\frac 12(\partial_u-i\partial_v)$, $\partial_{\bar z}=\frac 12(\partial_u+i\partial_v)$, $\partial_r=\cos(\theta)\partial_u+\sin(\theta)\partial_v$, $\partial_\theta=-r\sin(\theta)\partial_u+r\cos(\theta)\partial_v$. 

Although the polar coordinates are quite natural, in order to have rational coefficients one may set $t=\cot(\theta/2)$, so that
$$\cos(\theta)=\frac{t^2-1}{t^2+1},{\rm\  \  \ }\sin(\theta)=\frac{2t}{t^2+1},{\rm\  \  \ }\partial_\theta=-\frac{t^2+1}2\partial_t$$  
Then if $m<0$
$$\ell_m^0=-r^{m+1}T_{|m|}\left(\frac{t^2-1}{t^2+1}\right)\partial_r-r^mU_{|m|-1}\left(\frac{t^2-1}{t^2+1}\right)t\partial_t$$
where $T_p,U_p$ designate the Chebychev polynomials of the first and second kind, viz.
\begin{align*}
T_p(\cos(\theta))&=\cos(p\theta)\\
U_p(\cos(\theta))&=\frac{\sin((p+1)\theta)}{\sin(\theta)}
\end{align*}
for $p\geq 0$. We further substitute $s=-t^2$ (in terms of hypergeometric equations, this corresponds to a Goursat substitution) to obtain
$$\ell_m^0=-r^{m+1}T_{|m|}\left(\frac{s+1}{s-1}\right)\partial_r-r^mU_{|m|-1}\left(\frac{s+1}{s-1}\right)2s\partial_s$$
As explained in \cite{Dub_Virloc}, the null vector equation gives
$$\Delta_{n+1,1}^0f_k=0$$
where $\Delta_{n+1,1}^0$ (a differential operator) is obtained from $\Delta_{n+1,1}$ - an explicit element of ${\mc U}(\Vir^-)$, given by the Benoit--Saint-Aubin formula \eqref{eq:BSA} - by substituting $\ell^0_m$ for $L_m$. In the variables $r,s$, the coefficients of this differential operator are Laurent polynomials in $r,\kappa$ and rational in $s$ (with poles at $1$). 

By homogeneity we see that 
$$\Delta_{n+1,1}^0=r^{-n-1}{\mc D}_{n+1}+(\dots)\partial_r$$
where ${\mc D}_{n+1}\in\R[s,(s-1)^{-1}]\partial_s$ (or more precisely in $\Q[\kappa,\kappa^{-1},s,(s-1)^{-1}]\partial_s$). Since $f_k$ depends only on $s$ we get ${\mc D}_{n+1}f_k=0$.

We can also study the nature of singularities at $0,1,\infty$. For background on Fuchsian equations, see e.g. \cite{Yosh}.

\begin{Lem}
The differential operator ${\mc D}_{n+1}$ is Fuchsian (viz. has only regular singular points).
\end{Lem}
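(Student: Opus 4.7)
The only candidate singular points of $\mathcal{D}_{n+1}$ are $s=0,1,\infty$: by the stated structure, the coefficients $a_j(s)$ have at worst poles at $s=1$ and grow polynomially at infinity, and (as will be seen) $a_{n+1}(s)$ vanishes to order $n+1$ at $s=0$. The plan is to verify the Fuchsian pole-order condition $\mathrm{ord}_{s_0}(a_j)\geq \mathrm{ord}_{s_0}(a_{n+1})-(n+1-j)$ at each of the three candidate points by explicit bookkeeping on the operators $\ell^0_{-m}$ restricted to homogeneous functions.

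First I would pin down the leading coefficient $a_{n+1}(s)$. Restricted to $r^\beta g(s)$, each $\ell^0_{-m}$ becomes a first-order differential operator in $\partial_s$, so a composition $\ell^0_{-n_1}\cdots\ell^0_{-n_k}$ has order at most $k$ in $\partial_s$. Hence only the term $\ell_{-1}^{n+1}$ in the Benoit--Saint-Aubin expansion \eqref{eq:BSA} (whose coefficient is $1$) contributes to $\partial_s^{n+1}$, and a short induction on $n$ gives $a_{n+1}(s)=(-2s)^{n+1}$.

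For the subleading coefficients, write $\ell_{-m}^0(r^\beta g)=r^{\beta-m}\hat\ell_{-m,\beta}(g)$ where
\[
\hat\ell_{-m,\beta}(g)=-\beta\,T_m\!\left(\frac{s+1}{s-1}\right)g\;-\;2s\,U_{m-1}\!\left(\frac{s+1}{s-1}\right)g'.
\]
The behavior of the Chebyshev polynomials at $\pm 1$ and $\infty$ yields three single-factor bounds: at $s=1$, the coefficient of $g$ has pole of order $\leq m$ and the coefficient of $g'$ has pole of order $\leq m-1$; at $s=0$, the coefficient of $g'$ vanishes to first order while the coefficient of $g$ is regular; at $s=\infty$, the coefficient of $g$ is bounded while the coefficient of $g'$ grows linearly. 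An induction on the length of composition $\hat\ell_{-n_k}\cdots\hat\ell_{-n_1}$ with $\sum n_i=N$ then yields three parallel bounds on the coefficient of $g^{(j)}$: pole at $s=1$ of order $\leq N-j$, zero at $s=0$ of order $\geq j$, and degree at $s=\infty$ at most $j$.

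Applied term-by-term to \eqref{eq:BSA} with $N=n+1$, and combined with $a_{n+1}(s)=(-2s)^{n+1}$, these bounds give the Fuchsian condition at $s=0$ and $s=1$ directly, and at $s=\infty$ after the standard substitution $s=1/u$. The hardest step will be the inductive bookkeeping, since chain-rule derivatives generate extra terms at each composition step; however, differentiating once an $A$- or $B$-factor either increases the pole order at $s=1$ by one, or decreases the vanishing order at $s=0$ by one, or decreases the degree at $\infty$ by one, and in each case this loss is exactly compensated by the corresponding reduction in $\partial_s$-order, so all three estimates propagate through composition.
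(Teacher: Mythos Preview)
Your proposal is correct and follows essentially the same approach as the paper's proof: both identify the candidate singularities as $0,1,\infty$, extract the leading coefficient $(-2s)^{n+1}$ from the $L_{-1}^{n+1}$ term of the Benoit--Saint-Aubin formula, and verify the Fuchsian pole-order bound by tracking, for each factor $\ell^0_{-m}$, how the pole order at $1$ and the $\partial_s$-degree interact. The paper is terser (at $0$ and $\infty$ it rewrites the operator in terms of $(s\partial_s)^i$ rather than tracking orders directly), while you spell out the inductive bookkeeping across compositions more explicitly; but the substance is the same.
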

\begin{proof}
The only possible singularities are at $0,1,\infty$. Recall that
$$\sum_{i=0}^Np_i(s)\partial_s^i$$
is regular singular (or regular) at $a$ iff the coefficients $p_i$ are rational with a pole of order at most $N-i$ at $a$, and $p_N(a)\neq 0$.

At $0$ it is clear we can write
$${\mc D}_{n+1}=\sum_{i=0}^{n+1}P_i(s)(s\partial_s)^i$$
where $P_i$ a rational fraction regular at $0$; thus $0$ is a regular singular point. Note that the leading order term is $(-2s\partial_s)^{n+1}$, coming from the term $L_{-1}^{n+1}$ in $\Delta_{n+1,1}$.

We observe that in $\ell^0_m$, in both terms the order of the pole at 1 and the degree of $\partial_s$ add up to at most $-m$. Consequently $P_i$ has a pole of order $\leq n+1-i$ at $-1$, and $-1$ is a regular singularity.

At $\infty$, we substitute $s'=s^{-1}$ and get $s'\partial_{s'}=-s\partial_s$. Reasoning as for the singularity at $0$, we see that the singularity at infinity is regular. 
\end{proof}

In order to apply Theorem \ref{Thm:fus}, we used the (technical) condition $\kappa\notin\Q$ (from the discussion after the said Theorem, we already know this can be dispensed with if $n=2$). We are going to reason by density in $\kappa$, starting with the

\begin{Lem}\label{Lem:dens}
Let $\rho\geq 0$ and $\kappa_n\nearrow\kappa_\infty\leq 4$. Let $\gamma_n$ be (the trace of) an $\SLE_{\kappa_n}(\rho)$ from $(0,0^+)$ to $\infty$ in the upper half-plane $\H$. Let $\theta\in (0,\pi)$ and $\Theta_n\in (0,\pi]$ be: $\pi$ if $e^{i\theta}$ is to the left of $\gamma_n$; and otherwise s.t. 
$((\H\setminus\gamma)^r,0,\infty,e^{i\theta})$ is conformally equivalent to $(\H,0,\infty,e^{i\Theta})$, where $(\H\setminus\gamma)^r$ is the connected component of $\H\setminus\gamma$ to the right of $\gamma$. Then $\Theta_n$ converges in law to $\Theta_\infty$ as $n$ goes to infinity.
\end{Lem}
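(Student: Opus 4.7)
The strategy is coupling: realize all the $\SLE_{\kappa_n}(\rho)$'s on a common probability space by driving their defining SDE's with a single Brownian motion $B$, show that $\Theta_n \to \Theta_\infty$ almost surely under this coupling, and deduce convergence in law from a.s.\ convergence.

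First I would couple the driving functions. For $\SLE_{\kappa_n}(\rho)$ from $(0, 0^+)$ to $\infty$, solve the defining SDE using the common $B$. Setting $V_t^{(n)} = X_t^{(n)} - Y_t^{(n)}$, the process $V^{(n)}/\sqrt{\kappa_n}$ is (up to a trivial time change) a Bessel process of dimension $\delta_n = 1 + 2(\rho+2)/\kappa_n \geq 2$ started at $0$ and driven by $B$. Continuity of the Bessel flow in the driving BM and in the dimension parameter (in the spirit of the Bessel flow estimates of Section 4.2, cf.\ \cite{Vostr_Besflow,RY}) gives $V^{(n)} \to V^{(\infty)}$ locally uniformly in probability, hence also $X^{(n)} \to X^{(\infty)}$. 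Rohde-Schramm continuity of the Loewner trace with respect to its driving function, combined with $\kappa_\infty \leq 4$ (so that the traces are simple curves), then yields $\gamma_n \to \gamma_\infty$ in capacity parametrization, locally uniformly on compact time intervals, in probability.

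Second, a.s.\ $\gamma_\infty$ has zero two-dimensional Lebesgue measure (it is a simple curve for $\kappa_\infty\le 4$), so $e^{i\theta}$ a.s.\ lies in the open interior of one of the two complementary components of $\H \setminus \gamma_\infty$. On the event that it lies in the right component, there exists an a.s.\ finite capacity time $T = T(\omega)$ such that $\gamma_\infty|_{[T, \infty)}$ is disjoint from a closed disc $D$ around $e^{i\theta}$ (using transience of $\gamma_\infty$ to $\infty$); by the previous step, the same separation persists for $\gamma_n$ for $n$ large. Carathéodory convergence of the right complementary components with base point $e^{i\theta}$ follows, and hence the conformal maps $\psi_n: (\H \setminus \gamma_n)^r \to \H$ normalised by $0,\infty \mapsto 0,\infty$ converge at $e^{i\theta}$, giving $\Theta_n \to \Theta_\infty$. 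On the dual event, the analogous argument gives $\Theta_n = \pi = \Theta_\infty$ eventually. Either way, $\Theta_n \to \Theta_\infty$ a.s.\ under the coupling, whence convergence in law.

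The main obstacle is bridging the local-in-capacity-time continuity of the Loewner trace with the \emph{global} conformal invariant $\Theta_n$, which depends on the entire trace to $\infty$; this is handled by the separation argument in the second paragraph, which reduces the problem to a finite (random) capacity time. A secondary subtlety is the continuity of the Bessel flow in its dimension parameter, which requires a coupling argument compatible with the singular drift $1/V$; here the assumption $\delta_n\ge 2$ (transience) is what makes the flow well-behaved and the comparison across $n$ tractable.
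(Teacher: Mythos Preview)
Your overall strategy---coupling all the $\SLE_{\kappa_n}(\rho)$'s via a common Brownian motion and deducing convergence in law from convergence under the coupling---is exactly the paper's. The difference is in how the passage from local-in-time information to the global invariant $\Theta_n$ is handled, and here your argument has a gap.

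You write: ``there exists an a.s.\ finite capacity time $T$ such that $\gamma_\infty|_{[T,\infty)}$ is disjoint from a closed disc $D$ around $e^{i\theta}$; by the previous step, the same separation persists for $\gamma_n$ for $n$ large.'' But the previous step only gives $\gamma_n\to\gamma_\infty$ on \emph{compact} time intervals. It says nothing about $\gamma_n|_{[T',\infty)}$ for any fixed $T'$, and in particular does not preclude $\gamma_n$ making a late excursion back near $e^{i\theta}$. You correctly flag this local-to-global passage as ``the main obstacle,'' but the proposed resolution does not close it: what is missing is a tail estimate \emph{uniform in $n$}, and none is supplied. The same issue arises on the dual event (why should $e^{i\theta}$ eventually lie to the left of $\gamma_n$ if it lies to the left of $\gamma_\infty$?).

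The paper deals with both points differently. First, it bypasses trace convergence altogether (the continuity of the Loewner trace in its driver is delicate and is not what Rohde--Schramm prove): instead it works with the angle process $\theta^n_t$ defined by $(\H\setminus\gamma^n_{[0,t]},0,\infty,e^{i\theta})\simeq(\H,0,\infty,e^{i\theta^n_t})$, which is a continuous functional of the Bessel data $(X^n,\int_0^{\cdot}ds/X^n_s)$ and hence converges for each fixed $t$ under the monotone Bessel coupling. Second, for the tail it argues by scaling that, with high probability, after first exiting $D(0,R)$ none of the curves returns to $D(0,\sqrt R)$; on that event a Beurling estimate gives $|\Theta^n-\theta^n_\tau|=O(R^{-1/4})$, and this bound is what makes $\theta^n_t$ close to $\Theta^n$ \emph{uniformly in $n$}. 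That uniform tail control is precisely the ingredient your separation argument lacks.
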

\begin{proof}
Let $\delta_n=1+2\frac{\rho+2}{\kappa_n}$, so that $\delta_n\searrow\delta\geq 2$. We can couple monotonically Bessel processes $(X^n_t)_{n\in\N,t\geq 0}$ of dimension $\delta_n$ (e.g. by the additivity property of Bessel processes), i.e. $t\mapsto X^n_t$ is a Bessel process of dimension $\delta_n$, and $X^n\searrow X^\infty$ uniformly on compact subsets a.s. 

In particular, $\int_0^tds/X^\infty_s$ is a.s. finite for finite $t$ and then
$\int_0^tds/X^n_s\nearrow\int_0^tds/X^\infty_s$ (a.s. uniformly in $t$ on compact time intervals).

From the $X^n$'s we can construct a sequence of coupled $\SLE_{\kappa_n}(\rho)$'s. Let $\theta_t^n$ be s.t. $(\H\setminus\gamma^n_{[0,t]},0,\infty,e^{i\theta})$ is conformally equivalent to $(\H,0,\infty,e^{i\theta_t})$. Then $\theta^n_t$ is a continuous functional of the processes $(X^n_{t'},\int_0^{t'} ds/X^n_s)_{t'\leq t}$. Consequently the law of $\theta_t^n$ converges weakly to that of $\theta^\infty_t$ for fixed $t$. 

We let $\Theta^n=\lim_{t\rightarrow\infty}\theta^n_t$ (in particular, $\Theta^n=\pi$ iff $e^{i\theta}$ is to the left of $\gamma^n$). Let $\tau$ be the first time $\gamma=\gamma^\infty$ exits $D(0,R)$, $R\gg 1$. By scaling, for $t\gg R^2$, with high probability $\tau\leq t$ and then $\gamma$ does not return to $D(0,\sqrt R)$ after $\tau$ (otherwise, again by scaling, $\gamma$ would be non-simple with positive probability).

If $n$ is large enough (depending on the realization of the coupling), $\gamma^n_\tau$ is outside of $D(0,R-1)$. By harmonic measure arguments (Beurling estimate), we see that $|\Theta^n-\theta_\tau^n|=O(R^{-1/4})$ unless $\gamma^n$ returns to $D(0,\sqrt R)$ after $\tau$. Hence w.h.p. $\theta^n_t$ is close to $\Theta^n$ uniformly in $n$, which concludes.

\end{proof}

We can now conclude.

\begin{Thm}\label{Thm:Schmult}
Let $\kappa\in (0,4]$ and $\gamma_1,\dots,\gamma_n$ be a multiple $\SLE$ from $0$ to infinity in $\H$. For $k=0,\dots,n$, let
$$f_k(\theta)=\P\{e^{i\theta}{\rm\ between\ }\gamma_k{\rm\ and\ }\gamma_{k+1}\}$$
Then $f_k$ satisfies ${\mc D}_{n+1}f_k=0$, which is Fuchsian differential equation of degree $n+1$ in the variable $s=-\cot^2(\theta/2)$.
\end{Thm}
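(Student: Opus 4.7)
The plan is that almost all the work has been carried out in the development preceding the statement: for $\kappa\notin\Q$, iterated fusion of the $n$ seeds $X_1,\dots,X_n$ by sequential applications of Theorem \ref{Thm:fus} produces a smooth partition function ${\mc Z}^{(n)}_k$ on the Teichm\"uller space of $(\Sigma,X,Y,Z)$, with weight $h_{n+1,1}$ at $X$ and satisfying $\Delta^X_{n+1,1}({\mc Z}^{(n)}_ks_\zeta^c)=0$. The uniform bound ${\mc Z}_k\leq{\mc Z}$ together with the explicit factorized form of ${\mc Z}$ provides the $O((z(X_{j+1})-z(X_1))^{h_{j+2,1}-h_{j+1,1}-h_{2,1}-\eps})$ growth needed to invoke Theorem \ref{Thm:fus} at each step, and Lemma \ref{Lem:defW} propagates the level-2 null-vector equations at the remaining seeds. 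Trivializing in the standard chart $(\H,0,\infty,e^{i\theta})$, the image of $\Delta_{n+1,1}$ in this representation acts only through the operators $\ell^0_m$ written above, and separation of the $r$ and angular variables isolates the Fuchsian operator ${\mc D}_{n+1}\in\Q[\kappa,\kappa^{-1},s,(s-1)^{-1}]\partial_s$. Regularity of the three singular points has already been verified in the preceding lemma.

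What remains is the extension from $\kappa\notin\Q$ to all $\kappa\in(0,4]$. The key observation is that by the Benoit--Saint-Aubin formula \eqref{eq:BSA}, the coefficients of ${\mc D}_{n+1}$ are Laurent polynomials in $\kappa$; hence the map $\kappa\mapsto{\mc D}_{n+1}$ is continuous (indeed analytic) in $\kappa$. It therefore suffices to verify that $\kappa\mapsto f_k^{(\kappa)}(\theta)$ is continuous on $(0,4]$ for each fixed $\theta\in(0,\pi)$, and then pass to the limit in the identity ${\mc D}_{n+1}^{(\kappa)}f_k^{(\kappa)}=0$ (the equation, being a linear ODE on an open interval, passes to limits as soon as coefficients and solutions do).

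Continuity of $f_k^{(\kappa)}$ in $\kappa$ is where I would use Lemma \ref{Lem:dens}. The plan is to couple approximating multiple SLEs monotonically via their driving Bessel-type processes, as in the proof of that lemma, so that the strands $\gamma_1^{\kappa_m},\dots,\gamma_n^{\kappa_m}$ converge locally uniformly to $\gamma_1^{\kappa_\infty},\dots,\gamma_n^{\kappa_\infty}$ (this requires building the system one strand at a time, conditionally, using property 4 of Proposition \ref{Prop:nSLE}). The event $\{e^{i\theta}\text{ between }\gamma_k\text{ and }\gamma_{k+1}\}$ is a continuity event for the coupled system (the boundary event, that $e^{i\theta}$ lies on some $\gamma_i^{\kappa_\infty}$, has probability zero since $\gamma_i^{\kappa_\infty}$ has zero Lebesgue measure in the bulk), so $f_k^{(\kappa_m)}(\theta)\to f_k^{(\kappa_\infty)}(\theta)$.

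The main obstacle, to my mind, is the last point: pushing Lemma \ref{Lem:dens} (stated for a single $\SLE_\kappa(\rho)$) to the full multiple-SLE setting. I expect this to be routine but not entirely painless: one would iterate the Bessel monotone coupling along the sequence of marginal descriptions in property 4 of Proposition \ref{Prop:nSLE}, keeping track of the fact that each successive strand is sampled in the slit domain left by the previous ones, and combining with harmonic-measure/Beurling-type control as in Lemma \ref{Lem:dens} to handle large-radius tails. Once this continuity is in hand, the theorem follows by combining the irrational-$\kappa$ null-vector ODE already derived above with the Fuchsian-character lemma and the density argument, since the statement to be proved involves no genericity hypothesis on $\kappa$.
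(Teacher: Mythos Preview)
Your proposal is correct and follows essentially the same route as the paper: the irrational-$\kappa$ case is already done by iterated fusion, and the extension to all $\kappa\in(0,4]$ is by continuity of both the operator (Laurent polynomial coefficients) and the probabilities $f_k^{(\kappa)}(\theta)$, the latter obtained by iterating Lemma~\ref{Lem:dens} along the marginal description of Proposition~\ref{Prop:nSLE}.4. The paper implements the iteration exactly as you anticipate, recording at each step an angle $\Theta_k$ such that $((\H\setminus\gamma_k)^r,0,\infty,e^{i\theta})\simeq(\H,0,\infty,e^{i\Theta_k})$ and rewriting the event as $\{\Theta_k<\pi,\Theta_{k+1}=\pi\}$.

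One point you pass over a little quickly: ``the equation passes to limits as soon as coefficients and solutions do'' needs pointwise convergence to be upgraded to $C^{n+1}$ convergence before you can conclude ${\mc D}_{n+1}f_k=0$. The paper does this by the standard bootstrap: the $f_k^{(\kappa_j)}$ are bounded by $1$, and the ODE (with leading coefficient normalized to $1$) expresses the top derivative in terms of lower ones with coefficients continuous in $\kappa$, yielding equicontinuity of all derivatives up to order $n+1$; one then extracts a subsequence converging in $C^{n+1}$ on compact subsets. This is routine, but worth stating explicitly.
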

This follows from \cite{Sch_percform} when $n=1$ and is discussed from a CFT perspective in \cite{CarGam_fus}; the case $c=0$, $n=2$ is treated in \cite{BelVik_bub}.
\begin{proof}
We just need to lift the condition $\kappa\notin\Q$. Remark that the coefficients of ${\mc D}_{n+1}$ are continuous (actually, Laurent polynomials) in $\kappa$. 

Then we argue that if $\kappa_j\nearrow\kappa$, then $f_k^{\kappa_j}(\theta)\rightarrow f_k(\theta)$. To see this, we start by sampling the leftmost strand $\gamma_1$, which is an $\SLE_\kappa(\rho)$ with $\rho=2(n-1)$. Then $((\H\setminus\gamma_1)^r,0,\infty,e^{i\theta})$ is equivalent to $(\H,0,\infty,e^{i\Theta_1})$, and by Lemma \ref{Lem:dens} the law of $\Theta_1$ converges as $j\rightarrow\infty$. We iterate and construct angles $(\Theta_1,\dots,\Theta_n)$, the joint distribution of which converges; here $\Theta_k$ is s.t. $((\H\setminus\gamma_k)^r,0,\infty,e^{i\theta})$ is equivalent to $(\H,0,\infty,e^{i\Theta_k})$. (Notice that the law of $\Theta_1$ is continuous in $\theta$, since $e^{i\theta}$ is a.s. at positive distance of  the $\gamma_i$'s). We have
$$\{e^{i\theta}{\rm\ between\ }\gamma_k{\rm\ and\ }\gamma_{k+1}\}
=\{\Theta_{k}<\pi,\Theta_{k+1}=\pi\}$$
which gives $f_k^{\kappa_j}(\theta)\rightarrow f_k(\theta)$.

Then $f_k$ is a simple limit of the $f_k^{\kappa_j}$, where we choose $\kappa_j$ to be irrational; and we have
$${\mc D}_{n+1}^{\kappa_j}f_k^{\kappa_j}=0$$
Since the coefficients of this linear differential operator (and their derivatives at any order) are continuous in $\kappa$ (choosing the leading coefficient to be $1$) and the $f_k$'s are bounded by $1$, it follows easily that they are equicontinuous, and so are their iterated derivatives. Consequently, up to extracting a subsequence we may assume that $f_k^{\kappa_j}$ converges in $C^{n+1}$ on compact subsets of $(0,\pi)$, and the limit $f_k$ is a solution of ${\mc D}_{n+1}f_k=0$.
\end{proof}

As explained after Proposition \ref{Prop:nSLE}, we can consider a more general situation, with $(\Sigma,X_1,\dots,X_n,Y,\dots)$ a bordered surface with marked boundary points $X_1,\dots,X_n,Y$ (and possibly additional markings). To this surface one associates by localization a measure $\mu_{(\Sigma,\dots)}$ on $n$-leg $\SLE$s started at the $X_i$'s and ending at $Y$ and the partition function ${\mc Z}(\Sigma,\dots)=\|\mu_{(\Sigma,\dots)}\|$. If $c\leq 0$, one can ensure finiteness by lifting to the the universal cover, as in the $n=1$ case \cite{Dub_Virloc}. Then we have the null vector equations $\Delta_{2,1}^{X_i}({\mc Zs})=0$, $i=1,\dots,n$. 

One naturally expects that the fused partition function ${\mc Z}^{(n)}(\Sigma,X,Y,\dots)$ satisfies $\Delta_{n+1,1}^{(n)}({\mc Z}^{(n)}s)=0$. Given Theorem \ref{Thm:fus}, the remaining (technical) difficulties consist of (a) ensuring {\em a priori} estimates of type
$${\mc Z}(X_1,X_2,\dots)=O(|z(X_2)-z(X_1)|^{h_{3,1}-2h_{2,1}-\eps})$$
and (b) lifting the $\kappa\notin\Q$ condition. The argument given for (a) in the special case of Theorem \ref{Thm:Schmult} extends {\em mutatis mutandis} to the situation where the $n$ strands are isotopic to the same simple path from $X$ to $Y$; in that case (and when $\kappa\notin\Q$) we get the null vector equation $\Delta_{n+1,1}({\mc Z}s)=0$.

For (b), we gave a direct argument after Lemma \ref{Lem:algfus} in the case $n=2$. We expect Lemma \ref{Lem:algfus} and consequently Theorem \ref{Thm:fus} to hold without this condition for general $n$.

Remark that, by \cite{Dub_Comm}, we may grow $\gamma_k$ alone, and its marginal law is that of an $\SLE_\kappa(\rho_l,\rho_r)$, where $(\rho_l,\rho_r)=2(k-1,n-k)$. For such an $\SLE$ with seed at $x$ and left (resp. right) force points at $0$ (resp. $1$), one may define 
$$g_k(x,z)=\P\{z{\rm\ to\ the\ left\ of\ }\gamma_k\}$$
Then $g_k$ satisfies a second-order PDE in three variables $x\in(0,1)$, $z\in\H$, with $g_k\rightarrow 1$ (resp. 0) when $z\rightarrow (-\infty,x)$ (resp. $(x,\infty)$). We also have $(f_1+\cdots+f_{k-1})(e^{i\theta})=\lim_{R\rightarrow\infty}g_k(x,Re^{i\theta})$. Hence in principle the $f_k$'s may be recovered sequentially by degenerating solutions of PDEs; it is however very unclear from this elementary argument that the $f_k$'s span the solution space of the (computable) Fuchsian ODE ${\mc D}_{n+1}$.

\section{Higher-order BPZ differential equations}

In this section we show how to generate a BPZ differential equation for arbitrary $(r+1,s+1)\in(\N^*)^2$ by collapsing $r+s$ second-order BPZ equations. Weights are parameterized as in \eqref{eq:param}. In a nutshell, the argument is to lift a system of $r+s$ BPZ equations to a representation of $r+s$ copies of the Virasoro algebra; operate fusion at that level, and go down to a higher-order BPZ equation in fewer variables. This gives another example where the machinery of Virasoro uniformization and representation theory produces a concrete analytic statement, and also shows how to obtain of the type of Theorem \ref{Thm:Schmult} in the presence of systems of commuting $\SLE$s with dual $\kappa$'s (as in Figures \ref{Fig:ES2} and \ref{Fig:ES3}), under suitable regularity assumptions.

We consider $(\H,x_1,\dots,x_r,y_1,\dots,y_s,z_1,\dots,z_n,\infty)$ the upper half-plane with $r+s+n+1$ marked boundary points. (Notice that we do not fix the translation and scaling degrees of freedom, which makes for simpler formulae). The $x$'s may be thought of a seeds of $\SLE$s, the $y$'s as seeds of dual $\SLE$s, and the $z$'s as spectator points. We associate weights to these marked points: $h_{2,1}$ for $x_1,\dots,x_r$, $h_{1,2}$ for $y_1,\dots,y_s$, and arbitrary weights $h_1,\dots,h_n$ for $z_1,\dots,z_n$. We also assume for definiteness
$$x_1<\cdots<x_r<y_1<\cdots<y_s<z_1<\cdots z_n.$$

Define differential operators $D_{2,1}^i$, $i=1,\dots,r$ and $D_{1,2}^j$, $j=1,\dots,s$ by
\begin{align*}
D_{2,1}^i&=\partial_{x_i}^2
+\tau\sum_{\substack{1\leq k\leq r\\k\neq i}}\left(\frac 1{x_k-x_i}\partial_{x_k}
-\frac{h_{2,1}}{(x_k-x_i)^2}
\right)
+\tau\sum_{\ell=1}^s\left(\frac{1}{y_\ell-x_i}\partial_{y_\ell}
-\frac{h_{1,2}}{(y_\ell-x_i)^2}
\right)
+\tau\sum_{k=1}^n\left(\frac{1}{z_k-x_i}\partial_{z_k}
-\frac{h_k}{(z_k-x_i)^2}
\right)
\\
D_{1,2}^j&=\partial_{y_j}^2
+\frac 1\tau\sum_{k=1}^r\left(\frac 1{x_k-y_j}\partial_{x_k}
-\frac{h_{2,1}}{(x_k-y_j)^2}
\right)
+\frac 1\tau\sum_{\substack{1\leq\ell\leq s\\ \ell\neq j}}\left(\frac{1}{y_\ell-y_j}\partial_{y_\ell}
-\frac{h_{1,2}}{(y_\ell-y_j)^2}
\right)
+\frac 1\tau\sum_{k=1}^n\left(\frac{1}{z_k-y_j}\partial_{z_k}
-\frac{h_k}{(z_k-y_j)^2}
\right)
\end{align*} 
We are interested in translation-invariant, homogeneous solutions of $D_{2,1}^iZ=D_{1,2}^jZ=0$ for all $i,j$. The degree of homogeneity depends on the (implicit) weight of the marked point at infinity. Notice that e.g. in $D_{2,1}^1$, the variables $x_2,\dots,y_s,z_1,\dots,z_n$ play the same role (and differ only by their weights).

An elementary positive solution is given by
$$Z_0=\prod_{1\leq i<j\leq r}(x_j-x_i)^{\frac\tau 2}\prod_{1\leq i<j\leq s}(x_j-x_i)^{\frac{\tau^{-1}} 2}\prod_{\substack{1\leq i\leq r\\ 1\leq j\leq s}}(y_j-x_i)^{-\frac 12}
\prod_{\substack{1\leq i\leq r\\ 1\leq k\leq n}}(z_k-x_i)^{a_k}
\prod_{\substack{1\leq j\leq s\\ 1\leq k\leq n}}(z_k-y_j)^{b_k}
\prod_{1\leq k<k'\leq n}(z_{k'}-z_k)^{2\tau b_kb_{k'}}
$$
where the $a_k$, $b_k$ solve
\begin{align*}
a_k(a_k-1)+\tau(a_k-h_k)&=0\\
b_k(b_k-1)+\tau^{-1}(b_k-h_k)&=0\\
a_k+\tau b_k&=0
\end{align*}
for $k=1,\dots,n$ (the first two equations are redundant given the third). In particular, we can take all the $a,$'s, $b$'s and $h$'s to be zero. When evaluating e.g. $Z_0^{-1}D_{2,1}^1Z_0$, one checks that all the double poles are removable, and the other terms cancel out thanks to the identity
$$\frac{1}{(u-v)(v-w)}+\frac{1}{(v-w)(w-u)}+\frac{1}{(w-u)(u-v)}$$
Remark that
\begin{align*}
\frac\tau 2&=h_{3,1}-2h_{2,1}\\
\frac{\tau^{-1}}2&=h_{1,3}-2h_{1,2}\\
-\frac 12&=h_{2,2}-h_{1,2}-h_{2,1}
\end{align*}
We can also consider the conjugate operators
\begin{align*}
{\mc L}^i=Z_0^{-1}D_{2,1}^iZ_0&=
\partial_{x_i}^2+\tau\sum_{1\leq k\leq r,k\neq i}\left(\frac 1{x_k-x_i}\right)
(\partial_{x_k}-\partial_{x_i})
+\sum_{\ell=1}^s\left(\frac{\tau}{y_\ell-x_i}\partial_{y_\ell}
-\frac{1}{x_i-y_\ell}\partial_{x_i}
\right)\\
&+\sum_{k=1}^n\left(\frac{\tau}{z_k-x_i}\partial_{z_k}
+\frac{2a_k}{x_i-z_k}\partial_{x_i}
\right)\\
{\mc M}^j=Z_0^{-1}D_{1,2}^jZ_0&=
\partial_{y_j}^2+\tau^{-1}\sum_{1\leq \ell\leq s,\ell\neq j}\left(\frac 1{y_\ell-y_j}\right)
(\partial_{y_\ell}-\partial_{y_j})
+\sum_{k=1}^r\left(\frac{\tau^{-1}}{x_k-y_j}\partial_{x_k}
-\frac{1}{y_j-x_k}\partial_{y_j}
\right)\\
&+\sum_{k=1}^n\left(\frac{\tau^{-1}}{z_k-y_j}\partial_{z_k}
+\frac{2b_k}{y_j-z_k}\partial_{y_j}
\right)
\end{align*}
which are (up to multiplicative constant) the generators of a system of $\SLE$s satisfying local commutation in the sense of \cite{Dub_Comm}.

Setting $Z=fZ_0$, we are now interested in homogeneous, translation invariant solutions of
\begin{equation}\label{eq:commmart}
{\mc L}^if={\mc M}^jf=0
\end{equation}
for $i=1,\dots,r$, $j=1,\dots,s$.

If $\hat{\mc T}$ is the extended Teichm\"uller space of simply-connected domains with $r+s+n+1$ marked boundary points (with formal local coordinates marked at the $r+s$ seeds $X_1,\dots,Y_s$ and 1-jets marked at the $n+1$ spectator points), one can define the function ${\mc Z}$ s.t. 
$${\mc Z}(\H,x_1,\dots,z_n,\infty)=Z(x_1,\dots,z_n)$$
where on the LHS the formal local coordinates and jets are given by the standard local coordinates in $\H$ (viz. $z-x$ at $x\in\R$ and $-z^{-1}$ at infinity). Then (see e.g. \cite{Dub_Virloc}, Section 4.4) we have
$$\Delta_{2,1}^{X_i}({\mc Z}s_\zeta^c)=\Delta_{1,2}^{Y_j}({\mc Z}s_\zeta^c)=0$$
where $s_\zeta^c$ is again the reference section of the determinant bundle ${\mc L}^{\otimes c}$.

Assume that $f$ satisfies \eqref{eq:commmart} and is bounded as $x_1,\dots,y_s\rightarrow 0$, while $z_1,\dots,z_n$ stay bounded and bounded away from each other and $0$. Assume that 
$$g(z_1,\dots,z_n)=\lim_{y_s\rightarrow 0}\cdots\lim_{x_1\rightarrow 0}f(x_1,\dots,y_s,z_1,\dots,z_n)$$
is well-defined and smooth (we make this additional assumption since the condition $r'\tau-s'>0$ of Theorem \ref{Thm:fus} is not satisfied for all fusions here). Then by repeated applications of Theorem \ref{Thm:fus} (under the assumption $\tau\notin\Q$), we obtain that
$g$ satisfies the BPZ equation
$$D_{r+1,s+1}(g{\overline Z}_0)=0$$
where $D_{r+1,s+1}$ is obtained from the singular vector $\Delta_{r+1,s+1}\in{\mc U}(\Vir^-)$ by substituting 
$$\ell_n^0=\sum_{k=1}^n(-z_k^{n+1}\partial_{z_k}-h_k(n+1)z_k^{n})$$
for $L_n$, and ${\overline Z}_0$ is the leading coefficient of $Z_0$ under fusion, viz.
$${\overline Z}_0=\prod_k z_k^{(s-r\tau)b_k}\prod_{k<k'}(z_{k'}-z_k)^{2\tau b_kb_{k'}}$$
Notice that the statement is not immediate even in the case $f\equiv 1$, $g\equiv 1$. 
 
At this stage the difficulty is that it is not clear how to construct such an $f$ for general $r,s$ or how to interpret it in terms of systems of $\SLE$s

{\bf Acknowledgments}. It is my pleasure to thank Denis Bernard, Cl\'ement Hongler and Fredrik Johansson Viklund for interesting conversations during the preparation of this article. I also thank anonymous referees for many useful comments.

\bibliographystyle{abbrv}
\bibliography{biblio}

-----------------------

\noindent Columbia University\\
Department of Mathematics\\
2990 Broadway\\
New York, NY 10027

\end{document}